\documentclass[11pt]{article}
\usepackage{amsfonts,amsmath,geometry,xcolor}
\usepackage{amsthm}

\usepackage{txfonts}
\usepackage{amssymb}
\usepackage{hyperref}

\hypersetup{
colorlinks=true, linkcolor=blue, anchorcolor=blue, citecolor=blue}
\allowdisplaybreaks[4]

\newtheorem{theorem}{Theorem}[section]
\newtheorem{lemma}[theorem]{Lemma}

\newtheorem{corollary}[theorem]{Corollary}
\newtheorem{definition}[theorem]{Definition}
\newtheorem{remark}[theorem]{Remark}

\begin{document}

\thispagestyle{empty}

\title{
Partial regularity for local minimizers of functionals with double-phase Orlicz growth}
\author{
Wenrui Chang
\footnote{
School of Mathematics and Statistics,
Beijing Jiaotong University,
Beijing 100044, China.
Email:wenruichang@bjtu.edu.cn}
\quad
Shenzhou Zheng
\footnote{
School of Mathematics and Statistics,
Beijing Jiaotong University,
Beijing 100044, China.
Email:shzhzheng@bjtu.edu.cn}}
\maketitle

\begin{abstract}
In this paper, we consider the local minimizers to a class of non-autonomous functional with double-phase Orlicz growth:
\begin{equation*}
u \in W^{1,1}(\Omega; \mathbb{R}^N) \mapsto  \int_\Omega \Big( {\varphi_1}\big(|Du|_{\mathbb{A}^u}\big) + a(x)\,{\varphi_2} \big(|Du|_{\mathbb{A}^u}\big) \Big)\, dx,
\end{equation*}
where $|Du|_{\mathbb{A}^u} := \big\langle \mathbb{A}(x,u)\,Du, Du \big\rangle^{\frac{1}{2}}$ with $\mathbb{A}(x,u) = \big\{A^{\alpha \beta}_{ij}(x, u)\big\}_{i,j = 1\cdots N}^{\alpha,\beta = 1\cdots n}$ as the uniformly elliptic bounded symmetric tensor field, and $\varphi_k(\cdot)$ for $k=1,2$ are two different $N$-functions.
We prove the partial regularity of their local minimizers if the continuity modulus of $a(\cdot)$ and $\varphi_k(\cdot)$ are fit for the gap conditions, and $\mathbb{A}(x,u)$ meets an optimal regularity in $(x,u)$.
This is an extension from the single Orlicz growth functional to the double-phase one based on an essential improvement of several important inequalities.

\vskip 0.2cm
\noindent{\bf MR(2020) Subject Classification:}
{Primary: 35B65; 35J15; Secondary: 49N60}
\vskip 0.2cm
\noindent{\bf Keywords:}
{Partial regularity;
Double-phase problem;
Orlicz growth;
Excess energy;
Non-autonomous functionals}
\end{abstract}

\section{Introduction}

Let $u: \Omega \to \mathbb{R}^N$  be a vector-valued function with bounded open subset $\Omega \subset \mathbb{R}^n$. We devote this paper to the partial regularity of local minimizers to a class of non-autonomous functional with double-phase Orlicz growth as follows:
\begin{equation}\label{main_funl}
\mathcal{F}_{\mathbb{A},{\varphi_1},{\varphi_2}}(u ; \Omega): = \int_\Omega \Big( {\varphi_1}\big(|Du|_{\mathbb{A}^u}\big) + a(x)\,{\varphi_2} \big(|Du|_{\mathbb{A}^u}\big) \Big)\, dx,
\end{equation}
where $a: \Omega \to [0,+\infty)$ is of bounded function, $|Du|_{\mathbb{A}^u} := \big\langle \mathbb{A}(x,u)\,Du, Du \big\rangle^{\frac{1}{2}}$ which the tensor field $\mathbb{A}(x,u) = \big\{A_{i j}^{\alpha \beta}(x, u)\big\}_{i,j = 1\cdots N}^{\alpha,\beta = 1\cdots n}$ satisfies the symmetry $A_{i j}^{\alpha \beta}(x, u)=A_{j i}^{\beta \alpha}(x, u)$ and there exists a  constant $\Lambda \geq 1$ such that
\begin{equation}\label{cond_A}
\Lambda^{-1}|\xi|^2
\leq	\sum_{i,j =1}^{N} \sum_{\alpha,\beta  =1}^{n}A_{i j}^{\alpha \beta}(x, u)\, \xi_\alpha^i \xi_\beta^j
\leq	\Lambda|\xi|^2 \qquad  \forall (x,u,\xi) \in \Omega\times\mathbb{R}^{N}\times\mathbb{R}^{N \times n}.
\end{equation}
The functions ${\varphi_k}\in C^1[0,+\infty) \cap C^2(0,+\infty)$ for $k=1,2$ are two $N$-functions
satisfying
\begin{equation}\label{cond_p_q}
0 < p - 1
\leq	\frac{t\varphi^{\prime\prime}_k(t)}{\varphi^{\prime}_k(t)}
\leq	q - 1 < \infty \qquad  \forall t > 0,
\end{equation}
where $p,q$ are two constants. We also assume that ${\varphi_2} \circ \varphi_1^{-1}(\cdot)$ still is $N$-function, and denote this as ${\varphi_1} \prec {\varphi_2}$.

As is well known, the Morrey and Campanato regularity estimates for higher-order gradient to local vector-valued minimizers of the quadratic functional
$\int_\Omega \big\langle \mathbb{A}(x)\,Du, Du \big\rangle\, dx $
can be derived while the coefficient tensor $\mathbb{A}(x)$ is sufficiently smooth with uniform ellipticity and boundedness, see \cite[Chapter III]{Gi83}. De Giorgi in 1968 presented a counterexample to show that there exists an unbounded vector-valued minimizer for the quadratic uniform elliptic variational integral with its coefficient $A(x)\in L^{\infty}$, it yields that we could not generally expect \emph{everywhere regularity} for the vector-valued minimizers of such functionals. As a modification of De Giorgi's example, Hildebrandt-Widman \cite{HiW75} constructed a counterexample to show that there exists a discontinuous vector-valued minimizer for  the functional
$ u \mapsto \int_\Omega a\big(|u|\big)\, |Du|^2\, dx $
with smooth functions $a(\cdot)$.
To address this issue, Morrey \cite{Mo67} introduced the concept of the  so-called \emph{partial regularity} for minimizers/solutions which indicates their regularity results of minimizers/solutions being outside a negligible set. From then on, the partial regularity for the vectorial minimizers of functionals or the solutions of partial differential systems has been popularly studied. Generally speaking, the partial regularity asserts  pointwise regularity for minimizers/solutions, in an open subset whose complement is negligible.
Regarding our topic, Fusco-Hutchinson \cite{FuH89} explored the partial $C_\mathrm{loc}^{1,\gamma}$-regularity of minimizers for functionals of the form
\begin{equation}\label{A(x,u)-p}
\mathcal{F}_{\mathbb{A},p}(u ; \Omega) := \int_\Omega |Du|_{\mathbb{A}^u}^p\, dx
\end{equation}
for $p \geq 2$, while  Acerbi-Fusco \cite{AcF89} extended this result to that for $1 < p < 2$. Here, we would like to remark that Uhlenbeck \cite{Uh77} proved the full $C_\mathrm{loc}^{1,\gamma}$-regularity for vector-valued minimizers of functionals of the $p$-growth form
\begin{equation*}
\mathcal{F}_{p}(u ; \Omega): =\int_\Omega |Du|^p\, dx
\end{equation*}
for $p \geq 2$, and subsequently Tolksdorf \cite{To83} extended the Uhlenbeck result to that for $1<p<2$. Later, Di~Gironimo-Esposito-Sgambati in \cite{DiES04} proved the partial Morrey $L^{2,\lambda}$-regularity of gradients to the local minimizers of the following quadratic functional
\begin{equation*}
\mathcal{F}_{\mathbb{A},2}(u ; \Omega) :=\int_{\Omega} |Du|_{\mathbb{A}^u}^2\,dx,
\end{equation*}
while $\mathbb{A}(x, u)$ is only assumed to be vanishing mean oscillation (VMO) in the $x$-variable and uniformly continuous in the $u$-variable.

Nowadays, several classes of integral functionals or nonlinear elliptic equations concerning non-standard growth have been popular issues, such as $(p,q)$-growth, $p(x)$-growth, Orlicz growth, multi-phase growth, etc. These problems have attracted considerable attention due to their relevance in nonlinear elasticity, electrorheological fluids, and image processing. In particular, the $(p,q)$-growth problem explains the mathematical model for composite materials composed of two different power hardening exponents. Let us now review some progresses regarding elliptic problems with double-phase growths. Marcellini was the first to get the Lipschitz regularity of local minimizers of the double-phase energy functional in \cite{Ma89}, and also proved the corresponding regularity for the double-phase elliptic equations in \cite{Ma91}.
Since then, the double-phase elliptic problem has been an active field that has attracted the attention of researchers over the past decade. Esposito-Leonetti-Mingione \cite{EsLM04} proved the higher gradient integrability of local minimizers to the energy functionals $\int_\Omega f(x,Du) dx$ with $(p,q)$-growth while the integrand function $f(x, Du)$ is $\alpha$-H\"older continuous in $x$ under the gap condition $1\le \frac qp < \frac{n+\alpha}n$. With the $(p, q)$-growth functionals modeled by
\begin{equation}\label{double}
\mathcal{F}_{p,q}(u ; \Omega):  =\int_\Omega\Big(|Du|^{p} + a(x)\,|Du|^{q}\Big)\, dx,	
\end{equation}
Colombo-Mingione \cite{CoM15} showed the $C_\mathrm{loc}^{1,\gamma}$-regularity to the local minimizers $u\in W^{1,p}(\Omega)$ of functional $\mathcal{F}_{p,q}(u ; \Omega)$, under assumptions of
\begin{equation}\label{double_gap}
a(x) \in C^{0, \alpha}(\Omega)\quad \text{for} \ \alpha \in (0,1], \quad \text{and} \ \ \frac{q}{p} < 1 + \frac{\alpha}{n};
\end{equation}
and they \cite{CoM15-1} further got the $C_\mathrm{loc}^{1,\gamma}$-regularity for the bounded minimizers of  functional \eqref{double} by weakening the gap assumption to $q\le p+\alpha$ while $a(x)\in C^{\alpha}$. It would be also remarked that Eleuteri-Marcellini-Mascolo \cite{ElMM16} gave the Lipschitz regularity of local
 minimizers to the functional $\int_\Omega g(x,|Du|) dx$ with $(p, q)$-growth just for $|Du|\rightarrow\infty $ without further structure conditions imposed on the integrand.
Recently, De~Filippis-Mingione \cite{DeM23} established the local Schauder theory of distributional solutions $u\in W^{1,1}(\Omega)$ to the nonuniform elliptic problems under the different gap conditions; while  they \cite{DeM25} further established the Schauder estimates for the nonuniform elliptic problems under the optimal assumptions on growth of elliptic ratio.

On the other hand, there are recently a lot of research of partial regularity for the vector-valued minimizers of double-phase elliptic functionals or nonlinear elliptic systems with double-phase growths. For instances, Coscia-Mingione \cite{CoM99} proved $C^{1,\gamma}$-regularity of the local vector-valued minimizers to the $p(x)$-growth functionals
$\mathcal{F}_{p(x)}(u ; \Omega) := \int_\Omega |Du|^{p(x)}\, dx$ while $p(x) > 1$ is H\"older continuous. Bildhauer-Fuchs \cite{BiF05} considered the $(p,q)$-growth functionals with non-autonomous form,
and gave the partial regularity of the local minimizers of non-autonomous functionals based on the so-called blow-up approach. We would particularly like to mention that Ragusa-Tachikawa-Takabayashi \cite{RaTT13} explored the partial H\"older continuity of gradients to the local minimizers of the $p(x)$-growth functional as follows:
\begin{equation*}
\mathcal{F}_{\mathbb{A},p(x)}(u ; \Omega) := \int_\Omega |Du|_{\mathbb{A}^u}^{p(x)}\, dx,
\end{equation*}
while $p(x)\geq 2$ is sufficiently smooth and $\mathbb{A}(x,u)$ is H\"older continuous in $x$ and smooth in $u$. For more literature regarding the $p(x)$-growth, we can refer to \cite{Ta14, GoRS20, GoS22} and the references therein.
We would particularly like to point out that Diening-Stroffolini-Verde \cite{DiSV09} proved everywhere $C_\mathrm{loc}^{1,\gamma}$-regularity of the local minimizers to the Orlicz-growth functional $\mathcal{F}_{\varphi}(u ; \Omega) := \int_\Omega {\varphi}\big(|Du|\big)\, dx$  with the \emph{Uhlenbeck structure} while ${\varphi}(\cdot)$ is an $N$-function. Giannetti-Passarelli di Napoli-Tachikawa \cite{GiPT17} gave the partial regularity of minimizers to the following non-autonomous integral functional
\begin{equation*}
\mathcal{F}_{\mathbb{A},{\varphi}}(u ; \Omega) := \int_\Omega {\varphi}\big(|Du|_{\mathbb{A}^u}\big)\, dx,
\end{equation*}
 under suitable regularity on the boundary datum provided that $\mathbb{A}(x,u)$ is a uniformly elliptic, bounded and continuous function. For more functionals with Orlicz-growths, we can refer to \cite{ByO20, GoSS22, Ir25}. In addition, Ok \cite{Ok18p} proved the partial H\"older regularity to such elliptic systems with $ (p,q) $-growth or Orlicz growth based on the \emph{$ \mathcal{A} $-harmonic approximation}, while Tachikawa \cite{Ta24} proposed the partial regularity to the double-phase functionals with variable exponents.
 We refer the readers to \cite{De22, GmK24, OkSS25} for more $(p,q)$-growth functionals. As borderline double-phase functionals with the model as follows:
\begin{equation}\label{p-plog}
\mathcal{F}_{\log}(u ; \Omega) := \int_\Omega \Big( |Du|^p + a(x) |Du|^p \log \big(e + |Du|\big) \Big) \, dx,
\end{equation}
one can weaken the modulus of continuity $\omega_a(\cdot)$ on $a(\cdot)$ rather than its H\"older continuity, since $a(\cdot)$ is used to calibrate the size of the transition between $|Du|^p$ and $|Du|^p \log \big(e + |Du|\big)$. Baroni-Colombo-Mingione \cite{BaCM16} proved that every local minimizer of the functional \eqref{p-plog} achieves local H\"older continuity under assumption of
\begin{equation}\label{p-plog-gap}
\limsup_{r \to 0^+}  \omega_a(r)\,\log\Big(\frac{1}{r}\Big) < \infty,
\end{equation}
and its gradient is locally H\"older continuous when $\omega_a(r) = r^{\,\beta}$ with $\beta > 0$. Also, Ragusa-Tachikawa \cite{RaT24} gave partial regularity for borderline double-phase functionals with variable exponent growths.

Motivated by the aforementioned progresses, this article is committed to the partial regularity of such functional \eqref{main_funl} with double-phase Orlicz growth. In fact, we are in part inspired by the following recent developments. Byun-Oh in \cite{ByO20} showed the H\"older continuity of minimizers, based on  the De Giorgi iteration, to the scalar functionals with the model as
\begin{equation}\label{2_orlicz}
\mathcal{F}_{{\varphi_1},{\varphi_2}}(u ; \Omega):  =\int_\Omega\Big({\varphi_1}\big(|Du|\big) + a(x)\,{\varphi_2} \big(|Du|\big)\Big)\, dx
\end{equation}
under the gap condition
\begin{equation}\label{2_orlicz_gap}
\limsup_{r \to 0^+}\omega_a(r)\, \frac{({\varphi_2} \circ \varphi_1^{-1})(r^{-n})}{r^{-n}} < \infty,
\end{equation}
while Baasandorj-Byun \cite{BaB25}  provided  a unified method to show comprehensive regularity results of the functionals involving Orlicz multi-phase under some optimal conditions. On the other hand, Harjulehto-H\"ast\"o-Toivanen \cite{HaHT17} gave the Harnack inequality and local H\"older continuity of quasiminimizers of the functionals under general growth conditions as
\begin{equation}\label{g_orlicz}
\mathcal{F}_{\varPhi}(u ; \Omega) := \int_\Omega \varPhi(x, |Du|)\, dx,
\end{equation}
which includes the special settings of standard, variable exponent and double phase growth by unifying the gap condition instead of \eqref{double_gap}, \eqref{p-plog-gap} and \eqref{2_orlicz_gap}. Moreover, H\"ast\"o-Ok \cite{HaO22a} proved the local $C^{1,\alpha}$-regularity theory of local minimizers to the above functional in a universal way, while in \cite{HaO22b} they
derived the maximal local regularity results to the non-autonomous partial differential problems without recourse to special function
structure and without assuming Uhlenbeck structure. Later, they in \cite{HaO23} further developed the regularity theory of such non-autonomous problem under the a priori assumptions.

Our problem under consideration also includes both the functionals $\mathcal{F}_{p,q}(u ; \Omega)$ in \eqref{double} and $\mathcal{F}_{\log}(u ; \Omega)$ in \eqref{p-plog} as special settings. We also notice that Tachikawa \cite{Ta24} proved the partial H\"older continuity of gradient to the local minimizers of double-phase functional with variable exponent growth
\begin{equation*}
\mathcal{F}_{\mathbb{A},(p(x),q(x))}(u ; \Omega) := \int_\Omega \Big( |Du|_{\mathbb{A}^u}^{p(x)} + a(x)\, |Du|_{\mathbb{A}^u}^{q(x)} \Big)\, dx,
\end{equation*}
under assumptions that $ a(\cdot), p(\cdot), q(\cdot) \in C^{0,\,\beta}(\Omega)$, $\mathbb{A}(\cdot, \cdot)\in C^{0,\alpha}(\Omega, \mathbb{R}^{Nn \times Nn})$ and the gap condition
\begin{equation*}
1 \leq \sup_{x \in \Omega}\frac{q(x)}{p(x)} < 1 + \frac{\alpha}{n}.
\end{equation*}
Unfortunately, the model under our consideration does not include such functional like $\mathcal{F}_{\mathbb{A},(p(x),q(x))}$, since the $N$-function $\varphi(|\cdot|)$ can not describe variable exponent growth of the norm with $|\cdot|^{p(x)}$.
We now recall the definition of the local minimizer to functional $\mathcal{F}_{\mathbb{A},{\varphi_1},{\varphi_2}}(u ; \Omega)$ in the weak sense.

\begin{definition}\label{def-min}
The function $u \in W^{1,1}(\Omega ;{\mathbb{R}^N})$ is said to be a local minimizer of the functional \eqref{main_funl}, if $\mathcal{F}_{\mathbb{A},{\varphi_1},{\varphi_2}}(u ; \Omega) < \infty$ and for any $\Omega' \Subset \Omega$ there holds
\begin{equation*}
\mathcal{F}_{\mathbb{A},{\varphi_1},{\varphi_2}}(u ; \Omega')
\leq	\mathcal{F}_{\mathbb{A},{\varphi_1},{\varphi_2}}(u + \phi; \Omega')\qquad  \forall \phi \in W_0^{1,1}(\Omega' ;{\mathbb{R}^N}).
\end{equation*}
\end{definition}

The goal of this paper is to investigate the partial regularity of such functional \eqref{main_funl} with double-phase Orlicz growth  in the vectorial setting. It should be noted that the term $|Du|_{\mathbb{A}^u}$ in the functional \eqref{main_funl} depends explicitly on $(x,u)$ as well as on $Du$. To this end, it is necessary to impose regularity assumptions on $\mathbb{A}(\cdot,\cdot)$ and $a(\cdot)$: namely, that there exists a positive constant $L \geq 1$ and two non-decreasing concave moduli of continuity $0\le \omega_{\mathbb{A}}(\cdot), \omega_a(\cdot)\le 1$ with $\omega_{\mathbb{A}}(0)=\omega_a(0)=0$ such that for $\forall x,y \in \Omega; u,v \in \mathbb{R}^N$ one has
\begin{eqnarray}\label{cond_continuous}
  \big| \mathbb{A}(x,u) - \mathbb{A}(y,v) \big| \leq L\, \omega_{\mathbb{A}}\big(|x-y|+|u-v|\big),\qquad
  \big|a(x) - a(y)\big| \leq 	L\,\omega_a\big(|x-y|\big).
\end{eqnarray}

%
With the above notations in hand, we are now to state our main result as follows.

\begin{theorem}\label{thm_1}
Let ${\varphi_1}, {\varphi_2} \in C^1[0,+\infty) \cap C^2(0,+\infty)$ satisfy \eqref{cond_p_q} with ${\varphi_1} \prec {\varphi_2}$, and $u \in W^{1,1}(\Omega,\mathbb{R}^N)$ be a local minimizer of the functional \eqref{main_funl}. Assume that $\mathbb{A}(\cdot,\cdot)$  with \eqref{cond_A} and $a(\cdot)$ are uniformly continuous with the modulus of continuity \eqref{cond_continuous} satisfying the gap condition
\begin{equation}\label{gap_1}
\limsup_{r \to 0^+}\,\omega_a(r)\, \frac{\big({\varphi_2} \circ \varphi_1^{-1}\big)\left(r^{-n}\right)}{r^{-n}} =0.
\end{equation}
Then we have $u \in C_\mathrm{loc}^{0,\alpha}(\Omega_0, \mathbb{R}^N)$ for any $\alpha \in (0,1)$, where the regular set $\Omega_0 \subset \Omega$ is defined by
\begin{equation}\label{def_Omega_0}
\Omega_0:=\left\{ y \in \Omega:
\liminf _{r \rightarrow 0^{+}} \fint_{B_r(y)} |u - \langle u \rangle_{B_r(y)}| \,dx  <  \delta_0
\right\},
\end{equation}
with the small constant $\delta_0 > 0 $. Moreover, there exists a  small
$\epsilon   \in (0, n-p]$ such that $\mathcal{H}^{n - p-\epsilon}\big(\Omega \setminus \Omega_0\big) = 0$,
with $\mathcal{H}^{s}(\mathcal{D})$ as the $s$-dimensional Hausdorff measure of $\mathcal{D}$.
\end{theorem}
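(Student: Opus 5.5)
We will follow the classical scheme of Fusco--Hutchinson, Giannetti--Passarelli di Napoli--Tachikawa and Tachikawa for functionals of the type $\langle \mathbb{A}(x,u)Du,Du\rangle$: freeze the coefficients and compare with minimizers of a frozen functional, derive a decay estimate for an excess, and iterate. Throughout we write $H(x,t):=\varphi_1(t)+a(x)\varphi_2(t)$, and we set $H^-_{B}(t):=\varphi_1(t)+a^-_B\varphi_2(t)$ with $a^-_B:=\inf_B a$.

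\textbf{Step 1: Caccioppoli and higher integrability.} Testing minimality with $\phi=\eta^{q}(u-\lambda)$ and using \eqref{cond_A} and \eqref{cond_p_q}, we obtain a Caccioppoli inequality
\[
\fint_{B_{r/2}}H(x,|Du|)\,dx\le c\fint_{B_r}H\Big(x,\frac{|u-\lambda|}{r}\Big)dx .
\]
A Sobolev--Poincar\'e inequality in the double-phase Orlicz setting then gives a reverse H\"older inequality, and Gehring's lemma gives $H(\cdot,|Du|)\in L^{1+\sigma}_{\rm loc}$.

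The gap condition \eqref{gap_1} enters exactly here. On small balls where $u$ has small mean oscillation we need
\[
a(x)\varphi_2(t)\le c\,\big(a^-_B+\omega_a(r)\big)\varphi_2(t),
\qquad
\omega_a(r)\varphi_2(t)\lesssim \varphi_1(t)\quad\text{whenever } \varphi_1(t)\lesssim r^{-n}.
\]
This "phase comparison" allows us to replace $H$ by $H^-_B$ up to an error $o(1)$ as $r\to0$.

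\textbf{Step 2: Comparison and decay.} On $B_r(y)$ we freeze both the coefficient and the weight, taking $\mathbb{A}_0=\mathbb{A}(y,\langle u\rangle_{B_r})$ and $a^-_B$. Let $v$ minimize $\int H^-_B(|Dv|_{\mathbb{A}_0})$ with $v=u$ on $\partial B_r$. Since $\mathbb{A}_0$ is constant, a linear change of variables reduces this to an Uhlenbeck-type Orlicz functional. The theory of Diening--Stroffolini--Verde then gives $v\in C^{1}$, together with the Campanato decay
\[
\fint_{B_\rho}H^-_B\Big(\frac{|v-\langle v\rangle_{B_\rho}|}{\rho}\Big)\le c\Big(\frac{\rho}{r}\Big)^{\gamma}\fint_{B_r}H^-_B\Big(\frac{|v-\langle v\rangle_{B_r}|}{r}\Big).
\]
Next we use the minimality of $u$, the modulus $\omega_{\mathbb{A}}(r+|u-\langle u\rangle_{B_r}|)$, Step 1 and the $V$-function (shifted $N$-function) monotonicity. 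These give the comparison estimate
\[
\fint_{B_r}\varphi_{|Dv|}(|Du-Dv|)\le c\big[\omega_{\mathbb{A}}(r+E)^{\sigma'}+\omega_a(r)\,\tfrac{\varphi_2\circ\varphi_1^{-1}(r^{-n})}{r^{-n}}\big]\fint_{B_r}H(x,|Du|)+\dots .
\]
Here $E=\fint_{B_r}|u-\langle u\rangle|$, and the $\omega_{\mathbb{A}}$ term uses Jensen with the concave modulus.

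\textbf{Step 3: Iteration and Hausdorff estimate.} Combining Steps 1 and 2 yields, for the excess $\Phi(y,r):=\fint_{B_r}H\big(x,|u-\langle u\rangle_{B_r}|/r\big)$ (or simply $E$), an inequality of the form
\[
\Phi(\tau r)\le c\big(\tau^{\gamma}+\tau^{-n}\varepsilon(r,E)\big)\Phi(r).
\]
The quantity $\varepsilon(r,E)$ is small when $r$ and $E$ are small. Choosing $\tau$ and then $\delta_0$, we propagate smallness to all nearby centers and all smaller radii. This gives $r^{p}\fint_{B_r}\varphi_1(|Du|)\lesssim r^{n-p+p\alpha}$-type Morrey decay of $Du$ for every $\alpha<1$, and hence $u\in C^{0,\alpha}$ by Morrey/Campanato.

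For the singular set, Poincar\'e gives $\fint_{B_r}|u-\langle u\rangle|\le c\,r\big(\fint\varphi_1(|Du|)\big)$, controlled through $\varphi_1^{-1}$. Hence $\Omega\setminus\Omega_0\subset\{y:\limsup_r r^{p-n}\int_{B_r}\varphi_1(|Du|)>0\}$, and by the standard Giusti lemma this set has $\mathcal{H}^{n-p}$-measure zero. The higher integrability of Step 1, $|Du|^{p(1+\sigma)}\in L^1$, upgrades this to $\mathcal{H}^{n-p-\epsilon}(\Omega\setminus\Omega_0)=0$ for some small $\epsilon>0$ (when $p<n$; otherwise the statement is trivial).

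\textbf{Main obstacle.} The hardest step is Step 2's comparison estimate. It must handle simultaneously the $u$-dependence of $\mathbb{A}$ (only uniformly continuous, with no rate), the non-homogeneous Orlicz structure (no scaling), and the switch from $a(x)$ to $a^-_B$. The plan there is to work with shifted $N$-functions and the inequality $\varphi_2(t)\le \frac{\varphi_2\circ\varphi_1^{-1}(r^{-n})}{r^{-n}}\varphi_1(t)$ for $\varphi_1(t)\le r^{-n}$. We would ensure $\varphi_1(|Du|)\lesssim r^{-n}$ in average via the finiteness of the energy, and use higher integrability to absorb the remaining part.
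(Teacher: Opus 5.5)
Your Steps 1 and 3 and the Hausdorff estimate follow the paper closely. The shared ingredients are the hole-filling Caccioppoli inequality, the double-phase Sobolev--Poincar\'e inequality, Gehring's lemma, and the phase comparison through $\mathcal{G}(r)$. They also include propagating smallness of the $L^1$-excess $\mathcal{E}$ via Poincar\'e and \eqref{cacc_L_1}, and Giusti's lemma applied to a higher power of the energy density.

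\textbf{The gap is in Step 2.} If you compare by inserting the frozen minimizer $v$ into the minimality of $u$, you must bound
\[
\fint_{B_r}\big(a(x)-\bar a_r\big)\,\varphi_2\big(|Dv|_{\mathbb{A}(x,v)}\big)\,dx\ \lesssim\ \omega_a(r)\fint_{B_r}\varphi_2\big(|Dv|\big)\,dx .
\]
There is also an $\omega_{\mathbb{A}}(r+|v-\langle u\rangle_{B_r}|)$-term weighted by $\varPsi(x,|Dv|)$. Both require the $\varphi_2$-energy of $v$, not of $u$, and nothing you list controls it:
\begin{itemize}
\item the frozen energy bounds only $\varphi_1+\bar a_r\varphi_2$, which gives nothing if $\bar a_r=0$;
\item the Lipschitz bound for $v$ is interior only;
\item boundary Gehring for $v$ gives at best $\bar\varPsi_r(|Dv|)\in L^{1+\sigma}$ with a small $\sigma$.
\end{itemize}
After splitting at $\varphi_1(|Dv|)=r^{-n}$, the low part is handled by the gap condition. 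In the model case $\varphi_1=t^p$, $\varphi_2=t^q$, however, the high part is controlled by $\int\varphi_1(|Dv|)^{1+\sigma}$ only if $q\le p(1+\sigma)$. By contrast, \eqref{gap_1} with $\omega_a(r)=r^\beta$ allows every $q<p(1+\beta/n)$. Moreover, an averaged bound $\fint\varphi_1(|Du|)\lesssim r^{-n}$ cannot feed the pointwise inequality $\varphi_2(t)\le\frac{\varphi_2\circ\varphi_1^{-1}(r^{-n})}{r^{-n}}\varphi_1(t)$. So $\mathcal{F}(u;B_r)\le\mathcal{F}(v;B_r)$ carries no quantitative information, since its right side need not be finite (a Lavrentiev-type obstruction). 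The ``$+\dots$'' in your comparison estimate hides exactly this uncontrolled term.

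\textbf{The missing idea is never to test minimality with $v$.} The paper proves (Lemma \ref{lem_comp}, and Lemma \ref{lem_comp_2} in the borderline case) that $u$ is an almost-minimizer of the frozen functional against Lipschitz perturbations only. For $w=u+\eta$ with $\eta\in W^{1,\infty}_0(B_{r/8})$, the set $\{|Dw|>\varphi_1^{-1}(r^{-n})\}$ contributes at most $c\,r^{n\varepsilon}\fint\big(\varPsi(x,|Du|)^{1+\varepsilon}+\varPsi(x,|D\eta|)^{1+\varepsilon}\big)$. This is harmless because the higher integrability of $u$ holds for the full $\varPsi$ and $D\eta$ is bounded. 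The $u$-dependence of $\mathbb{A}$ is handled through $\|\eta\|_\infty\le r\|D\eta\|_\infty$ and \eqref{Epsi}. The $V$-function comparison with $v$ then comes from the harmonic approximation Lemma \ref{lem_app}. That lemma applies because $\zeta\mapsto\bar\varPsi_r(|\zeta|_{\bar{\mathbb{A}}_r^u})$ has quasi-isotropic $(p,q)$-growth (Lemma \ref{rem_app}).

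\textbf{Two smaller corrections.}
\begin{itemize}
\item A linear change of variables only produces maps $\xi\mapsto P\xi Q$. It therefore cannot turn $|\xi|_{\mathbb{A}_0}$ into $|\xi|$ for a general tensor $\mathbb{A}_0$. The regularity of $v$ must come from the theory for $\phi(|\xi|_{\bar{\mathbb{A}}})$ (Lemma \ref{est_v}), not from the Uhlenbeck case.
\item The normalized excess $\fint_{B_\rho}\bar\varPsi_r\big(|v-\langle v\rangle_{B_\rho}|/\rho\big)$ does not decay like $(\rho/r)^\gamma$; it is only bounded, via $\sup|Dv|$. The decay that drives the iteration is that of $\int_{B_\rho}|Du|$ (equivalently of $\mathcal{E}$), as in Lemma \ref{lem_morrey}.
\end{itemize}
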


It would be particularly worth noting that the gap condition \eqref{gap_1} does not capture the endpoint of gap. For example, it excludes the possibility of  $q/p = 1 + \beta/n$ while we consider the usual double-phase energy functional for $\omega_a(t) = t^{\,\beta}$, ${\varphi_1}(t) = t^p$ and ${\varphi_2}(t) = t^q$. In fact, Baroni-Colombo-Mingione \cite{BaM18} proved that sharp regularity results can also be achieved for the double-phase functional $\mathcal{F}_{p,q}$ while allowing the full range $q/p \leq 1 + \alpha/n $.
To complete our result, we are devoted to presenting the second result to describe an optimal H{\"o}lder regularity for this borderline setting.

\begin{theorem}\label{thm_2}
Let ${\varphi_1}, {\varphi_2} \in C^1[0,+\infty) \cap C^2(0,+\infty)$ satisfy \eqref{cond_p_q} with ${\varphi_1} \prec {\varphi_2}$, and $u \in W^{1,1}(\Omega,\mathbb{R}^N)$ be a local minimizer of the functional \eqref{main_funl}. Assume that $\mathbb{A}(\cdot,\cdot)$ with \eqref{cond_A} is uniformly continuous, and $a(\cdot)$ is H\"older continuous with the modulus of continuity \eqref{cond_continuous} satisfying the gap condition
\begin{equation}\label{gap_2}
\limsup_{r \to 0^+}\,\omega_a(r)\, \frac{\big({\varphi_2} \circ \varphi_1^{-1}\big)\left(r^{-n}\right)}{r^{-n}} < \infty.
\end{equation}
Then $u \in C_\mathrm{loc}^{0,\alpha}(\Omega_0, \mathbb{R}^N)$ for any $\alpha \in (0,1)$ with $\Omega_0 \subset \Omega$ defined as \eqref{def_Omega_0}.
\end{theorem}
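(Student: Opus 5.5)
The approach is the same comparison and excess-decay scheme that proves Theorem~\ref{thm_1}, in the spirit of Giannetti--Passarelli di Napoli--Tachikawa \cite{GiPT17} and Tachikawa \cite{Ta24}. It combines a freezing argument with a Caccioppoli inequality and a higher integrability (Gehring) estimate that has a uniform exponent. The only change is in how the $a(x)$-oscillation term is handled: under \eqref{gap_2} it is no longer small, only bounded. The Hölder continuity of $a$ is what lets us recover the decay. Fix $y\in\Omega_0$ and a small radius $r$ with $\fint_{B_r(y)}|u-\langle u\rangle_{B_r(y)}|\,dx<\delta_0$. By the Caccioppoli inequality for the functional \eqref{main_funl}, the energy $\fint_{B_{r/2}} \big(\varphi_1(|Du|)+a(x)\varphi_2(|Du|)\big)$ is controlled by the analogous quantity of $(u-\langle u\rangle)/r$. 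Together with higher integrability, this gives a reverse Hölder inequality.

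The comparison step freezes the coefficients at the centre, $\mathbb{A}_0=\mathbb{A}(y,\langle u\rangle_{B_r})$, and splits into two phases. In the $p$-phase, $\inf_{B_r}a\le K\,\omega_a(r)$ for a large constant $K$. There we compare $u$ with the minimizer $v$ of $\int \varphi_1(|Dv|_{\mathbb{A}_0})$. The term $a(x)\varphi_2(|Du|)$ is bounded by $\omega_a(r)\,\varphi_2(|Du|)$, and via $\varphi_1\prec\varphi_2$ and the energy bound $\fint\varphi_1(|Du|)\lesssim r^{-n}$ it is converted into a $\varphi_1$-type term. Condition \eqref{gap_2} keeps the factor $\omega_a(r)(\varphi_2\circ\varphi_1^{-1})(r^{-n})/r^{-n}$ bounded by a constant $M$. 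Higher integrability then produces an extra factor $r^{\sigma}$ on this term, since we integrate over a set whose measure is controlled. In the $(p,q)$-phase, $\inf_{B_r}a> K\omega_a(r)$, so $a(x)\simeq a(y)$ on $B_r$. We freeze to $\varphi_1+a(y)\varphi_2$, which is again an $N$-function satisfying \eqref{cond_p_q} with the same $p,q$. The comparison error then comes only from $\omega_{\mathbb{A}}(r+\text{osc}\,u)$ and from $|a(x)-a(y)|\le K^{-1}a(y)$. In both phases the frozen problem has Uhlenbeck-type structure after a linear change of variables, so Diening--Stroffolini--Verde \cite{DiSV09} gives Campanato decay for $v$:
\[
\fint_{B_{\tau r}}|v-\langle v\rangle|\lesssim \tau\fint_{B_r}|v-\langle v\rangle|.
\]
The closeness of $u$ and $v$ follows from the standard monotonicity inequality for $V$-functions,
\[
\fint |V(Du)-V(Dv)|^2\lesssim \text{error},
\]
combined with a Poincaré inequality in Orlicz form.

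The smallness parameter $\delta_0$ controls $\text{osc}\,u$ inside $\omega_{\mathbb{A}}$, through an $L^1$-to-Orlicz estimate for $u$ that uses higher integrability. Combining the decay of $v$ with the comparison estimate gives
\[
\fint_{B_{\tau r}}|u-\langle u\rangle|\le C\tau\fint_{B_r}|u-\langle u\rangle| + C\,\big(\omega_{\mathbb{A}}(\cdot)+r^{\sigma}\big)^{\theta}\,(\text{scaling terms}).
\]
Choosing $\tau$ and then $\delta_0$ and $r$ small shows the smallness propagates to $B_{\tau r}$, and this persists in a neighbourhood of $y$ by continuity of the integral. Iterating gives $\fint_{B_\rho}|u-\langle u\rangle|\lesssim \rho^{\alpha}$ for every $\alpha<1$, and Campanato's characterization yields $u\in C^{0,\alpha}_{\rm loc}(\Omega_0)$. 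Since $\Omega_0$ is open by the same argument, this proves the statement.

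The main obstacle is the $p$-phase error term. Under \eqref{gap_1} it was $o(1)$; now it is only $O(1)$, so we must extract a genuine factor $r^{\sigma}$ from the Hölder continuity $\omega_a(r)\le r^{\beta}$. The plan is to use the higher integrability exponent $1+\sigma_0$ for $\varphi_1(|Du|)$ and to interpolate the quantity $(\varphi_2\circ\varphi_1^{-1})$ between energy scales. This leaves a surplus power $r^{\beta-\beta'}$, obtained by applying \eqref{gap_2} with a slightly smaller Hölder exponent. This mirrors the Baroni--Colombo--Mingione borderline argument \cite{BaM18}. If this fails, I would instead require $\delta_0$ small enough that the $O(1)$ constant $M$ multiplies a quantity that is itself small by the excess hypothesis.
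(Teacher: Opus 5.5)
Your overall architecture (Caccioppoli and reverse H\"older, freezing, comparison, decay of the excess of $u$, Campanato's characterization) is a legitimate variant of the paper's route. The paper instead freezes once to $\varphi_1+\bar a_r\varphi_2$ with $\bar a_r=\inf_{B_r}a$, gets the comparison from almost-minimality via Lemma~\ref{lem_app}, and iterates a Morrey decay of $\int_{B_\rho}|Du|$ before using Lemma~\ref{lem_isomorphism}(iii). Your $(p,q)$-phase is also harmless, since there the error is pointwise bounded by $LK^{-1}$ times the frozen integrand.

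The gap is in the only step that distinguishes Theorem~\ref{thm_2} from Theorem~\ref{thm_1}: making the $a$-oscillation error in the $p$-phase small. With the cut-off $|Du|\le\varphi_1^{-1}(r^{-n})$ you describe, the bounded-gradient part only yields the factor $\mathcal{G}(r)=O(1)$; higher integrability helps only on the complement. The fix you propose does not work. \eqref{gap_2} for $\omega_a(r)=r^{\beta}$ does not give the same bound with $r^{\beta'}$, $\beta'<\beta$, because $r^{\beta'}>r^{\beta}$, so that is a strictly stronger hypothesis. In the model case $\varphi_1=t^p$, $\varphi_2=t^q$ it reads $q/p\le 1+\beta'/n$, which fails exactly at the endpoint $q/p=1+\beta/n$ that Theorem~\ref{thm_2} is meant to cover. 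The fallback is not an argument either: $\mathcal{G}(r)$ does not depend on $u$, and smallness of the excess enters only through $\omega_{\mathbb{A}}$.

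What is missing is the two-sided device of Lemma~\ref{lem_comp_2}. Lower the cut-off to $\varphi_1^{-1}\big(r^{-n/(1+\varepsilon)}\big)$. On the bounded set, apply \eqref{gap_2} with the same modulus at the larger radius $s=r^{1/(1+\varepsilon)}$. Since $s^{-n}=r^{-n/(1+\varepsilon)}$,
\[
\omega_a(r)\,\frac{(\varphi_2\circ\varphi_1^{-1})(s^{-n})}{s^{-n}}=\frac{\omega_a(r)}{\omega_a(s)}\,\mathcal{G}(s)=r^{\frac{\beta\varepsilon}{1+\varepsilon}}\,\mathcal{G}(s),
\]
and this is where the power form of $\omega_a$ is used.

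On the complement, the gain $r^{n\varepsilon/(1+\varepsilon)}$ from $\varphi_1(|Dw|)^{-\varepsilon}$ now exactly cancels the loss in the crude bound $\big(\fint_{B_{r/8}}\varPsi^{1+\varepsilon}(x,|Du|)\,dx\big)^{\varepsilon/(1+\varepsilon)}\lesssim r^{-n\varepsilon/(1+\varepsilon)}$. So you must use the reverse H\"older inequality with a strictly larger exponent $\varepsilon_0>\varepsilon$, which leaves the factor $r^{n\varepsilon(\varepsilon_0-\varepsilon)/((1+\varepsilon)(1+\varepsilon_0))}$. Both halves are needed, and they slot directly into your $p$-phase, where $a\le (K+L)\omega_a(r)$ and $a\varphi_2\le\varPsi$.

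Two smaller points:
\begin{enumerate}
\item A general tensor $\mathbb{A}_0$ on $\mathbb{R}^{N\times n}$ cannot be reduced to the identity by linear changes of $x$ and $u$, which only produce maps $\xi\mapsto Q\xi P$. The regularity of the frozen problem must therefore be quoted for $\int\phi(|Dv|_{\bar{\mathbb{A}}})$ directly, as in Lemma~\ref{est_v}.
\item Testing the minimality of $u$ with $v$ itself requires higher integrability of $Dv$ up to the boundary to control $\int a(x)\varphi_2(|Dv|)$ and $\mathbb{A}(x,v)$. The paper avoids this by testing only with Lipschitz $\eta$ in Lemma~\ref{lem_app}.
\end{enumerate}
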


\begin{remark}
We make the following two observations regarding Theorems~\ref{thm_1} and~\ref{thm_2}:

\begin{itemize}
\item We point out that the gap condition \eqref{gap_2} imposed on ${\varphi_1}, {\varphi_2}$ in Theorem~\ref{thm_2} is weaker than \eqref{gap_1} in Theorem~\ref{thm_1}. Here, we also emphasize that the H{\"o}lder continuity of $a(x)$ is assumed in Theorem \ref{thm_2} instead of the continuity of $a(x)$  in Theorem \ref{thm_1}. We also refer to \cite{BaB25} for the gap constraint of functionals concerning double orlicz phases.
\item For the sake of simplicity, we only confine  $\mathbb{A}(x,u)$  as continuous form in the $x$-variable in Theorems~\ref{thm_1} and~\ref{thm_2}. Indeed, we can make use of the argument from \cite{DiES04} to get the same results by relaxing   $\mathbb{A}(x,u)$ as the so-called vanishing mean oscillation with respect to the $x$-variable.
\end{itemize}
\end{remark}

To realize the partial $C_\mathrm{loc}^{1,\gamma}$-regularity of local minimizers, it is necessary to impose higher regularity assumptions on the given data. More precisely, if ${\varphi''_1}(\cdot), {\varphi''_2}(\cdot)$ and $\mathbb{A}(\cdot,\cdot), a(\cdot)$ are always  H\"older continuous for any variables,  then we can attain the partial $C_\mathrm{loc}^{1,\gamma}$-regularity for local minimizers of the functional \eqref{main_funl}. To this end, we need to assume that
$\varphi_k^{\prime \prime}(t)\, (k = 1,2)$ satisfy the H\"older-type condition (cf. \cite[Assumption 2.2]{DiSV09}) as follows: for any $t>0$ and $s \in \mathbb{R}$ with $|s|<\frac{1}{2} t$, there exists  $\beta \in (0,1)$ and the constant $c>0$ such that
\begin{equation}\label{cond_Holder_Phi}
\left|\varphi_k^{\prime \prime}(s+t)-\varphi_k^{\prime \prime}(t)\right| \leq c\, \varphi_k^{\prime \prime}(t)\left(\frac{|s|}{t}\right)^\beta,
\end{equation}
and $\mathbb{A}(\cdot,\cdot)$ and $a(\cdot)$, respectively, are uniformly $\beta$-H\"older continuous in all variables with form
\begin{equation}\label{cond_Holder_A_a}
\omega_{\mathbb{A}}(t) = \omega_a(t) = \min\{1,t^{\,\beta}\}\quad \forall t > 0.
\end{equation}
With further regularity assumptions imposed on the datum, we are now in a position to state the partial $C_\mathrm{loc}^{1,\gamma}$-regularity result of local minimizers of the functional $\mathcal{F}_{\mathbb{A},{\varphi_1},{\varphi_2}}(u ; \Omega)$ as follows.

\begin{theorem}\label{thm_3}
Let ${\varphi_1}, {\varphi_2} \in C^1[0,+\infty) \cap C^2(0,+\infty)$ satisfy \eqref{cond_p_q} \eqref{cond_Holder_Phi} with ${\varphi_1} \prec {\varphi_2}$, and $u \in W^{1,1}(\Omega,\mathbb{R}^N)$ be the local minimizer of the functional \eqref{main_funl}. Assume that $\mathbb{A}(\cdot,\cdot)$ with \eqref{cond_A} and $a(\cdot)$, respectively, are uniformly H\"older continuous with H\"older continuity modulus \eqref{cond_Holder_A_a} satisfying  the gap condition \eqref{gap_2}. Then we have $Du \in C_\mathrm{loc}^{0,\gamma}(\Omega_0, \mathbb{R}^{N \times n})$ for some $\gamma \in (0,1)$, where  the open subset $\Omega_0 \subset \Omega$ is defined as in \eqref{def_Omega_0}.
\end{theorem}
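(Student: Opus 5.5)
The plan is to run a freezing/comparison argument around points of $\Omega_0$, with the gradient H\"older continuity coming from a Campanato-type decay of the excess. By Theorem~\ref{thm_2}, whose hypotheses are implied by those of Theorem~\ref{thm_3}, we already know $u\in C^{0,\alpha}_{\rm loc}(\Omega_0)$ for every $\alpha\in(0,1)$. Fix $y\in\Omega_0$ and a small ball $B_{2R}(x_0)\Subset\Omega_0$ near $y$. Before anything else, we need three preliminary facts:
\begin{enumerate}
\item a Caccioppoli inequality in the Orlicz double-phase setting;
\item a higher integrability (Gehring) estimate for $\varphi_1(|Du|)+a\varphi_2(|Du|)$, obtained from the gap condition \eqref{gap_2};
\item a Morrey-type estimate
\begin{equation*}
\int_{B_r}\big(\varphi_1(|Du|)+a\varphi_2(|Du|)\big)\,dx\le c\,r^{n-\theta}
\end{equation*}
for arbitrarily small $\theta>0$.
\end{enumerate}
The third fact is the usual consequence of the H\"older continuity of $u$ for every exponent combined with Caccioppoli. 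It will be used to absorb the $x$-dependence error terms.

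Next we freeze the coefficients on each ball $B_r(x_0)$. We replace $\mathbb{A}(x,u)$ by $\mathbb{A}_0:=\mathbb{A}(x_0,\langle u\rangle_{B_r})$, and $a(x)$ either by $a_0:=\inf_{B_r}a$ or by $a(x_0)$. This gives the autonomous integrand
\begin{equation*}
G(\xi)=\varphi_1(|\xi|_{\mathbb{A}_0})+a_0\varphi_2(|\xi|_{\mathbb{A}_0}).
\end{equation*}
Because $\mathbb{A}_0$ is constant, the linear change of variables $\xi\mapsto\mathbb{A}_0^{1/2}\xi$ turns $G$ into an integrand with Uhlenbeck structure of Orlicz type, with $N$-function $\phi=\varphi_1+a_0\varphi_2$. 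This $\phi$ still satisfies \eqref{cond_p_q} and \eqref{cond_Holder_Phi} with constants independent of $a_0$. Let $v$ be the $G$-minimizer with $v=u$ on $\partial B_r$. The Diening--Stroffolini--Verde theory \cite{DiSV09} then gives the excess decay
\begin{equation*}
\fint_{B_{\tau r}}|V(Dv)-\langle V(Dv)\rangle|^2\le c\,\tau^{2\gamma_0}\fint_{B_r}|V(Dv)-\langle V(Dv)\rangle|^2 ,
\end{equation*}
where $V(\xi)=\sqrt{\phi'(|\xi|)/|\xi|}\,\xi$.

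The comparison estimate comes from the minimality of $u$ together with the $\phi$-monotonicity of $G$:
\begin{equation*}
\int_{B_r}|V(Du)-V(Dv)|^2\lesssim \int_{B_r}\big(G(Du)-G(Dv)\big),
\end{equation*}
and the right-hand side is bounded by the freezing errors. The error from $\mathbb{A}$ is controlled by
\begin{equation*}
\big(r^\beta+\operatorname{osc}_{B_r}u^\beta\big)\int_{B_r}\big(\varphi_1+a\varphi_2\big)(|Du|)\le c\,r^{\beta\alpha}\,r^{n-\theta}.
\end{equation*}
The error from $a$ is $r^\beta\int_{B_r}\varphi_2(|Du|)$, which must be bounded by a power of $r$ times $\int_{B_r}\varphi_1(|Du|)$. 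Combining the excess decay for $v$ with the comparison estimate gives
\begin{equation*}
\Phi(\tau r)\le c\,\tau^{2\gamma_0}\Phi(r)+c\,r^{n+\sigma}
\end{equation*}
for some $\sigma>0$, where $\Phi(r)=\int_{B_r}|V(Du)-\langle V(Du)\rangle|^2$. A standard iteration lemma and Campanato's characterization then give $V(Du)\in C^{0,\gamma'}$. Since $V$ is a bi-H\"older homeomorphism under \eqref{cond_p_q}, this yields $Du\in C^{0,\gamma}$ near $y$.

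The main obstacle is the $a(\cdot)$ freezing term, that is, bounding $\omega_a(r)\int_{B_r}\varphi_2(|Du|)$ in the borderline case \eqref{gap_2}. The first thing to try is the argument of Colombo--Mingione and Byun--Oh. On $B_r$ we have, for some $\kappa>0$, the bound
\begin{equation*}
\varphi_1(|Du|)\le c\int_{B_{2r}}\varphi_1(|Du|)\,dx\cdot r^{-n}\lesssim r^{-\kappa}
\end{equation*}
in an averaged sense, using the Morrey estimate and higher integrability. Then
\begin{equation*}
\varphi_2(t)=(\varphi_2\circ\varphi_1^{-1})(\varphi_1(t))\le \frac{(\varphi_2\circ\varphi_1^{-1})(r^{-n})}{r^{-n}}\,\varphi_1(t)
\end{equation*}
whenever $\varphi_1(t)\le r^{-n}$, which uses that $(\varphi_2\circ\varphi_1^{-1})(s)/s$ is nondecreasing. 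Combined with \eqref{gap_2}, this gives
\begin{equation*}
\omega_a(r)\,\varphi_2(|Du|)\lesssim \varphi_1(|Du|).
\end{equation*}
To extract an extra positive power $r^\sigma$ rather than a mere bound, we would exploit the fact that the Morrey exponent $\theta$ is arbitrarily small while $\omega_a(r)=r^\beta$. This would be done by splitting $\omega_a(r)=r^{\beta/2}r^{\beta/2}$ and applying the gap condition at the level $\varphi_1(t)\le r^{-n+\theta'}$. The set where $\varphi_1(|Du|)$ exceeds this level is handled by higher integrability and Chebyshev.
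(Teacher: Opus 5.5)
There is a genuine gap in your comparison step. Testing the minimality of $u$ with the frozen minimizer $v$ gives
\[
\int_{B_r}\big(G(Du)-G(Dv)\big)\,dx\le\int_{B_r}\big(G(Du)-F(x,u,Du)\big)\,dx+\int_{B_r}\big(F(x,v,Dv)-G(Dv)\big)\,dx,
\]
with $F(x,w,\xi):=\varPsi(x,|\xi|_{\mathbb{A}(x,w)})$. Your error bookkeeping only covers the first integral.

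The second integral contains the nonnegative term $\int_{B_r}(a(x)-\inf_{B_r}a)\,\varphi_2(|Dv|_{\mathbb{A}(x,v)})\,dx$. It also contains a coefficient error governed by $|v-\langle u\rangle_{B_r}|$ rather than by $\operatorname{osc}_{B_r}u$, since there is no maximum principle for the vector-valued $v$. Your plan does not control these terms, and does not even address their finiteness:
\begin{itemize}
\item $v$ minimizes an energy weighted by $\inf_{B_r}a$, which may vanish;
\item $Dv$ is bounded only in the interior, and near $\partial B_r$ you have at best Gehring-type integrability of $\bar{\varPsi}_r(|Dv|)$ with a small exponent;
\item on the set where $|Dv|$ is large, \eqref{gap_2} with $\omega_a(r)=r^\beta$ only converts $\varphi_2$ into roughly $\varphi_1^{1+\beta/n}$.
\end{itemize}
Your level-set splitting plus Chebyshev works for the $Du$-term precisely because $(a-\inf_{B_r}a)\varphi_2(|Du|)\le\varPsi(x,|Du|)$, and $\varPsi(x,|Du|)$ has higher integrability and a Morrey bound. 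The corresponding information for $Dv$ is exactly what is missing.

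The paper never inserts $v$ into the original functional. In Lemma \ref{lem_comp_2} it shows that $u$ is an almost-minimizer of the frozen functional with respect to perturbations $\eta\in W^{1,\infty}_0(B_{r/8},\mathbb{R}^N)$. There $\|\eta\|_{L^\infty}\lesssim r\|D\eta\|_{L^\infty}$ handles $\mathbb{A}(x,u+\eta)$, and all $\eta$-dependence is absorbed into the factor $\big(\|D\eta\|_{L^\infty}/\bar{\varPsi}_r^{-1}(E(u,r))+1\big)^{q+1}$. The harmonic approximation Lemma \ref{lem_app} then produces $v$ together with the $V$-comparison estimate. On the $Du$ side, your treatment of the $a$-term is essentially the paper's: split at level $\varphi_1^{-1}(r^{-n/(1+\varepsilon)})$, gain $r^{\beta\varepsilon/(1+\varepsilon)}\mathcal{G}(r^{1/(1+\varepsilon)})$, and use higher integrability with $\varepsilon<\varepsilon_0$ on the complement.

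There are two further problems.
\begin{itemize}
\item The substitution $\xi\mapsto\mathbb{A}_0^{1/2}\xi$ does not reduce the frozen problem to Uhlenbeck structure. $\mathbb{A}_0^{1/2}Dv$ is not a gradient unless $\mathbb{A}_0$ splits as $g^{\alpha\beta}h_{ij}$. So the decay you need for $v$ must come from Lemma \ref{est_v}, not from the Diening--Stroffolini--Verde theory.
\item Your $V$ depends on $\inf_{B_r}a$, so $\Phi$ is not a single function of $r$ and the iteration lemma does not apply as written. If you freeze at $a(x_0)$ instead, $V$ depends on the center, and the bound $(a-a(x_0))\varphi_2(|Du|)\le\varPsi(x,|Du|)$ you rely on fails. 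Either way you would have to pay for switching $V$ between scales and for converting center-dependent $V$-Campanato bounds back to $Du$.
\end{itemize}
The paper avoids iteration altogether. It turns the $V$-decay of $Dv$ into the $L^1$ decay \eqref{est_camp_v}. Then, for each $\rho$, it compares only once, at scale $\rho^*=\rho^{2n/(2n+\gamma')}$, using $\fint_{B_r}|Du|\,dx\le c\,r^{-\tau}$ to absorb the factor $(\rho^*/\rho)^n$.
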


\begin{remark}
 As a special case, while $F(x, Du) = {\varphi_1}\big(|Du|_{\mathbb{A}}\big) + a(x){\varphi_2} \big(|Du|_{\mathbb{A}}\big)$ with $\mathbb{A} = \mathbb{A}(x)$ as being explicitly independent of $u$, this is a class of functional with Uhlenbeck structure. Then the full regularity of minimizers to such functional \eqref{main_funl} follows immediately. On the other hand, inspired by the literatures \cite{HaO23, BaB25}, we leave for future consideration the relaxation of the gap conditions \eqref{gap_1} or \eqref{gap_2} to show the partial regularity for the vector-valued minimizers of functional \eqref{main_funl} under  a-priori boundedness or higher integrability assumptions on the solution.
\end{remark}

To the best of our knowledge, the partial regularity of such vectorial double-phase Orlicz problem was still not studied, although it has been investigated for those functionals with variable exponent growth  \cite{RaTT13}, Orlicz growth \cite{GiPT17} and double-phase variable exponent growth \cite{Ta24}.  A key point of this paper is regarded as the vectorial counterpart of the scalar Orlicz phase problems from Baasandorj-Byun \cite{BaB25}, so that it only gets the partial regularity of local minimizers. Note that the classical De~Giorgi-Nash-Moser iteration \cite{BaB25} is invalid in the vectorial setting. To this end, the main strategy of our proof is to focus on the Morrey and Campanato estimates of gradients of minimizers under the constraint of small excess energy, which is  based on the perturbation method via double-phase harmonic approximation and  some essential improvements of several inequalities. As is a well-known fact, the $p$-harmonic approximation for $p$-growth  and ${\varphi}$-harmonic approximation for more general Orlicz growth have been considered in \cite{DuM04} and \cite{DiSV12}, respectively; while Baroni-Colombo-Mingione also extended this argument to that for double-phase growth problems, see \cite[Lemma 5.1]{BaM18}. Here, we make use of a refined version from  \cite[Lemma 4.13]{HaO23} to deal with our double-phase functional. To this end, let us freeze the coefficients $a(x)$ and $\mathbb{A}(x,u)$ to $\bar{a}_r = \inf_{B_r(x_0)} a(x)$ and $\bar{\mathbb{A}}_r^u = \mathbb{A}(x_0, \langle u \rangle_{B_r})$ on the ball $B_r(x_0)\Subset \Omega$. Then, we make several comparison estimates among the local minimizer of the original functional and the minimizers of locally regularized functionals with frozen coefficients.
This idea is inspired by the paper of Kuusi-Mingione \cite{KuM18} by introducing nonlinear potential theory for the vectorial setting to establish the comparison estimates in the $L^1$-sense independent of $\bar{a}_r$. This not only makes the subsequent iterative arguments more convenient compared with \cite{Ta24}, but also it yields a more refined form of the singular set than in \cite{GiPT17}.  Finally, we apply the isomorphism relation of the Campanato space and the H\"older space (cf. Lemma \ref{lem_isomorphism}) to attain an optimal partial H\"older continuity of local minimizers and the partial H{\"o}lder continuity of their gradients.

The remainder of this paper is organized as follows. We devote Section 2 to some basic notations concerning function spaces and some related lemmas. In Section 3, we give a higher integrability of gradients to the local minimizers of functional \eqref{main_funl}, and show comparison estimates by way of harmonic approximation. In Section 4, we further show  a few of  comparison estimates in the $L^1$-sense between the local minimizers of the original functional. Finally, we focus Section 5 on the proofs of the main results.

\section{Preliminaries}
\setcounter{equation}{0}
\setcounter{theorem}{0}

Throughout this paper, we use $c$ to denote a generic positive constant that may vary from line to line, where some important constants are distinguished by subscripts (e.g., $c_1, c_2$). To distinguish specific instances, we usually emphasize a relevant parameter depending only on prescribed quantities with parentheses
if necessary, for instance, $c = c(n, N, \Lambda, \dots)$ indicates that $c$ depends on $ n, N, \Lambda ,\cdots$.  We also denote by $a \lesssim b$ if there exists a constant $c > 0$ such that $a \leq cb$, and $a \gtrsim b$ means $a \geq cb$; and the equivalence is denoted by the symbol $a \thickapprox b$ if there exist two universal constants $c_1, c_2 > 0$ such that $c_1a \leq b \leq c_2a$.
In the context, let us denote by $B_r(x_0)$ an open ball centered at $x_0 \in \mathbb{R}^n$ with radius $r > 0$. If the center is clear, we shall omit the center point by writing $B_r = B_r(x_0)$. For any $f\in L^1(\mathcal{D},\mathbb{R}^m)$ defined on the measurable set $\mathcal{D} \subset \mathbb{R}^n$ with positive measure, we denote by $\langle f \rangle_{\mathcal{D}}$ the integral average of $f(x)$ over the subset  $\mathcal{D}$, i.e.
\begin{equation*}
\langle f \rangle_{\mathcal{D}} =
\fint_{\mathcal{D}} f \,dx =
\frac{1}{|\mathcal{D}|} \int_{\mathcal{D}} f \,dx.
\end{equation*}

Let us first recall the notation of \emph{$N$-function} and summarize their related properties. The function ${\phi } : [0, \infty) \to [0, \infty)$ is said to be an \emph{$N$-function} if ${\phi } (\cdot)$ is non-decreasing convex function with the following conditions:
\begin{equation*}
{\phi } (0)=0,\quad \lim_{t \to \infty} \phi(t) = \infty, \quad \lim_{t \to 0^+} \frac{{\phi } (t)}{t} = 0, \quad \lim_{t \to \infty} \frac{{\phi } (t)}{t} = \infty.
\end{equation*}
\emph{Its inverse} of the $N$-function ${\phi }$ is shown as
\begin{equation*}
{\phi } ^{-1}(t) = \inf\Big\{s > 0 : {\phi } (s) > t\Big\},
\end{equation*}
then we immediately get that ${\phi } ^{-1}(t)$ is non-decreasing, concave function.

For the sake of convenience, we write by $\mathbf{\Phi}_{p,q}$ the set of all $N$-functions ${\phi } \in C^1[0,+\infty) \cap C^2(0,+\infty)$ satisfying
\begin{equation*}
0 < p - 1 \leq \frac{t{\phi } ^{\prime\prime}(t)}{{\phi } ^{\prime}(t)} \leq q - 1 < \infty \qquad  \forall t > 0,
\end{equation*}
As a direct consequence of $\phi \in \mathbf{\Phi}_{p,q}$, there exist the hidden constants depending only on $p $ and $q $ such that
\begin{equation}\label{equiv}
t^2{\phi } ^{\prime\prime}(t)
\approx	t{\phi } ^{\prime}(t)
\approx	{\phi } (t) \qquad  \forall t > 0.
\end{equation}
Moreover, for any $N$-function $\phi \in \mathbf{\Phi}_{p,q}$ it follows from \cite[Lemma 2.1]{BaB25} that for a constant $\nu \in (0, 1)$ there holds
\begin{equation}\label{est_<1}
{\phi } \,(\nu t) \leq \nu^{\,p } {\phi } (t) \quad \text{and} \quad {\phi } ^{-1}(\nu t) \leq \nu^{\,\frac{1}{q}} {\phi } ^{-1}(t)\quad  \forall t > 0;
\end{equation}
similarly, for the constant $L \in [1, \infty)$ one has
\begin{equation}\label{est_>1}
{\phi } \,(Lt) \leq  L^{q }{\phi } (t) \quad \text{and} \quad {\phi } ^{-1}(Lt) \leq L^\frac{1}{p } {\phi } ^{-1}(t)\quad  \forall t > 0.
\end{equation}

We are next to construct a concave function which is equivalent to  $\big({\phi } (t)\big)^{\delta}$ as the $\delta$-power of $N$-function ${\phi } \in \mathbf{\Phi}_{p,q}$ for $\delta \in (0,1/q )$. To be precise, we get the following lemma.
\begin{lemma}\label{lem_concave}
    Let ${\phi } \in \mathbf{\Phi}_{p,q}$ and $\delta \in (0, \frac{1}{q})$. Then there exists a continuous, concave function $\psi : [0, \infty) \to [0, \infty)$ such that $\psi(t) \thickapprox ({\phi } (t))^{\delta}$ for any $t \geq 0$.
\end{lemma}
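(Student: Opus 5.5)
Set $g(t):=(\phi(t))^{\delta}$. The plan is to show that $g$ is almost decreasing in the ratio $g(t)/t$, and then to take the least concave majorant of $g$.

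From \eqref{cond_p_q} we have $p\le t\phi'(t)/\phi(t)\le q$ for all $t>0$. This is the quantitative form of \eqref{equiv} and follows by integrating $\phi''/\phi'$. Consequently $\phi(t)/t^{q}$ is non-increasing, and so $g(t)/t^{\delta q}$ is non-increasing as well. Since $\delta q<1$, for $0<s\le t$ we obtain
\[
\frac{g(t)}{t}=\frac{g(t)}{t^{\delta q}}\,t^{\delta q-1}\le \frac{g(s)}{s^{\delta q}}\,s^{\delta q-1}=\frac{g(s)}{s}.
\]
In fact $g(t)/t$ is genuinely non-increasing. One can also check directly that $g'(t)=\delta\phi^{\delta-1}\phi'$ satisfies $t g'(t)/g(t)=\delta\, t\phi'/\phi\le \delta q<1$.

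Next define
\[
\psi(t):=\sup\Big\{\sum_i\lambda_i g(t_i):\ \lambda_i\ge0,\ \sum_i\lambda_i=1,\ \sum_i\lambda_i t_i=t\Big\},
\]
which is the least concave majorant of $g$ on $[0,\infty)$. Trivially $\psi\ge g$, so only the reverse bound needs work. Take a convex combination with $\sum\lambda_i t_i=t$. For $t_i\le t$ we have $g(t_i)\le g(t)$ by monotonicity. For $t_i>t$ we use that $g(\tau)/\tau$ is non-increasing, which gives $g(t_i)\le \frac{t_i}{t}g(t)$. Combining the two cases,
\[
\sum\lambda_i g(t_i)\le g(t)\Big(1+\frac1t\sum\lambda_i t_i\Big)=2g(t).
\]
Hence $g\le\psi\le 2g$, which is the claimed equivalence $\psi\thickapprox(\phi)^{\delta}$.

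It remains to check the regularity of $\psi$. It is finite by the bound above. It is concave by construction, and nonnegative with $\psi(0)=0$ because $\psi\le 2g$ and $g(0)=0$. A finite concave function is continuous on $(0,\infty)$, and continuity at $0$ follows from $0\le\psi\le 2g$ together with $g(t)\to0$.

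The only delicate point is the second step: the ratio bound $g(t)/t\le g(s)/s$ for $s\le t$. This is exactly where the restriction $\delta<1/q$ enters, through the upper index $q$ in \eqref{cond_p_q}. Without it $g$ could grow superlinearly and no concave equivalent would exist.
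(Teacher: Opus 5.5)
Your proof is correct, but it builds the concave function differently from the paper. Both arguments rest on the same key fact: $t\mapsto(\phi(t))^{\delta}/t$ is non-increasing because $\delta q<1$. The paper gets this from \eqref{est_>1}, you from $t\phi'(t)/\phi(t)\le q$. The paper then takes the explicit primitive $\psi(t)=\int_0^t(\phi(s))^{\delta}s^{-1}\,ds$. Concavity is immediate because $\psi'$ is non-increasing, and the lower bound $\psi\ge\phi^{\delta}$ follows from the same monotonicity. The upper bound $\psi\le\phi^{\delta}/(p\delta)$ uses the lower index, through $\phi(\nu t)\le\nu^{p}\phi(t)$.

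You instead take the least concave majorant. Your estimate uses only that $g$ is non-decreasing with $g(t)/t$ non-increasing, i.e.\ that $g$ is quasi-concave. This is the classical fact that a quasi-concave function lies within a factor $2$ of its least concave majorant.

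Your route gives the universal constant $2$, independent of $p$ and $\delta$, and never uses the lower bound in \eqref{cond_p_q}. The paper's route gives an explicit $\psi\in C^1(0,\infty)$ with known derivative, at the price of the constant $1/(p\delta)$. The lemma is used later only for Jensen's inequality applied to $\phi^{\delta}$ up to constants, so either version suffices.

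A minor point: you obtain $\psi\le 2g$ after dividing by $t$, so it only covers $t>0$. At $t=0$, note that $\sum_i\lambda_i t_i=0$ with $t_i\ge0$ forces $t_i=0$ whenever $\lambda_i>0$, hence $\psi(0)=g(0)=0$.
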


\begin{proof}
Let us construct $\psi$ as follows:
\begin{equation*}
\psi(t) := \int_0^t \frac{\big({\phi } (s)\big)^{\delta}}{s} \,ds.
\end{equation*}
We notice that $\delta q<1$ implies that $\psi(\cdot)$ is a continuous function.
It follows from  \eqref{est_>1} that for ${\phi } \in \mathbf{\Phi}_{p,q}$ and $0 < s < t$ we get the following facts:
\begin{equation*}
\frac{\big({\phi } (t)\big)^{\delta}}{t}
= \frac{\big({\phi} (\frac{t}{s} s)\big)^{\delta}}{\frac{t}{s} s}
\leq \bigg(\frac{t}{s}\bigg)^{q\delta -1}\frac{\big({\phi } (s)\big)^{\delta}}{s}
\leq \frac{\big({\phi } (s)\big)^{\delta}}{s}.
\end{equation*}
This indicates that $\psi'(t)=\frac{({\phi } (t))^{\delta}}{t}$ is non-increasing, which implies that $\psi(\cdot)$ is concave function.

On the other hand, by the non-increasing property of $t \mapsto \frac{(\phi(t))^{\delta}}{t}$, then for any $0<s<t$ one has
\begin{equation*}
\big({\phi } (t)\big)^{\delta} = \int_0^t \frac{\big({\phi } (t)\big)^{\delta}}{t} \,ds \leq \psi(t).
\end{equation*}
Moreover, by the growth condition \eqref{est_<1} for ${\phi } \in \mathbf{\Phi}_{p,q}$ it implies the facts that
\begin{equation*}
\psi(t) = \int_0^t \frac{\big({\phi } (\frac{s}{t} t)\big)^{\delta}}{\frac{s}{t} t} \,ds
\leq \int_0^t \bigg(\frac{s}{t}\bigg)^{p\delta -1} \frac{\big({\phi } (t)\big)^{\delta}}{t} \,ds
= \frac{\big({\phi } (t)\big)^{\delta}}{p\delta},
\end{equation*}
which completes the equivalence of $\psi(t)$ and $({\phi } (t))^{\delta}$.
\end{proof}

For any $N$-function ${\phi } \in \mathbf{\Phi}_{p,q}$, let us introduce an auxiliary vector field $V_{\phi}:\mathbb{R}^{m}\rightarrow\mathbb{R}^{m}$ that will be useful to our main proof, which is  written by
\begin{equation*}
V_{\phi}(z)
:=	\bigg(\frac{{\phi }^{\prime}(|z|)}{|z|}
\bigg)^{\frac{1}{2}} z.
\end{equation*}
We immediately know that $|V_{\phi}(z)|^2 \thickapprox  {\phi }(|z|)$  for $z \in \mathbb{R}^{m}\setminus \{\mathbf{0}\}$, and for any $z_1, z_2 \in \mathbb{R}^{m}\setminus \{\mathbf{0}\}$ there holds
\begin{equation}\label{est_V}
\big|V_{\phi}(z_1) - V_{\phi}(z_2)\big|^2 \thickapprox \frac{{\phi }'(|z_1|+ |z_2|)}{|z_1| + |z_2|} |z_1 - z_2|^2,
\end{equation}
where the implied constant depends only on $p$ and $q$.

In the sequel, let us introduce the \emph{Musielak-Orlicz spaces}.

\begin{definition}
For open subset  $\mathcal{D} \subset \mathbb{R}^n$, the function $\varPsi : \mathcal{D} \times [0, \infty) \to [0, \infty)$ is called a Musielak-Orlicz function if
$\varPsi(x,\cdot)$ is an $N$-function in $t\ge 0$ for every $x \in \mathcal{D}$ and $\varPsi(\cdot, t)$ is measurable in $x\in \mathcal{D}$ for any $t \geq 0$. We say that a function $f : \mathcal{D} \to \mathbb{R}^m$ belongs to the Musielak-Orlicz class {$K^\varPsi(\mathcal{D}, \mathbb{R}^m)$} if
\begin{equation*}
\int_{\mathcal{D}} \varPsi\big(x, |f(x)|\big)\, dx < \infty.
\end{equation*}
The Musielak-Orlicz space is all vector functions generated by $f(x)\in K^\varPsi(\mathcal{D}, \mathbb{R}^m)$ with $\varPsi(x,\cdot) \in \mathbf{\Phi}_{p,q}$ for any $x\in \mathcal{D}$, denoted by $L^{\varPsi}(\mathcal{D},\mathbb{R}^m)$, equipped with the  Luxemburg  norm
\begin{equation*}
\|f\|_{L^{\varPsi}(\mathcal{D},\mathbb{R}^m)} = \inf\left\{ \lambda > 0 : \int_{\mathcal{D}} \varPsi\,\bigg(x, \frac{|f(x)|}{\lambda}\bigg)\, dx \leq 1 \right\}.
\end{equation*}
\end{definition}
A function $f$ belongs to \emph{the Musielak-Orlicz-Sobolev space} $W^{1,{\varPsi}}(\mathcal{D}, \mathbb{R}^m)$, if itself and its distributional gradient $Df$ belong to the Musielak-Orlicz space equipped with the norm
\begin{equation*}
\|f\|_{W^{1,{\varPsi}}(\mathcal{D}, \mathbb{R}^m)} = \|f\|_{L^{\varPsi}(\mathcal{D}, \mathbb{R}^m)} + \|Df\|_{L^{\varPsi}(\mathcal{D}, \mathbb{R}^{m \times n})}.
\end{equation*}
With $ \varPsi\big(x, t\big) = {\varphi_1}\big(t\big) + a(x)\,{\varphi_2} \big(t\big)$, we directly get the \emph{Sobolev-Poincar{\'e} type inequality} in the sense of Musielak-Orlicz-Sobolev space $W^{1,{\varPsi}}(B_r, \mathbb{R}^{m})$, which is as a special double-phase setting with $b(x)=0$ concerning the multi-phase function shown in  \cite[Theorem 4.1]{BaB25}. To be complete, we state it as follows.

\begin{lemma}\label{lem_s_p}
Let ${\varphi_1}, {\varphi_2} \in \mathbf{\Phi}_{p,q}$ with ${\varphi_1} \prec  {\varphi_2}$, and $ \varPsi\big(x, t\big) = {\varphi_1}\big(t\big) + a(x)\,{\varphi_2} \big(t\big)$ with $a(x)$ being continuous satisfying the gap condition \eqref{gap_2}. Then, for any  $f \in W^{1,{\varPsi}}(B_r(y), \mathbb{R}^m)$ and $d_2 \in [1, \frac{n^2}{n^2-1})$, there exists a constant $d_1 = d_1(n, m, p, q, d_2) \in (0,1)$ such that
\begin{equation*}
\Bigg(\fint_{B_r(y)} \bigg(\varPsi\Big(x, \frac{|f - \langle f \rangle_{B_r(y)}|}{r}\Big)\bigg)^{\,d_2} \, dx\Bigg)^{\frac{1}{d_2}} \leq c\, \Bigg(\fint_{B_r(y)} \Big(\varPsi\big(x, |Df|\big)\Big)^{\,d_1} \, dx\Bigg)^{\frac{1}{d_1}},
\end{equation*}
where $c = c(n, m, p, q, d_2, {L}, \|\varPsi(x,|Df|)\|_{L^1(B_r)}) > 0$.
\end{lemma}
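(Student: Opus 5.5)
The plan is to follow the standard scheme for double-phase Sobolev--Poincar\'e inequalities (as in Colombo--Mingione and \cite[Theorem 4.1]{BaB25}): split according to the size of $a$ on the ball and use the gap condition \eqref{gap_2} to control the oscillation of $a$. Set $a^- := \inf_{B_r(y)} a$ and $M := \|\varPsi(x,|Df|)\|_{L^1(B_r)}$. Since $\varphi_1, \varphi_2 \in \mathbf{\Phi}_{p,q}$, the functions $t \mapsto \varphi_k(t)^{d_1}$ are (up to equivalence) $N$-functions as long as $d_1 p > 1$. We also fix $d_1 < 1$ close to $1$ so that the Sobolev exponent of $d_1$ leaves room for $d_2 < n^2/(n^2-1)$; this is why we need $d_1 \geq d_1(n,d_2,p)$.

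\textbf{Step 1 (frozen Orlicz Poincar\'e).} For a single $N$-function $\phi \in \mathbf{\Phi}_{p,q}$, we first establish the Sobolev--Poincar\'e inequality
\begin{equation*}
\Big(\fint_{B_r} \phi\big(|f-\langle f\rangle|/r\big)^{d_2}\,dx\Big)^{1/d_2} \le c\Big(\fint_{B_r} \phi(|Df|)^{d_1}\,dx\Big)^{1/d_1}.
\end{equation*}
This follows from the pointwise bound $|f-\langle f\rangle| \lesssim I_1(|Df|\chi_{B_r})$ by the Riesz potential, combined with Jensen's inequality applied to the convex function $\phi^{d_1}$ (via Lemma \ref{lem_concave}-type equivalences and \eqref{est_<1}--\eqref{est_>1}) and the $L^{1}\to L^{n/(n-1),\infty}$ mapping of $I_1$, upgraded to strong type by a truncation (Maz'ya) argument. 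This gives the inequality for $\phi = \varphi_1$ and $\phi = \varphi_2$ with constants depending only on $n, m, p, q, d_2$.

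\textbf{Step 2 (case split).} We apply Step 1 to the frozen function $\varPsi^-(t) = \varphi_1(t) + a^- \varphi_2(t)$, which again lies in $\mathbf{\Phi}_{p,q}$ with constants independent of $a^-$. It then suffices to show that $\varPsi(x,t) \le c\,\varPsi^-(t)$ for the relevant values of $t$, namely $t = |f-\langle f\rangle|/r$. The difference is $(a(x)-a^-)\varphi_2(t) \le L\,\omega_a(2r)\,\varphi_2(t)$.

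\emph{Case $a^- \ge K\omega_a(2r)$.} Here $a(x) \le (1+L/K)\,a^-$, so $\varPsi \approx \varPsi^-$ on the whole ball and we are done.

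\emph{Case $a^- < K\omega_a(2r)$ (degenerate phase).} We must absorb $\omega_a(r)\,\varphi_2(t)$ into $\varphi_1(t)$. Write $\varphi_2(t) = (\varphi_2\circ\varphi_1^{-1})(\varphi_1(t))$. When $\varphi_1(t) \lesssim r^{-n}$, the fact that $s \mapsto (\varphi_2\circ\varphi_1^{-1})(s)/s$ is nondecreasing (because $\varphi_1 \prec \varphi_2$) together with \eqref{gap_2} yields $\omega_a(r)\,\varphi_2(t) \le c\,\varphi_1(t)$.

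\textbf{Main obstacle.} The difficulty is guaranteeing $\varphi_1(t) \lesssim r^{-n}$; this is where the dependence of the constant on $M$ enters. I would not bound $t$ pointwise. Instead, I would estimate the left side with the dominating function $\varphi_2$ via Step 1 and then use Jensen's inequality:
\begin{equation*}
\varphi_1\Big(\fint |Df|^{d_1 p}\Big)^{\cdot} \le \fint \varphi_1(|Df|)^{d_1} \lesssim \big(M r^{-n}\big)^{d_1},
\end{equation*}
so that all averaged quantities sit at level $\lesssim M r^{-n}$. Then, applying \eqref{gap_2} to the monotone ratio at the level $s \approx M r^{-n}$ (and using \eqref{est_>1} to pull out $M$), we get
\begin{equation*}
\omega_a(r)\,\fint\varphi_2(\cdot)^{d_1} \lesssim c(M)\,\fint \varphi_1(\cdot)^{d_1}.
\end{equation*}
Making this rigorous requires splitting $\varphi_2 = \varphi_1 \cdot (\varphi_2\circ\varphi_1^{-1})(\varphi_1)/\varphi_1$ inside the integrals and using a Hölder-type interpolation. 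The room $d_2 < n^2/(n^2-1)$ is exactly what allows trading a small power of the $L^1$ bound $M$ against the Sobolev gain; I would first try to mimic the exponent bookkeeping of \cite[Theorem 4.1]{BaB25} at this point.
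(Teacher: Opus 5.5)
Your Step~1 and the nondegenerate case $a^-\ge K\omega_a(2r)$ are fine. The difficulty is entirely in the degenerate case (the paper gives no proof and only quotes \cite[Theorem 4.1]{BaB25}), and there your proposal stops at a sketch whose key inequality is false. Applying Step~1 with $\phi=\varphi_2$ moves the $\varphi_2$-growth onto $Df$, after which you would need
\[
\omega_a(r)\fint_{B_r}\varphi_2(|Df|)^{d_1}\,dx\le c(M)\fint_{B_r}\varphi_1(|Df|)^{d_1}\,dx .
\]
This fails. Take $\varphi_1(t)=t^p$, $\varphi_2(t)=t^q$, $\omega_a(r)=r^\alpha$ and $q/p=1+\alpha/n$ (so $\mathcal{G}\equiv1$), with $a\equiv0$ on $B_r$. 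Let $f$ be a ramp in $x_1$ with $|Df|=T$ on a thin slab $E\subset B_r$, $|E|/|B_r|=r^{-n}T^{-p}$, and $Df=0$ elsewhere. Then $\fint_{B_r}\varphi_1(|Df|)\,dx=r^{-n}$, so $M=\omega_n$, while the ratio of the two sides is $r^{\alpha}T^{(q-p)d_1}\to\infty$ as $T\to\infty$. Jensen only places averages at level $\lesssim Mr^{-n}$. It says nothing about the set where $\varphi_1(|Df|)\gg Mr^{-n}$, which is exactly where $\varphi_2/\varphi_1$ is large, and pulling $M$ out with \eqref{est_>1} fixes a constant, not this level dependence. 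The lemma itself is not contradicted, since $|f-\langle f\rangle_{B_r}|/r$ is tiny here; the loss comes from transferring $\varphi_2$ from the left-hand side onto $Df$.

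What is missing is to keep the $\varphi_2$-growth on $w:=|f-\langle f\rangle_{B_r}|/r$ and convert it pointwise into $\varphi_1$-growth with a quantified loss. This uses \eqref{gap_2} at every smaller radius together with the concavity of $\omega_a$.
\begin{itemize}
\item If $\varphi_1(t)\le r^{-n}$, monotonicity of $\varphi_2/\varphi_1$ gives $\omega_a(r)\varphi_2(t)\le K\varphi_1(t)$.
\item If $\varphi_1(t)>r^{-n}$, set $\rho:=\varphi_1(t)^{-1/n}<r$. Then $\mathcal{G}(\rho)\le K$ and $\omega_a(r)\le (r/\rho)\,\omega_a(\rho)$ give $\omega_a(r)\varphi_2(t)\le Kr\,\varphi_1(t)^{1+\frac1n}$.
\end{itemize}
Hence, with no case split, $\varPsi(x,w)\le c\,\varPsi^-(w)+c\,r\,\varphi_1(w)^{1+\frac1n}$, and the first term is handled by Step~1. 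For the second, apply the Sobolev form of Step~1 to $\varphi_1^{s}$ with $s<1$. This requires $(1+\frac1n)d_2\le\frac{n}{n-1}s$, which is possible exactly when $d_2<\frac{n^2}{n^2-1}$. Then Hölder interpolation with $h:=\varphi_1(|Df|)$ and $\frac1s=\frac{n}{(n+1)d_1}+\frac1{n+1}$ gives
\[
r\Big(\fint_{B_r}\varphi_1(w)^{(1+\frac1n)d_2}dx\Big)^{\frac1{d_2}}\le c\,r\Big(\fint_{B_r}h^{s}dx\Big)^{\frac{n+1}{ns}}\le c\,r\Big(\fint_{B_r}h\,dx\Big)^{\frac1n}\Big(\fint_{B_r}h^{d_1}dx\Big)^{\frac1{d_1}}\le c\,M^{\frac1n}\Big(\fint_{B_r}h^{d_1}dx\Big)^{\frac1{d_1}}.
\]
This is where the dependence on $\|\varPsi(x,|Df|)\|_{L^1(B_r)}$ enters. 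The decisive extra exponent $\frac1n$ comes from the concavity of $\omega_a$, not from the indices $p,q$. Controlling the ratio above level $Mr^{-n}$ by \eqref{est_>1} alone would give the power $\frac qp-1$, which need not be smaller than $\frac1n$ for functions in $\mathbf{\Phi}_{p,q}$.
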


Our main proof is proved by using the Morrey and Campanato estimates of gradients to the minimizers of various localized functionals. To this end, let us now recall the precise definition of two spaces.
\begin{definition}
For open subset $\mathcal{D}\subset \mathbb{R}^n$, $p \in[1,+\infty)$ and $\lambda \in (0,\infty)$, we have the following:
\begin{enumerate}
\item [(i)]  The Morrey space $L^{p, \lambda}(\mathcal{D}, \mathbb{R}^m)$ is defined by the set of all vector-valued functions with
\begin{equation*}
L^{p, \lambda}\left(\mathcal{D}, \mathbb{R}^m\right) :=\Bigg\{f \in L^p\left(\mathcal{D} ; \mathbb{R}^m\right): \sup_{\substack{y \in \mathcal{D} \\ r>0}} \frac{1}{r^\lambda} \int_{\mathcal{D} \cap B_r(y)}|f|^p \,dx <+\infty \Bigg\}
\end{equation*}
equipped with the norm
$$
\|f\|^p_{L^{p, \lambda}(\mathcal{D}, \mathbb{R}^m)}:= \sup_{\substack{y \in \mathcal{D}, r>0}} \frac{1}{r^\lambda} \int_{\mathcal{D} \cap B_r(y)}|f|^p \,dx.
$$
\item [(ii)]  The Campanato space $\mathcal{L}^{p,\lambda}(\mathcal{D}, \mathbb{R}^m)$ is defined as the set of all vector-valued functions with
\begin{equation*}
\mathcal{L}^{p,\lambda}(\mathcal{D}, \mathbb{R}^m) := \Bigg\{f \in L^p\left(\mathcal{D} ; \mathbb{R}^m\right): \sup_{\substack{y \in \mathcal{D}, r>0}} \frac{1}{r^\lambda} \int_{\mathcal{D} \cap B_r(y)}|f - \langle f \rangle_{B_r(y)}|^p \,dx <+\infty \Bigg\}
\end{equation*}
endowed with the norm
\begin{equation*}
\|f\|^p_{\mathcal{L}^{p,\lambda}(\mathcal{D}, \mathbb{R}^m)} := \|f\|^p_{L^{p}(\mathcal{D}, \mathbb{R}^m)} + \sup_{\substack{y \in \mathcal{D}, r>0}} \frac{1}{r^\lambda} \int_{\mathcal{D} \cap B_r(y)}|f - \langle f \rangle_{B_r(y)}|^p \,dx.
\end{equation*}
\end{enumerate}
\end{definition}

To show the equivalent relationships among the Morrey space, the Campanato space and the H\"{o}lder space, let us recall the so-called \emph{$A$-type domain} or the \emph{Ahlfors regular domain}.   $\mathcal{D}$ is $A$-type domain, if there exists a constant $A > 0$ such that
$$
\left|B_r\left(y\right) \cap \mathcal{D}\right| \geq A r^{\,n}\qquad \text{for\ any}\ y \in \mathcal{D}\  \text{and}\ r \in (0, \text{diam}(\mathcal{D})).
$$
For any fixed bounded $A$-type domain $\mathcal{D} \subset \mathbb{R}^n$, the following lemma describes the isomorphism among the Morrey space, the Campanato space, and the H\"{o}lder space, which can be found in \cite[Proposition 1.2 and Theorem 1.2]{Gi83}.

\begin{lemma}\label{lem_isomorphism}
Let $\mathcal{D} \subset \mathbb{R}^n$ be a bounded $A$-type domain. For $p \in[1,+\infty)$ and $\lambda \in (0,\infty)$ we conclude that
\begin{enumerate}

\item[(i)] if $0 \leq \lambda < n$, then $L^{p,\lambda}(\mathcal{D}, \mathbb{R}^m)$ is isomorphic to $\mathcal{L}^{p,\lambda}(\mathcal{D}, \mathbb{R}^m)$;
\item[(ii)] if $n < \lambda \leq n + p$, then $\mathcal{L}^{p,\lambda}(\mathcal{D}, \mathbb{R}^m)$ is isomorphic to $C^{0,\frac{\lambda - n}{p}}(\mathcal{D}, \mathbb{R}^m)$;
\item[(iii)] if $n - p < \lambda < n$ and $f \in W^{1,p}(\mathcal{D}, \mathbb{R}^m)$ with $Df \in L^{p,\lambda}(\mathcal{D}, \mathbb{R}^{m \times n})$, then $f \in C^{0,1-\frac{n-\lambda}{p}}(\mathcal{D}, \mathbb{R}^m)$.
\end{enumerate}
\end{lemma}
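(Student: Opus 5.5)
The plan is to follow the classical Campanato argument (as in \cite[Chapter III]{Gi83}). Everything rests on one estimate comparing mean values over nested balls, where the $A$-type condition $|B_r(y)\cap\mathcal{D}|\ge A r^n$ controls the denominators. Throughout I write $\Omega_r(y):=B_r(y)\cap\mathcal{D}$ and $f_{y,r}:=\langle f\rangle_{\Omega_r(y)}$, and I denote by $[f]_{p,\lambda}$ the Campanato seminorm computed with averages over $\Omega_r(y)$.

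The basic estimate is the following. For $0<\rho<r\le \operatorname{diam}\mathcal{D}$,
\[
|f_{y,\rho}-f_{y,r}|^p\le 2^{p-1}\big(|\Omega_\rho(y)|^{-1}\textstyle\int_{\Omega_\rho}|f-f_{y,\rho}|^p+|\Omega_\rho(y)|^{-1}\int_{\Omega_r}|f-f_{y,r}|^p\big)\le c\,A^{-1}\rho^{-n}r^{\lambda}[f]_{p,\lambda}^p .
\]
Applying this with $\rho=2^{-k-1}R$ and $r=2^{-k}R$ and summing over $k$ gives
\[
|f_{y,2^{-k}R}-f_{y,R}|\le c\,[f]_{p,\lambda}\sum_{j\le k}(2^{-j}R)^{(\lambda-n)/p}.
\]

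For (i), the inclusion $L^{p,\lambda}\subset\mathcal{L}^{p,\lambda}$ is immediate from $\int|f-f_{y,r}|^p\le 2^p\int_{\Omega_r}|f|^p$. For the converse we use $\lambda<n$. The geometric sum is then dominated by its last term, so $|f_{y,r}|\le |f_{y,R}|+c[f]_{p,\lambda}r^{(\lambda-n)/p}$. Take $R=\operatorname{diam}\mathcal{D}$, so that $|f_{y,R}|\le c\|f\|_{L^p}$. Then
\[
\int_{\Omega_r}|f|^p\le 2^{p-1}\big(\textstyle\int_{\Omega_r}|f-f_{y,r}|^p+|B_r|\,|f_{y,r}|^p\big)\lesssim r^\lambda\|f\|^p_{\mathcal{L}^{p,\lambda}},
\]
using $r^n r^{\lambda-n}=r^\lambda$ and $r^n\le c\,r^\lambda$ for bounded $r$.

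For (ii), $\lambda>n$ makes the series converge. Hence $f_{y,2^{-k}R}$ is Cauchy, its limit coincides with $f(y)$ at Lebesgue points, and $|f(y)-f_{y,r}|\le c[f]_{p,\lambda}r^{\alpha}$ with $\alpha=(\lambda-n)/p$. For $x,y\in\mathcal{D}$ with $r=|x-y|$, I would compare $f_{x,2r}$ and $f_{y,2r}$ by averaging $|f_{x,2r}-f_{y,2r}|^p$ over $\Omega_{2r}(x)\cap\Omega_{2r}(y)\supset\Omega_r(x)$. This set has measure at least $Ar^n$, and each of the two deviations is bounded by $r^\lambda[f]^p$, so $|f_{x,2r}-f_{y,2r}|\lesssim [f]\,r^\alpha$. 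The triangle inequality then gives $|f(x)-f(y)|\lesssim [f]_{p,\lambda}|x-y|^\alpha$. The reverse embedding $C^{0,\alpha}\subset\mathcal{L}^{p,\lambda}$ is direct, since $|f-f_{y,r}|\le [f]_\alpha (2r)^\alpha$ on $\Omega_r$ and $|\Omega_r|\le c r^n$. The restriction $\lambda\le n+p$ only ensures $\alpha\le1$.

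For (iii), I reduce to (ii) via Poincaré's inequality on balls, $\int_{B_r}|f-\langle f\rangle_{B_r}|^p\le c\,r^p\int_{B_r}|Df|^p\le c\,r^{p+\lambda}\|Df\|^p_{L^{p,\lambda}}$. Thus $f\in\mathcal{L}^{p,\lambda+p}$ with $n<\lambda+p<n+p$, and (ii) yields the exponent $(\lambda+p-n)/p=1-\frac{n-\lambda}{p}$. The step I expect to be the real obstacle is this Poincaré inequality on $\Omega_r(y)=B_r\cap\mathcal{D}$ near $\partial\mathcal{D}$, since the $A$-type condition alone does not give it. I would first prove the estimate on interior balls, which suffices for the local use in this paper, where the lemma is applied on balls compactly inside $\Omega_0$. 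If a global version is needed, I would assume $\mathcal{D}$ is Lipschitz (or a ball, as in all applications) and use a $W^{1,p}$ extension operator. Alternatively, I would prove (iii) directly by Morrey's potential estimate $|f(z)-f_{y,r}|\le c\int_{\Omega_r}|Df(w)||z-w|^{1-n}dw$, valid on convex sets.
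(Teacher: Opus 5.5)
Your proposal is correct and is the standard Campanato argument: a dyadic comparison of averages with the denominators controlled by $|B_r(y)\cap\mathcal{D}|\ge Ar^n$, followed by Poincar\'e's inequality to reduce (iii) to (ii) with $\lambda+p$ in place of $\lambda$; the paper gives no proof of its own and simply imports these facts from \cite{Gi83}, where they are established this way. Your caveat on (iii) is justified rather than a weakness: on a general bounded $A$-type domain such as a slit disc the global conclusion actually fails (a smooth function with bounded gradient can jump across the slit), and the interior version you propose is exactly what the proof of Theorem~\ref{thm_1} needs, since there the Morrey bound \eqref{morrey_z} holds on full balls $B_\rho(z)$ centred at $z\in B_\sigma(y)$.
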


In the sequel, we are to introduce the well-known iterating lemma, see \cite[Chapter V, Lemma 3.1]{Gi83}.

\begin{lemma}\label{lem_iteration}
Let $f(\cdot)$ be a non-negative, bounded function on the interval $[r,R] \subset (0,\infty)$. If there exist constants $\theta \in (0,1)$ and $A,B,\alpha > 0$ such that for any $r \leq \varrho< \sigma \leq R$ it holds
\begin{equation*}
f(\varrho) \leq \theta f(\sigma) + \Big(B\,(\sigma-\varrho)^{-\alpha} + A\Big)\,,
\end{equation*}
then we have
\begin{equation*}
f(r) \leq c\,\Big(B\,(R-r)^{-\alpha} + A\Big)
\end{equation*}
with $c = c(\alpha, \theta) > 0$.
\end{lemma}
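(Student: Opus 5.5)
The plan is the standard iteration argument of Giaquinta. First reduce to the case $A=0$. Replace $A$ by $A'$ satisfying
\begin{equation*}
A=A'(R-r)^{-\alpha}\ \text{ for the fixed } r,R, \quad\text{or more simply}\quad B'=B+A(R-r)^{\alpha}.
\end{equation*}
Then for all $r\le\varrho<\sigma\le R$ we have $A\le A(R-r)^\alpha(\sigma-\varrho)^{-\alpha}$, since $\sigma-\varrho\le R-r$. Hence the hypothesis becomes
\begin{equation*}
f(\varrho)\le\theta f(\sigma)+B'(\sigma-\varrho)^{-\alpha}.
\end{equation*}
It therefore suffices to prove $f(r)\le c\,B'(R-r)^{-\alpha}$, because this equals $c\,(B(R-r)^{-\alpha}+A)$.

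Next, build a geometric sequence of radii. Choose $\tau\in(0,1)$ with $\theta\tau^{-\alpha}<1$, for instance $\tau^{\alpha}=\frac{1+\theta}{2}$, which is possible since $\theta<1$. Set $\varrho_0=r$ and
\begin{equation*}
\varrho_{i+1}=\varrho_i+(1-\tau)\tau^i(R-r).
\end{equation*}
Then $\varrho_i$ increases to $R$ and $\varrho_{i+1}-\varrho_i=(1-\tau)\tau^i(R-r)$. Apply the hypothesis with $\varrho=\varrho_i$ and $\sigma=\varrho_{i+1}$, and induct to get
\begin{equation*}
f(r)\le\theta^k f(\varrho_k)+B'(1-\tau)^{-\alpha}(R-r)^{-\alpha}\sum_{i=0}^{k-1}\theta^i\tau^{-i\alpha}.
\end{equation*}

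Finally, let $k\to\infty$. Boundedness of $f$ on $[r,R]$ gives $\theta^k f(\varrho_k)\to0$. The series converges by the choice of $\tau$. The constant is $c=(1-\tau)^{-\alpha}\big(1-\theta\tau^{-\alpha}\big)^{-1}$, which depends only on $\alpha$ and $\theta$.

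The only delicate point is handling the constant $A$ so that the resulting constant does not depend on $R-r$. Absorbing $A$ into $B$ as in the first step resolves this. Alternatively, one can carry the extra term $A\sum\theta^i\le A/(1-\theta)$ directly through the iteration.
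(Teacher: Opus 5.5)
Your proof is correct. It is essentially the standard geometric-radii iteration from Giaquinta's book (Chapter V, Lemma 3.1), which the paper cites instead of giving its own proof; absorbing $A$ into $B' = B + A(R-r)^{\alpha}$ at the start, rather than carrying it through the sum, is only a cosmetic difference.
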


We are in a position to recall the following two local higher integrability results based on the reverse H\"older inequality. Let us first introduce the following \emph{Gehring-type lemma}, which can be referred to \cite[Chapter V, Proposition 1.1]{Gi83}.

\begin{lemma}\label{lem_gehring}
Let $B_r \Subset \mathcal{D}\subset \mathbb{R}^n$. If for non-negative functions $g \in L_\mathrm{loc}^1(\mathcal{D})$ and $f\in L_\mathrm{loc}^q (\mathcal{D})$ with $q>1$, there exist two constants $b > 0$ and $\delta \in (0,1)$ such that
\begin{equation*}
\fint_{B_{r/2}} g \, dx \leq  b\, \Bigg( \bigg(\fint_{B_{r}} g^{\,\delta} \, dx\bigg)^\frac{1}{\delta} + \fint_{B_{r}} f \, dx \Bigg)\,,
\end{equation*}
then there exists an $\varepsilon_0 = \varepsilon_0\big(n, q, b, \delta\big) > 0$ such that for any $\varepsilon \in (0, \varepsilon_0]$ it holds
\begin{equation*}
\bigg(\fint_{B_{r/2}} g^{1+\varepsilon} \, dx\bigg)^\frac{1}{1+\varepsilon} \leq B\,\Bigg(\fint_{B_{r}} g \, dx +  \bigg(\fint_{B_{r}} f^{1+\varepsilon} \, dx\bigg)^\frac{1}{1+\varepsilon}\Bigg)
\end{equation*}
with $B = B\big(n, q, b, \delta\big) > 0$.
\end{lemma}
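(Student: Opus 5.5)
The plan is to follow the classical Gehring–Giaquinta–Modica argument, based on a Calder\'on–Zygmund stopping-time decomposition and a level-set (Fubini) integration. The hypothesis is understood, as usual, to hold for every ball $B_\varrho(z)$ with $B_{\varrho}(z)\subset B_r$, with the same constants $b,\delta$. By translation and scaling we may take $B_r=B_1$. To handle the loss of radius between $B_{\varrho/2}$ and $B_\varrho$, we introduce the weight $d(x)=\operatorname{dist}(x,\partial B_1)$ and the localized functions $\tilde g(x)=d(x)^{n}g(x)$ and $\tilde f(x)=d(x)^{n}f(x)$. Applying the hypothesis to balls of radius comparable to $d(x)$, one checks that the reverse inequality still holds for $\tilde g,\tilde f$ on all small balls, up to constants depending only on $n$. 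This reduces the problem to a cube setting in which the hypothesis is available on every subcube, with its double inside the domain.

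\textbf{Step 1: level-set inequality.} Fix a level $\lambda>\lambda_0:=c\fint_{B_1}\tilde g\,dx$. Run the Calder\'on–Zygmund decomposition of $\tilde g$ at height $\lambda$ to obtain disjoint cubes $Q_j$ with
\begin{equation*}
\lambda<\fint_{Q_j}\tilde g\,dx\le 2^n\lambda ,
\end{equation*}
and $\tilde g\le\lambda$ a.e.\ outside $\bigcup_j Q_j$. On each $Q_j$, apply the hypothesis (rewritten via the localization) to get
\begin{equation*}
\lambda\lesssim\Big(\fint_{2Q_j}\tilde g^{\,\delta}\,dx\Big)^{1/\delta}+\fint_{2Q_j}\tilde f\,dx .
\end{equation*}
Split the integrals at the threshold $\eta\lambda$ for a small parameter $\eta$, and use that $2Q_j$ are still controlled by the stopping rule. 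Summing over $j$ then yields the key estimate
\begin{equation*}
\int_{\{\tilde g>\lambda\}}\tilde g\,dx\le c\,\lambda^{1-\delta}\int_{\{\tilde g>\eta\lambda\}}\tilde g^{\,\delta}\,dx+c\int_{\{\tilde f>\eta\lambda\}}\tilde f\,dx .
\end{equation*}

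\textbf{Step 2: integration in $\lambda$.} Multiply the key estimate by $\lambda^{\varepsilon-1}$, integrate over $\lambda\in(\lambda_0,\infty)$, and apply Fubini. The left side controls $\frac1\varepsilon\int\tilde g^{1+\varepsilon}$ minus a term $\lambda_0^{\varepsilon}\int\tilde g$. The first term on the right becomes $\frac{c}{1-\delta+\varepsilon}\,\eta^{-(1-\delta+\varepsilon)}\int\tilde g^{1+\varepsilon}$, and $\frac{1}{1-\delta+\varepsilon}$ stays bounded since $\delta<1$. The second term becomes $\frac{c}{\varepsilon}\int\tilde f^{1+\varepsilon}$. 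Multiplying through by $\varepsilon$, the coefficient in front of $\int\tilde g^{1+\varepsilon}$ on the right is $c\,\varepsilon$. Choosing $\varepsilon_0=\varepsilon_0(n,q,b,\delta)$ small lets us absorb it into the left side. Here $\varepsilon\le q-1$ is needed so that $f^{1+\varepsilon}$ is integrable.

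\textbf{Step 3: conclusion.} Undo the localization: on $B_{1/2}$ the weight satisfies $d\approx 1$, and $\lambda_0^{\varepsilon}\int\tilde g\lesssim(\fint_{B_1}g)^{1+\varepsilon}$. This gives the stated estimate with $B=B(n,q,b,\delta)$, after taking $(1+\varepsilon)$-th roots.

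\textbf{Main obstacle.} The delicate point is the absorption in Step 2, because a priori $\int\tilde g^{1+\varepsilon}$ may be infinite. I would first run the whole argument with the truncations $\min\{\tilde g,k\}$. The level-set inequality survives for $\lambda<k$, and all constants are uniform in $k$. One then absorbs and lets $k\to\infty$ by monotone convergence.
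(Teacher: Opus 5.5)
The paper gives no proof of this lemma; it quotes Giaquinta's Proposition~V.1.1. Your architecture is the classical one behind that citation: the weight $d^n$, a level-set inequality, Fubini, absorption thanks to the factor $\varepsilon$, and truncation. The Fubini computation in Step~2 and the unweighting in Step~3 are correct. You are also right that the hypothesis must be assumed on all sub-balls.

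\textbf{The gap is in Step~1.} The dyadic Calder\'on--Zygmund stopping rule gives no upper bound on $\fint_{2Q_j}\tilde g$. It only controls the dyadic parent, $\fint_{\hat Q_j}\tilde g\le\lambda$. The concentric double $2Q_j$ leaves $\hat Q_j$ and can meet a much larger selected cube $Q'$. The mass of $\tilde g$ in $Q'$, up to $2^n\lambda|Q'|$ with $|Q'|\gg|Q_j|$, may sit right next to $Q_j$, so $\fint_{2Q_j}\tilde g/\lambda$ is unbounded. Yet $\fint_{2Q_j}\tilde g^{\delta}\lesssim\lambda^{\delta}$ is exactly what you need for the linearization $\big(\fint_{2Q_j}\tilde g^{\delta}\chi_{\{\tilde g>\eta\lambda\}}\big)^{1/\delta}\le c\,\lambda^{1-\delta}\fint_{2Q_j}\tilde g^{\delta}\chi_{\{\tilde g>\eta\lambda\}}$. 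In addition, the cubes $2Q_j$ need not have bounded overlap, so the summation over $j$ is also unjustified.

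The standard repair is a stopping-radius/Vitali argument:
\begin{itemize}
\item Extend $\tilde g$ by zero. For a.e.\ $x\in\{\tilde g>\lambda\}$ set $r_x:=\sup\{\varrho>0:\fint_{B_\varrho(x)}\tilde g>\lambda\}$. Then $\fint_{B_{r_x}(x)}\tilde g=\lambda$ and $\fint_{B_\varrho(x)}\tilde g\le\lambda$ for \emph{every} $\varrho\ge r_x$.
\item Select disjoint balls $B_{2r_i}(x_i)$ with $\{\tilde g>\lambda\}\subset\bigcup_iB_{10r_i}(x_i)$ up to a null set.
\item Apply the hypothesis to the pair $B_{r_i}(x_i)\subset B_{2r_i}(x_i)$, and linearize using $\fint_{B_{2r_i}(x_i)}\tilde g\le\lambda$.
\item Use $\int_{\{\tilde g>\lambda\}}\tilde g\le\lambda\sum_i|B_{10r_i}|$ and sum over the disjoint balls $B_{2r_i}(x_i)$.
\end{itemize}

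\textbf{The localization is misdescribed, and your threshold is wrong.} The weight does not make the hypothesis hold for $\tilde g,\tilde f$ on all small balls or on every subcube. Near $\partial B_1$ the factor $d^n$ degenerates on the larger ball, so nothing transfers there. Its real role is to exclude stopping sets near the boundary at high levels. If $\fint_{B_\varrho(x)}\tilde g\ge\lambda$ and $d(x)<20\varrho$, then $d<21\varrho$ on $B_\varrho(x)$, hence $\lambda\le21^n\fint_{B_1}g$. So for $\lambda>\lambda_0:=21^n\fint_{B_1}g$ every stopping ball has $d(x_i)\ge20r_i$. Then $B_{2r_i}(x_i)\subset B_1$ with $d\simeq d(x_i)$ on it, and the hypothesis passes to $\tilde g,\tilde f$ with constants depending only on $n$.

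This requires $\lambda_0$ in terms of $\fint_{B_1}g$, not $\fint_{B_1}\tilde g$ as you wrote. Since $\tilde g\le g$, the latter can be much smaller, and then boundary stopping sets may occur. Step~3 is unaffected by this change.

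\textbf{Truncation.} Do not insert $\min\{\tilde g,k\}$ into the level-set inequality: truncation lowers its right-hand side, so the inequality is not inherited. Instead, integrate only over $\lambda\in(\lambda_0,k)$. Using $\min\{k,\tilde g/\eta\}\le\eta^{-1}\min\{k,\tilde g\}$, this gives an inequality with the finite quantity $\int\tilde g\,\min\{\tilde g,k\}^{\varepsilon}$ on both sides. Absorb it, then let $k\to\infty$.
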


Moreover, the next lemma describes a  \emph{self-improving property} based on the backward iteration of the reverse H{\"o}lder inequality, see \cite[Remark 6.12]{Gi03} or \cite[Lemma 3.1]{KuM18}.

\begin{lemma}\label{lem_self_improve}
Let $B_r \Subset \mathcal{D}\subset \mathbb{R}^n$. If for a  non-negative function $g \in L_\mathrm{loc}^{1+\varepsilon}(\mathcal{D})$, there exist two constants $A, b > 0$ such that
\begin{equation*}
\bigg(\fint_{B_{r/2}} g^{1+\varepsilon} \, dx\bigg)^\frac{1}{1+\varepsilon} \leq b\,\fint_{B_{r}} g \, dx +  A,
\end{equation*}
then, for any \(\tau > 0\) we have
\begin{equation*}
\bigg(\fint_{B_{r/2}} g^{1+\varepsilon} \, dx\bigg)^\frac{1}{1+\varepsilon} \leq B\,\Bigg(\bigg(\fint_{B_{r}} g^{\tau} \, dx\bigg)^\frac{1}{\tau} +  A \Bigg)
\end{equation*}
with $B = B\big(n, b, \tau\big) > 0$.
\end{lemma}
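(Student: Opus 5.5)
The last stated result is Lemma~\ref{lem_self_improve}: from the reverse H\"older inequality with exponent $1$ on the right, we want one with any exponent $\tau>0$ on the right. I would prove it by a covering argument combined with an absorption scheme. The absorption itself is handled by Lemma~\ref{lem_iteration}.

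\textbf{Reduction and setup.} It suffices to treat $\tau<1$; for $\tau\ge 1$ the claim follows from H\"older's inequality, $\fint g\le (\fint g^\tau)^{1/\tau}$. Write $\gamma:=1+\varepsilon$. Since the hypothesis is scale- and translation-invariant, it holds on every ball $B_{\varrho}(z)\subset B_r$, not only on $B_r$ itself. Fix radii $r/2\le \varrho<\sigma\le r$.

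\textbf{Step 1: a local estimate on the nested balls.} Cover $B_\varrho$ by balls $B_{(\sigma-\varrho)/2}(z_i)$ with $z_i\in B_\varrho$. Each doubled ball $B_{\sigma-\varrho}(z_i)$ lies in $B_\sigma$, and the covering has bounded overlap $c(n)$. Applying the hypothesis on each small ball and summing (using $\gamma\ge1$ and bounded overlap) gives
\[
\int_{B_\varrho} g^\gamma\,dx \le c\sum_i |B_{\sigma-\varrho}|^{1-\gamma}\Big(\int_{B_{\sigma-\varrho}(z_i)} g\,dx\Big)^{\gamma} + c A^\gamma r^n .
\]
I then bound each term by $\big(\int g\big)^\gamma\le \big(\int_{B_\sigma} g\big)^{\gamma-1}\int_{B_{\sigma-\varrho}(z_i)} g$ and sum to obtain
\[
\Big(\int_{B_\varrho} g^\gamma\Big) \le c\Big(\frac{r}{\sigma-\varrho}\Big)^{n(\gamma-1)} r^{-n(\gamma-1)}\Big(\int_{B_\sigma} g\Big)^{\gamma} + cA^\gamma r^n .
\]

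\textbf{Step 2: interpolation and absorption.} Choose $\theta\in(0,1)$ with $1=\theta\tau+(1-\theta)\gamma$, i.e.\ $\theta=(\gamma-1)/(\gamma-\tau)$, so that by H\"older
\[
\int_{B_\sigma} g \le \Big(\int_{B_\sigma} g^\tau\Big)^{\theta}\Big(\int_{B_\sigma} g^\gamma\Big)^{1-\theta}.
\]
Let $f(s):=\big(\int_{B_s} g^\gamma\big)^{1/\gamma}$, which is bounded because $g\in L^{\gamma}_{\rm loc}$. The quantity $f(\sigma)^{1-\theta}$ then enters Step~1's bound linearly, with a power strictly less than one. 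Young's inequality with a small parameter gives
\[
f(\varrho)\le \tfrac12 f(\sigma)+ c\,(\sigma-\varrho)^{-\alpha} r^{\beta}\Big(\int_{B_r} g^\tau\Big)^{1/\tau}+ cA r^{n/\gamma}
\]
for suitable exponents $\alpha,\beta$ depending on $n,\gamma,\tau$.

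\textbf{Step 3: conclusion and main obstacle.} Lemma~\ref{lem_iteration} on $[r/2,r]$ removes the $f(\sigma)$ term. Normalizing by $|B_r|$ gives the claim, since the powers of $r$ balance by scaling. The constant $B$ depends on $n,b,\tau$ but, notably, must not depend on $\varepsilon$ blowing up. I expect the main obstacle to be the exponent bookkeeping in Steps~1--2. Specifically, I must check that the factor $(\sigma-\varrho)^{-\alpha}$ combines with $r^\beta$ into a scale-invariant quantity. I must also check that the Young exponent $1/(1-\theta)$, relative to the $\gamma$-th power, makes the $f(\sigma)$ term appear with power exactly one, so that it is absorbable. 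If the powers do not match directly, I would instead run the iteration on $f(s)^\gamma$, and use Young with exponents $\gamma/((1-\theta)\gamma)$ and its conjugate.
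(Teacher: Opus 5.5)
Your scheme (covering $B_\varrho$ by balls whose doubles lie in $B_\sigma$, H\"older interpolation between $L^\tau$ and $L^{1+\varepsilon}$, Young's inequality, and Lemma~\ref{lem_iteration} on $[r/2,r]$) is the standard argument behind the references the paper cites for this lemma. The paper gives no proof of its own, and your scheme does work, but one step is a genuine gap: you cannot get the inequality on sub-balls from ``scale- and translation-invariance''. With the hypothesis only on the single ball $B_r$, the lemma is false for $\tau<1$. Take $g=M\chi_S+K\chi_T$ with $S\subset B_{r/2}$, $|S|=M^{-1-\varepsilon}$, and $T\subset B_r\setminus B_{r/2}$, $|T|=k/K$, where $k$ is so large that $b\,k/|B_r|\ge |B_{r/2}|^{-1/(1+\varepsilon)}$, the left-hand side. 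The hypothesis then holds for every $A>0$. Yet $\int_{B_r}g^\tau\,dx=M^{\tau-1-\varepsilon}+kK^{\tau-1}\to0$ as $M,K\to\infty$, so the conclusion fails once $A$ is small. The inequality must therefore be \emph{assumed} on every ball $B\Subset\mathcal D$, in particular on every ball inside $B_r$. That is how the lemma is meant in the cited sources, and it is what holds in Lemma~\ref{lem_high_int}, where the reverse H\"older inequality is established on every ball $B_r\Subset\Omega$.

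The exponent bookkeeping you flag as the main obstacle closes exactly, but your exponents are off. Set $\gamma=1+\varepsilon$ and $\theta=(\gamma-1)/(\gamma-\tau)$. After taking $\gamma$-th roots in Step~1, the interpolation produces $f(\sigma)^{\gamma(1-\theta)}$, not $f(\sigma)^{1-\theta}$. The needed inequality $\gamma(1-\theta)=1-\theta\tau<1$ follows at once from $1=\theta\tau+(1-\theta)\gamma$. Apply Young's inequality with exponents $1/(1-\theta\tau)$ and $1/(\theta\tau)$, and use $(\gamma-1)/(\theta\tau)=(\gamma-\tau)/\tau$; this gives
\[
f(\varrho)\le \tfrac12 f(\sigma)+c\,(\sigma-\varrho)^{-\frac{n}{\tau}+\frac{n}{\gamma}}\Big(\int_{B_r}g^\tau\,dx\Big)^{1/\tau}+c\,A\,r^{n/\gamma}.
\]
So $\beta=0$ and $\alpha=n/\tau-n/\gamma$. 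After Lemma~\ref{lem_iteration}, dividing by $|B_{r/2}|^{1/\gamma}$ yields exactly $(\fint_{B_r}g^\tau\,dx)^{1/\tau}+A$. Your fallback exponent $\gamma/((1-\theta)\gamma)=1/(1-\theta)$ is wrong: applied to $f^\gamma$ it produces $f(\sigma)^{\gamma^2}$, which cannot be absorbed. The correct exponent is $1/((1-\theta)\gamma)$ in either formulation.

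Finally, drop the demand that $B$ be independent of $\varepsilon$. Your constant blows up as $\varepsilon\to0$ through the conjugate exponent $1/(\theta\tau)$, and this is unavoidable. For small $\varepsilon$, $g(x)=|x|^{-s}$ with $s=\frac{n}{1+\varepsilon}-\varepsilon$ satisfies the hypothesis on all balls with $b=b(n)$. At the same time $(\fint_{B_{1/2}}g^{1+\varepsilon}\,dx)^{1/(1+\varepsilon)}\gtrsim\varepsilon^{-1/2}$, while $(\fint_{B_1}g^\tau\,dx)^{1/\tau}\le(1-\tau)^{-1/\tau}$. So $B=B(n,b,\tau,\varepsilon)$, which is harmless because $\varepsilon\le\varepsilon_0(\texttt{data})$ is fixed in every application.
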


Finally, it is useful for the \emph{Hausdorff dimensional estimate} of singular set that we introduce the following Hausdorff measure estimate concerning an exceptional set, see \cite[Chapter IV, Theorem 2.2]{Gi83}.

\begin{lemma}\label{lem_Hausdorff}
Let $f\in L^1_\mathrm{loc}(\mathcal{D})$ with $\mathcal{D}$ as an open subset of $\mathbb{R}^n$ and $0\leq s<n$. Then for the set
\begin{equation*}
\mathcal{Q}_s:=
\left\{
y \in \mathcal{D}:
\limsup_{r \rightarrow 0^{+}} r^{-s} \int_{B_r(y)} |f| \,dx >0 \quad \text {for\ any} \ \ B_r(y)\Subset \mathcal{D}
\right\},
\end{equation*}
we have $\mathcal{H}^s(\mathcal{Q}_s)=0$, where $\mathcal{H}^s(\cdot)$ denotes  the $s$-dimensional Hausdorff measure.
\end{lemma}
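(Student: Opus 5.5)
The plan is to treat this as a standard covering argument: split the exceptional set into countably many pieces with a fixed positive threshold, show each piece is Lebesgue-null, and then use a Vitali covering together with the absolute continuity of the integral to make its $s$-dimensional Hausdorff content as small as we like.

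\textbf{Step 1: reduction.} Write
\begin{equation*}
\mathcal{Q}_s=\bigcup_{k\ge 1}\mathcal{Q}^k,\qquad \mathcal{Q}^k:=\Big\{y\in\mathcal{D}: \limsup_{r\to 0^+} r^{-s}\int_{B_r(y)}|f|\,dx>\tfrac1k\Big\}.
\end{equation*}
Exhausting $\mathcal{D}$ by countably many compact sets $K$, and using countable subadditivity of $\mathcal{H}^s$, it suffices to prove $\mathcal{H}^s(\mathcal{Q}^k\cap K)=0$ for fixed $k$ and fixed compact $K\subset\mathcal{D}$.

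\textbf{Step 2: $\mathcal{Q}^k$ is Lebesgue-null.} By the Lebesgue differentiation theorem, for a.e. $y$ the quantity $r^{-n}\int_{B_r(y)}|f|\,dx$ stays bounded as $r\to0^+$. Since $s<n$,
\begin{equation*}
r^{-s}\int_{B_r(y)}|f|\,dx=r^{\,n-s}\cdot r^{-n}\int_{B_r(y)}|f|\,dx\to 0
\end{equation*}
at every such point. Hence $|\mathcal{Q}^k|=0$. Fix $\varepsilon>0$. Because $f\in L^1$ on a compact neighbourhood of $K$, absolute continuity of the integral gives $\eta>0$ such that $\int_E|f|\,dx<\varepsilon$ whenever $E$ lies in that neighbourhood and $|E|<\eta$. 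By outer regularity of Lebesgue measure, choose an open set $U\supset\mathcal{Q}^k\cap K$ inside the neighbourhood with $|U|<\eta$; then $\int_U|f|\,dx<\varepsilon$.

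\textbf{Step 3: covering.} Fix $\delta>0$. For each $y\in\mathcal{Q}^k\cap K$ pick $r_y<\delta$ such that $B_{r_y}(y)\subset U$ and
\begin{equation*}
r_y^{\,s}<k\int_{B_{r_y}(y)}|f|\,dx .
\end{equation*}
The radii are uniformly bounded, so the Vitali $5r$-covering lemma yields a countable disjoint subfamily $\{B_{r_i}(y_i)\}_i$ whose dilates $\{B_{5r_i}(y_i)\}_i$ cover $\mathcal{Q}^k\cap K$. Therefore
\begin{equation*}
\mathcal{H}^s_{10\delta}(\mathcal{Q}^k\cap K)\le c(s)\sum_i (5r_i)^s\le c(s)\,5^s k\sum_i\int_{B_{r_i}(y_i)}|f|\,dx\le c(s)\,5^s k\int_U|f|\,dx< c\,k\,\varepsilon ,
\end{equation*}
where the third inequality uses that the balls are disjoint and contained in $U$.

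\textbf{Step 4: conclusion.} Let $\varepsilon\to0$ with $\delta$ fixed, and then let $\delta\to0$; this gives $\mathcal{H}^s(\mathcal{Q}^k\cap K)=0$, and hence $\mathcal{H}^s(\mathcal{Q}_s)=0$ by Step 1.

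The only genuinely delicate point is Step 2. One needs the smallness of $\int_U|f|$, not merely of $|U|$, and this is exactly where the strict inequality $s<n$ enters, via Lebesgue differentiation. Everything else is routine bookkeeping with the Vitali covering lemma.
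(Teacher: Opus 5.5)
Your argument is correct and is the classical proof: Lebesgue differentiation together with $s<n$ gives $|\mathcal{Q}_s|=0$, absolute continuity yields an open $U\supset\mathcal{Q}^k\cap K$ with $\int_U|f|\,dx<\varepsilon$, and a Vitali $5r$-covering by balls with $r^s<k\int_{B_r}|f|\,dx$ then controls $\mathcal{H}^s_{10\delta}$. The paper does not prove this lemma itself but quotes it from Giaquinta's monograph (Chapter IV, Theorem 2.2), where the standard proof is essentially this covering argument.
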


\section{Higher integrability and harmonic approximation}
\setcounter{equation}{0}
\setcounter{theorem}{0}

In this section, we are to show a higher integrability of the gradients to the local minimizers of  functional $\mathcal{F}_{\mathbb{A},{\varphi_1},{\varphi_2}}(u ; \Omega)$, and recall  useful \emph{harmonic approximation} in the form of functional. First of all, let us begin with introducing some notations that will be useful throughout this section. Denote
\begin{equation}\label{Gr}
\mathcal{G}(r) := \omega_a(r)\, \frac{\big({\varphi_2} \circ \varphi_1^{-1}\big)\left(r^{-n}\right)}{r^{-n}}.
\end{equation}
By the gap condition \eqref{gap_2}, then there exist two constants $R_0 \in (0,1)$ and $K \geq 1$ such that
\begin{equation}\label{R1}
\mathcal{G}(r)\le  K
\quad \text{for\ any}\ r \in (0, R_0).
\end{equation}
Let $\Omega' \Subset \Omega$ be a bounded open subset.
In what follows, we write
\begin{equation*}
\varPsi(x,t) := {\varphi_1}(t) + a(x)\,{\varphi_2}(t),
\end{equation*}
and for any given ball $B_r = B_r(x_0) \subset \Omega' \Subset \Omega$, we write
\begin{equation}\label{bar-Psi}
\bar{a}_r := \inf_{x \in B_r} a(x)
\quad \text{and} \quad
\bar{\varPsi}_r(t) := {\varphi_1}(t) + \bar{a}_r\, {\varphi_2}(t).
\end{equation}
In order to simplify the dependence on parameters, we collect the parameters appearing in the context and denote them by
\begin{eqnarray*}
\texttt{data} &:=& \Big(n, N, p, q, \Lambda, L, K, \|a\|_{L^\infty(\Omega)}, \|\varPsi(x, |Du|)\|_{L^1(\Omega)}\Big)\,,\notag\\
\texttt{data}_0 &:=& \Big(\texttt{data}, \mathrm{dist}(\Omega', \Omega), \mathrm{diam}(\Omega')\Big)\,.
\end{eqnarray*}

Let us  now give a higher integrability of gradients to the local minimizer of functional $\mathcal{F}_{\mathbb{A},{\varphi_1},{\varphi_2}}(u ; \Omega)$.

\begin{lemma}\label{lem_high_int}
Let ${\varphi_1}, {\varphi_2} \in \mathbf{\Phi}_{p,q}$ with ${\varphi_1} \prec  {\varphi_2}$, $\mathbb{A}(\cdot,\cdot)$ satisfy \eqref{cond_A}, and  $u \in W^{1,1}(\Omega,\mathbb{R}^N)$ be a local minimizer of the functional \eqref{main_funl}. If $a(\cdot)$ is continuous satisfying the gap condition \eqref{gap_2}, then there exists an $\varepsilon_0 > 0$ such that for any $B_r \Subset \Omega$  and $\varepsilon \in (0, \varepsilon_0]$ one has
\begin{equation}\label{ineq_rev}
\bigg(\fint_{B_{r/2}} \varPsi^{1+\varepsilon}\big(x,|Du|\big) \, dx\bigg)^{\frac{1}{1+\varepsilon}} \leq c\, \bar{\varPsi}_r\,\bigg(\fint_{B_{r}} |Du| \, dx\bigg)\,
\end{equation}
where $c = c\,\big(\texttt{data}\big) \geq 1$.
Moreover, for any open subset $\Omega^\prime \Subset \Omega$, there is an $M = M\big(\texttt{data}_0\big) \geq 1$ such that
\begin{equation}\label{high_norm}
\|\varPsi(x,|Du|)\|_{L^{1+\varepsilon}(\Omega^\prime)}  \leq  {M}.
\end{equation}
\end{lemma}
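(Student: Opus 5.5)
We follow the standard route: a Caccioppoli inequality, then Sobolev--Poincar\'e (Lemma \ref{lem_s_p}), then Gehring (Lemma \ref{lem_gehring}), and finally the self-improving Lemma \ref{lem_self_improve} to reduce the right-hand side to an $L^1$ average of $|Du|$.

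\textbf{Step 1 (Caccioppoli).} Fix $B_r\Subset\Omega$ and $r/2\le\varrho<\sigma\le r$. Take a cutoff $\eta\in C_c^\infty(B_\sigma)$ with $\eta\equiv1$ on $B_\varrho$ and $|D\eta|\le 2/(\sigma-\varrho)$, and test minimality with $\phi=-\eta(u-\langle u\rangle_{B_r})$. By \eqref{cond_A} we have $|Du|_{\mathbb{A}^u}\approx|Du|$ with constants depending on $\Lambda$. Together with convexity and \eqref{est_>1}, the hole-filling trick gives
\[
\int_{B_\varrho}\varPsi(x,|Du|)\,dx\le \theta\int_{B_\sigma}\varPsi(x,|Du|)\,dx + c\int_{B_r}\varPsi\Big(x,\frac{|u-\langle u\rangle_{B_r}|}{\sigma-\varrho}\Big)dx,
\]
with $\theta<1$. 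Two points need care. First, $\mathbb A$ depends on $u$, but it enters only through the equivalence $|\cdot|_{\mathbb A^u}\approx|\cdot|$, so no continuity of $\mathbb A$ is used here. Second, one must check that the competitor has finite energy, which follows since $\phi$ is built from $u$ itself. Lemma \ref{lem_iteration}, combined with $\varPsi(x,t/(\sigma-\varrho))\le (\sigma-\varrho)^{-q}r^{q}\varPsi(x,t/r)$, then yields
\[
\fint_{B_{r/2}}\varPsi(x,|Du|)\,dx\le c\fint_{B_r}\varPsi\Big(x,\frac{|u-\langle u\rangle_{B_r}|}{r}\Big)dx.
\]

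\textbf{Step 2 (reverse H\"older and Gehring).} Apply Lemma \ref{lem_s_p} with $d_2=1$. This bounds the right-hand side above by $c\big(\fint_{B_r}\varPsi(x,|Du|)^{d_1}dx\big)^{1/d_1}$, where $c$ depends on $\texttt{data}$; this is where the gap condition \eqref{gap_2} enters, through the constant $K$ in \eqref{R1}. Lemma \ref{lem_gehring} with $g=\varPsi(x,|Du|)$ and $f=0$ then gives the higher integrability
\[
\Big(\fint_{B_{r/2}}\varPsi^{1+\varepsilon}\,dx\Big)^{\frac1{1+\varepsilon}}\le c\fint_{B_r}\varPsi(x,|Du|)\,dx
\]
for $\varepsilon\le\varepsilon_0(\texttt{data})$.

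\textbf{Step 3 (reduction to $\bar\varPsi_r$ of the $L^1$ average).} This is the main obstacle. We must replace $a(x)$ by $\bar a_r$ and pass from the $\varPsi$-average to $\bar\varPsi_r$ of the average of $|Du|$. Use Lemma \ref{lem_self_improve} with $\tau$ small, say $\tau<1/q$, to get the right-hand side $c\big(\fint_{B_r}\varPsi(x,|Du|)^\tau\big)^{1/\tau}$. Split $\varPsi(x,t)\le\bar\varPsi_r(t)+(a(x)-\bar a_r)\varphi_2(t)$ with $a(x)-\bar a_r\le 2L\omega_a(r)$.

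For the phase $\bar\varPsi_r$: Lemma \ref{lem_concave} provides a concave $\psi\approx\bar\varPsi_r^{\tau}$, uniformly in $\bar a_r$ since $\bar\varPsi_r\in\mathbf\Phi_{p,q}$. Jensen's inequality then gives $\big(\fint\bar\varPsi_r(|Du|)^\tau\big)^{1/\tau}\lesssim\bar\varPsi_r\big(\fint|Du|\big)$.

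For the perturbation term $\omega_a(r)\varphi_2(|Du|)$: apply the same concavity trick to $\varphi_2$, giving the bound $\omega_a(r)\varphi_2(\fint_{B_r}|Du|)$. Writing $\varphi_2=(\varphi_2\circ\varphi_1^{-1})\circ\varphi_1$, and using that $\varphi_2\circ\varphi_1^{-1}(s)/s$ is nondecreasing ($\varphi_1\prec\varphi_2$), we need the bound $\varphi_1(\fint_{B_r}|Du|)\le \fint_{B_r}\varphi_1(|Du|)\le c\,r^{-n}\|\varPsi(x,|Du|)\|_{L^1}$; if this quantity is at most $r^{-n}$ up to constants, then
\[
\omega_a(r)\varphi_2\Big(\fint|Du|\Big)\le \mathcal G(r)\,\varphi_1\Big(\fint|Du|\Big)\le cK\,\varphi_1\Big(\fint|Du|\Big).
\]
The delicate points are keeping the $\|\varPsi\|_{L^1}$ normalization inside $\texttt{data}$ and handling radii $r\ge R_0$; for the latter, $\omega_a\le1$ together with the boundedness of $\Omega$ absorbs them into constants. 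Summing the two phases yields \eqref{ineq_rev}.

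\textbf{Step 4 (global bound).} \eqref{high_norm} follows by covering $\Omega'$ with finitely many balls of radius $\min\{R_0,\mathrm{dist}(\Omega',\partial\Omega)\}/2$ and applying the Gehring estimate of Step 2 on each ball. The right-hand side is controlled by $\|\varPsi(x,|Du|)\|_{L^1(\Omega)}$, so $M$ depends only on $\texttt{data}_0$.
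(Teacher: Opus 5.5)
Your proposal is correct and follows essentially the same route as the paper: a Caccioppoli inequality via hole-filling and Lemma \ref{lem_iteration}, then Sobolev--Poincar\'e with $d_2=1$, Gehring, and self-improvement to an exponent below $1/q$; next you split off $\bar\varPsi_r$ via Lemma \ref{lem_concave} and Jensen, control $\omega_a(r)\varphi_2(\fint_{B_r}|Du|)$ through the monotonicity of $\varphi_2/\varphi_1$, and finish with a covering argument. The one step you leave implicit is ``up to constants'', which the paper handles by writing $\fint_{B_r}|Du|\le\varphi_1^{-1}(Tr^{-n})$ with $T=\omega_n^{-1}\|\varPsi(x,|Du|)\|_{L^1(B_r)}+1$ and using \eqref{est_>1} to get $\frac{\varphi_2\circ\varphi_1^{-1}(Tr^{-n})}{Tr^{-n}}\le T^{\frac qp-1}\frac{\varphi_2\circ\varphi_1^{-1}(r^{-n})}{r^{-n}}$, so the first inequality in your last display should carry the factor $T^{q/p-1}$.
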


\begin{proof}
For $1/2 \leq \rho < \sigma \leq 1$, let $\eta \in C^\infty_0(B_r)$ be a cut-off function with $\eta \equiv 1$ on $B_{\rho r}$, $\eta \equiv 0$ outside $B_{\sigma r}$, and $|D \eta| \leq \frac{4}{(\sigma - \rho)r}$. We now take $\phi = \eta\,(u - Q) \in W^{1,1}_0(B_{\sigma r}, \mathbb{R}^N)$ with $Q \in \mathbb{R}^N$ as  a test function of \eqref{main_funl}. By  the  ellipticity \eqref{cond_A}, the local minimality of $u$, and the properties \eqref{est_>1} for  ${\varphi_1} , {\varphi_2} \in \mathbf{\Phi}_{p,q}$, we conclude the following facts:
\begin{eqnarray*}
&&\Lambda^{-\frac{q}{2}}\fint_{B_{\sigma r}} \varPsi\big(x, |Du|\big) \, dx\\
&\leq & \fint_{B_{\sigma r}} \varPsi\big(x, \Lambda^{-\frac{1}{2}}|Du|\big) \, dx\\
&\leq & \fint_{B_{\sigma r}} \varPsi\big(x, |Du|_{\mathbb{A}^u}\big) \, dx \\
&\leq& \fint_{B_{\sigma r}} \varPsi\big(x, |Du - D\phi|_{\mathbb{A}^{u-\phi}}\big) \, dx
\\
&\le &  \Lambda^\frac{q}{2}\fint_{B_{\sigma r}} \varPsi\Big(x, \big|(1-\eta)Du - D\eta (u - Q)\big|\Big) \, dx\notag\\
&\leq& 2^{q-1}\Lambda^\frac{q}{2}\bigg(\fint_{B_{\sigma r}} \varPsi\Big(x, \big|(1-\eta)Du\big|\Big) \, dx  +  \fint_{B_{\sigma r}} \varPsi\Big(x, \big| (u - Q)D\eta\big|\Big) \, dx\bigg)\notag\\
&\leq& 2^{q-1}\Lambda^\frac{q}{2}\Bigg(\fint_{B_{\sigma r}} \varPsi\Big(x, \big|(1-\eta)Du\big|\Big) \, dx  +  \fint_{B_{\sigma r}} \varPsi\bigg(x, \frac{4|u - Q|}{(\sigma - \rho)r}\bigg) \, dx\Bigg)\notag\\
&\leq& (2\Lambda)^{q}\Bigg(\fint_{B_{\sigma r} \setminus B_{\rho r}} \varPsi\big(x, |Du|\big) \, dx  +  \bigg(\frac{4}{\sigma - \rho}\bigg)^{q}\fint_{B_{\sigma r}} \varPsi\,\bigg(x, \frac{|u - Q|}{r}\bigg) \, dx\Bigg)\,,
\end{eqnarray*}
where we used  $\frac{4}{\sigma - \rho} > 1$ and \eqref{est_>1} in  the last inequality. Multiplying both sides of the above inequality by $\Lambda^{\frac{q}{2}}$, we attain
\begin{equation*}
\fint_{B_{\rho r}} \varPsi\big(x, |Du|\big) \, dx
\leq \big(2\Lambda\big)^{2q}\fint_{B_{\sigma r} \setminus B_{\rho r}} \varPsi\big(x, |Du|\big) \, dx  +  \frac{(8\Lambda)^{2q}}{(\sigma - \rho)^{q}}\fint_{B_{\sigma r}} \varPsi\,\Big(x, \frac{|u - Q|}{r}\Big) \, dx\,.
\end{equation*}

Let us now employ the ``hole-filling"  method by adding $(2\Lambda)^{2q}\fint_{B_{\rho r}} \varPsi\big(x, |Du|\big) \, dx$, to deduce
\begin{equation*}
\fint_{B_{\rho r}} \varPsi\big(x, |Du|\big) \, dx
\leq \frac{\big(2\Lambda\big)^{2q}}{\big(2\Lambda\big)^{2q} + 1}\fint_{B_{\sigma r}} \varPsi\big(x, |Du|\big) \, dx  +  \frac{c}{(\sigma - \rho)^{q}}\fint_{B_{\sigma r}} \varPsi\,\Big(x, \frac{|u - Q|}{r}\Big) \, dx.
\end{equation*}
Using the iterating Lemma \ref{lem_iteration} on the interval $[r/2, r]$, we obtain the Caccioppoli inequality
\begin{equation}\label{cacc}
\fint_{B_{r/2}} \varPsi\big(x, |Du|\big) \, dx
\leq c\fint_{B_{r}} \varPsi\,\Big(x, \frac{|u - Q|}{r}\Big) \, dx.
\end{equation}
We now pick up $Q = \langle u \rangle_{r}$, and make use of the Sobolev-Poincar\'e inequality shown as Lemma \ref{lem_s_p} with $d_2=1$, to conclude the reverse H\"older inequality that for some $\theta \in (0,1)$,
\begin{equation*}
\fint_{B_{r/2}} \varPsi\big(x, |Du|\big) \, dx  \leq c\, \bigg(\fint_{B_{r}} \Big(\varPsi\big(x, |Du|\big)\Big)^{\,\theta} \, dx\bigg)^\frac{1}{\theta}.
\end{equation*}
By the Gehring-type Lemma \ref{lem_gehring}, then there exists an $\varepsilon_0 > 0$ such that for any $\varepsilon \in (0, \varepsilon_0]$,
\begin{equation*}
\bigg(\fint_{B_{r/2}} \Big(\varPsi\big(x, |Du|\big)\Big)^{1+\varepsilon} \, dx\bigg)^\frac{1}{1+\varepsilon}  \leq c \fint_{B_{r}} \varPsi\big(x, |Du|\big) \, dx.
\end{equation*}
Further, we employ the self-improving property of  Lemma \ref{lem_self_improve}, we  pick up an index $0<\delta < 1/q$ {to get} the following estimate
\begin{equation}\label{ineq_rev_s_i}
\bigg(\fint_{B_{r/2}} \Big(\varPsi\big(x, |Du|\big)\Big)^{1+\varepsilon} \, dx\bigg)^\frac{1}{1+\varepsilon}
\leq c\, \bigg(\fint_{B_{r}} \Big(\varPsi\big(x, |Du|\big)\Big)^{\,\delta } \, dx\bigg)^\frac{1}{\delta}.
\end{equation}
The elementary calculations yield that
\begin{equation*}
\Big(\varPsi(x, |Du|)\Big)^{\delta }\le \Big(\bar{\varPsi}_r(|Du|)\Big)^{\delta }+\Big(L\omega_a(r)\,{\varphi_2}\big(|Du|\big)\Big)^\delta
\end{equation*}
From $\bar{\varPsi}_r,\, \varphi_2 \in \mathbf{\Phi}_{p,q}$ and Lemma \ref{lem_concave}, we see that $\big(\bar{\varPsi}_r(\cdot)\big)^{\delta}$ and $\big({\varphi_2(\cdot)}\big)^{\delta}$ are equivalent to two concave functions respectively.
Then by the Jensen inequality, \eqref{ineq_rev_s_i} is estimated as
\begin{eqnarray}\label{right}
\bigg(\fint_{B_{r/2}} \Big(\varPsi(x, |Du|)\Big)^{1+\varepsilon} \, dx\bigg)^\frac{1}{1+\varepsilon}
&\leq& c\,\bar{\varPsi}_r\,\bigg(\fint_{B_{r}} |Du| \, dx\bigg)  +  c\,\omega_a(r)\,{\varphi_2}\,\bigg(\fint_{B_{r}} |Du| \, dx\bigg)\,.
\end{eqnarray}

In what follows, we are to estimate the second term of the right-hand side in \eqref{right}. Since $|Du|= \varphi_1^{-1}\Big({\varphi_1}(|Du|)\Big)$,  we  apply  the Jensen inequality to deduce that
\begin{equation}\label{est_l_1}
\fint_{B_{r}} |Du|\,dx \leq {\varphi_1^{-1}}\,\bigg(\fint_{B_{r}} {\varphi_1}(|Du|)\,dx\bigg)
\leq \varphi_1^{-1}\Big({\|\varPsi(x, |Du|)\|_{L^1(B_r)}}\big(\omega_n r^n\big)^{-1}\Big)\,
\end{equation}
where $\omega_n$ is the measure of the $n$-dimensional unit ball. Byt $\varphi_1 \prec \varphi_2$ it leads to that both $\varphi_2 \circ \varphi^{-1}_1(\cdot)$ and $\frac{{\varphi_2}(\cdot)}{{\varphi_1}(\cdot)}$ are non-decreasing, see \cite[Lemma 2.7]{ByO20}. Let $T = \|\varPsi(x, |Du|)\|_{L^1(B_r)}\omega^{-1}_n + 1$, it yields $\fint_{B_{r}} |Du|\,dx \le \varphi_1^{-1}\big(T r^{-n}\big)$ in accordance with the non-decreasing property of $\varphi_1^{-1}(\cdot)$ and \eqref{est_l_1}, which implies that
\begin{equation}\label{omega}
\frac{{\varphi_2}\,\Big(\fint_{B_{r}} |Du| \, dx\Big)}{{\varphi_1}\,\Big(\fint_{B_{r}} |Du| \, dx\Big)}
\le		\frac{{\varphi_2} \circ \varphi_1^{-1}\big(T r^{-n}\big)}{{\varphi_1} \circ \varphi_1^{-1}\big(T r^{-n}\big)}
\leq	(T)^{\frac{q}{p}-1}\,\frac{{\varphi_2}\circ \varphi_1^{-1}\big(r^{-n}\big)}{r^{-n}},
\end{equation}
where we used \eqref{est_>1} in the last inequality.
Combining \eqref{omega} with ${\varphi_1}(\cdot)\le {\varphi_1}(\cdot)+\bar{a}_r\, {\varphi_2}(\cdot)=\bar{\varPsi}_r\,(\cdot)$, the second term on the right-hand side in \eqref{right} is estimated as follows:
\begin{eqnarray}\label{right-2}
\omega_a(r)\,{\varphi_2}\,\bigg(\fint_{B_{r}} |Du| \, dx\bigg)
&\leq&  \omega_a(r)\,\frac{{\varphi_2}\,\Big(\fint_{B_{r}} |Du| \, dx\Big)}{{\varphi_1}\,\Big(\fint_{B_{r}} |Du| \, dx\Big)}\,{\varphi_1}\,\bigg(\fint_{B_{r}} |Du| \, dx\bigg)\notag\\
&\leq& {c\, \omega_a(r)\,\frac{{\varphi_2}\circ \varphi_1^{-1}\big(r^{-n}\big)}{ r^{-n}}\,\bar{\varPsi}_r\,\bigg(\fint_{B_{r}} |Du| \, dx\bigg)}   \notag\\
&=& c \, \mathcal{G}(r)\,\bar{\varPsi}_r\,\bigg(\fint_{B_{r}} |Du| \, dx\bigg)\,,
\end{eqnarray}
Let us put \eqref{ineq_rev_s_i} \eqref{right} together with \eqref{right-2} to obtain
\begin{eqnarray*}
\bigg(\fint_{B_{r/2}} \Big(\varPsi\big(x, |Du|\big)\Big)^{1+\varepsilon} dx\bigg)^\frac{1}{1+\varepsilon}
&\leq & c\,\Big(1+  \mathcal{G}(r)\Big)\,\bar{\varPsi}_r\,\bigg(\fint_{B_{r}} |Du| \, dx\bigg) \notag\\
&\leq & c\,\Big(1+ K\Big)\,\bar{\varPsi}_r\,\bigg(\fint_{B_{r}} |Du| \, dx\bigg)\,,
\end{eqnarray*}
where we used \eqref{R1} in the last inequality.
The boundedness of  \eqref{high_norm}  readily follows from a standard finite covering argument.
\end{proof}

\begin{remark}
By Definition of $\bar{\varPsi}_r(\cdot)$ as in \eqref{bar-Psi} we see that $\bar{\varPsi}_r(t) \leq \varPsi(x, t)$ for  $x \in B_r$. Then, by the H\"older inequality and reverse H\"older inequality  \eqref{ineq_rev} we have
\begin{equation}\label{ineq_rev_jense}
\fint_{B_{r/2}} \bar{\varPsi}_r\big(|Du|\big)\, dx
\leq
\bigg(\fint_{B_{r/2}} \Big(\varPsi\big(x, |Du|\big)\Big)^{1+\varepsilon} dx\bigg)^\frac{1}{1+\varepsilon}
\leq c\,\bar{\varPsi}_r\,\bigg(\fint_{B_{r}} |Du| \, dx\bigg)\,.
\end{equation}
\end{remark}

In what follows, we give the Caccioppoli-type inequality to the local minimizer $u$ of \eqref{main_funl} in $L^1$-sense.

\begin{lemma}\label{lem_cacc}
For ${\varphi_1}, {\varphi_2} \in \mathbf{\Phi}_{p,q}$ with ${\varphi_1} \prec  {\varphi_2}$ and $\mathbb{A}(\cdot,\cdot)$ satisfying \eqref{cond_A}, let $u \in W^{1,1}(\Omega,\mathbb{R}^N)$ be a local minimizer of the functional \eqref{main_funl}. If $a(\cdot)$ is continuous with the modulus of continuity  \eqref{cond_continuous} satisfying the gap condition \eqref{gap_2}, then for any $B_r \Subset \Omega$ we have
\begin{equation}\label{cacc_L_1}
\fint_{B_{r/4}} |Du| \, dx \leq c_5\, \fint_{B_r} \frac{|u - \langle u \rangle_{r}|}{r} \, dx\,,
\end{equation}
where $c_5 = c_5\,\big(\texttt{data}\big) \geq 1$.
\end{lemma}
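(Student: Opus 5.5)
The plan is to combine the Caccioppoli inequality \eqref{cacc} from the proof of Lemma~\ref{lem_high_int} with the reverse Hölder inequality \eqref{ineq_rev}. We then convert the $\bar{\varPsi}$-type bound into an $L^1$ bound through Jensen's inequality and the monotonicity/growth properties \eqref{est_<1}, \eqref{est_>1}.

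\textbf{Step 1 (lower bound).} On $B_{r/4}$ we have $\bar{\varPsi}_{r/2}\le \varPsi(x,\cdot)$, and $\bar{\varPsi}_{r/2}$ is convex. Jensen's inequality and \eqref{cacc} on $B_{r/2}$ (used with center $x_0$, radius $r/2$, and $Q=\langle u\rangle_r$) give
\begin{equation*}
\bar{\varPsi}_{r/2}\Big(\fint_{B_{r/4}}|Du|\,dx\Big)\le \fint_{B_{r/4}}\varPsi(x,|Du|)\,dx\le c\fint_{B_{r/2}}\varPsi\Big(x,\frac{|u-\langle u\rangle_r|}{r}\Big)dx .
\end{equation*}

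\textbf{Step 2 (upper bound in $L^1$).} The right-hand side still carries $\varPsi$ rather than an $L^1$ average. The natural remedy is a reverse Hölder (self-improving) inequality for $w:=|u-\langle u\rangle_r|/r$ in the form
\begin{equation*}
\fint_{B_{r/2}}\varPsi(x,w)\,dx\le c\,\bar{\varPsi}_r\Big(\fint_{B_r}w\,dx\Big).
\end{equation*}
To obtain it, first apply Lemma~\ref{lem_s_p} with some $d_2>1$ on balls $B_\rho(y)\subset B_r$; combined with \eqref{cacc}, this gives a reverse Hölder inequality for $\varPsi(x,|u-\langle u\rangle_\rho|/\rho)$ with a lower exponent $d_1$ on the right. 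Next, use Lemma~\ref{lem_self_improve} to lower the exponent to some $\delta<1/q$. Then split $\varPsi^\delta\le \bar{\varPsi}_r^\delta+(L\omega_a(r)\varphi_2)^\delta$, use Lemma~\ref{lem_concave} and Jensen's inequality, and absorb the $\omega_a$-term exactly as in \eqref{omega}--\eqref{right-2}. The gap quantity $\mathcal{G}(r)\le K$ from \eqref{R1} controls that term. The only extra input is that $\fint_{B_r}w\,dx\le c\fint_{B_r}|Du|\,dx\le \varphi_1^{-1}(Tr^{-n})$ by Poincaré's inequality, so that the ratio estimate \eqref{omega} applies to $\fint w$ as well.

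\textbf{Step 3 (comparison of frozen functions).} Since $\bar a_{r}\le \bar a_{r/2}$, we have $\bar{\varPsi}_r\le\bar{\varPsi}_{r/2}$. Steps 1 and 2 then give $\bar{\varPsi}_{r/2}(\fint_{B_{r/4}}|Du|)\le c\,\bar{\varPsi}_{r/2}(\fint_{B_r}w)$. Because $\bar{\varPsi}_{r/2}\in\mathbf{\Phi}_{p,q}$ is increasing with the doubling bounds \eqref{est_<1}--\eqref{est_>1}, we can invert it: $\fint_{B_{r/4}}|Du|\le c^{1/p}\fint_{B_r}w$. This is \eqref{cacc_L_1}.

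The main obstacle is Step 2, which is a reverse Hölder inequality for $u-\langle u\rangle$ down to exponent $\delta<1/q$, uniformly in $\bar a$. If the covering/self-improvement for $w$ is troublesome, the fallback is to bound $\varPsi(x,w)$ directly via Lemma~\ref{lem_s_p} in terms of $(\fint\varPsi(x,|Du|)^{d_1})^{1/d_1}$. One then closes by combining with \eqref{ineq_rev_s_i} and a hole-filling iteration (Lemma~\ref{lem_iteration}) over radii between $r/4$ and $r$, which absorbs the $|Du|$ terms into the left-hand side.
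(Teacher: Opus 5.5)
Your proposal is correct and follows essentially the same route as the paper. Both arguments combine \eqref{cacc} with Lemma~\ref{lem_s_p} to get a reverse H\"older inequality for $\varPsi(x,|u-\text{const}|/r)$, then lower the exponent to $\delta<1/q$ via Lemma~\ref{lem_self_improve}, split with Lemma~\ref{lem_concave} and Jensen while absorbing the $\omega_a$-term through Poincar\'e and $\mathcal{G}(r)\le K$, and finish with Jensen on $B_{r/4}$ and inversion of the frozen $N$-function. The differences are cosmetic: the paper works with the mean $\langle u\rangle_{r/2}$ and with $\bar{\varPsi}_r$ throughout, switching to $\langle u\rangle_r$ only at the end, whereas you use $\langle u\rangle_r$ directly and compare $\bar{\varPsi}_r\le\bar{\varPsi}_{r/2}$ before inverting.
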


\begin{proof}
According to the Sobolev-Poincar\'e inequality as in Lemma \ref{lem_s_p} and the Caccioppoli inequality \eqref{cacc}, then for $d \in \Big(1, \frac{n^2}{n^2 - 1}\Big)$ we get
\begin{eqnarray*}
\Bigg(\fint_{B_{r/2}} \bigg(\varPsi\,\Big(x, \frac{|u - \langle u \rangle_{r/2}|}{r}\Big)\,\bigg)^{d} \, dx\Bigg)^\frac{1}{d}
&\leq &  c \fint_{B_{r/2}} \varPsi\big(x, |Du|\big) \, dx  \\
&\leq &  c \fint_{B_{r}} \varPsi\,\bigg(x, \frac{|u - \langle u \rangle_{r/2}|}{r}\bigg) \, dx.
\end{eqnarray*}
By Lemma \ref{lem_self_improve} it  yields that for any $0<\delta<1/q$ it holds
\begin{eqnarray*}
\Bigg(\fint_{B_{r/2}} \bigg(\varPsi\,\Big(x, \frac{|u - \langle u \rangle_{r/2}|}{r}\Big) \,\bigg)^d dx\Bigg)^\frac{1}{d}
&\leq&  c\,\Bigg(\fint_{B_{r}} \bigg(\varPsi\,\Big(x, \frac{|u - \langle u \rangle_{r/2}|}{r}\Big)\,\bigg)^{\delta} \, dx\Bigg)^\frac{1}{\delta}.
\end{eqnarray*}
It follows from Lemma \ref{lem_concave} that both $\big(\bar{\varPsi}_r(\cdot)\big)^\delta$ and $\big(\varphi_2(\cdot)\big)^\delta$ are two concave functions, respectively. Then we use the Jensen inequality to deduce that
\begin{eqnarray}\label{cacc_2}
&&\Bigg(\fint_{B_{r/2}} \bigg(\varPsi\,\Big(x, \frac{|u - \langle u \rangle_{r/2}|}{r}\Big) \,\bigg)^d dx\Bigg)^\frac{1}{d}\notag\\
&\leq& c\, \Bigg(\fint_{B_{r}} \bigg(\bar{\varPsi}_r\,\Big(\frac{|u - \langle u \rangle_{r/2}|}{r}\Big)\,\bigg)^{\delta } \, dx\Bigg)^\frac{1}{\delta}  + c\, \Bigg(\fint_{B_{r}} \big|a(x)-\bar{a}_r\big|^{\,\delta}\bigg({\varphi_2}\Big(\frac{|u - \langle u \rangle_{r/2}|}{r}\Big)\bigg)^{\delta } \, dx\Bigg)^\frac{1}{\delta}\notag\\
&\leq& c\, \Bigg(\fint_{B_{r}} \bigg(\bar{\varPsi}_r\,\Big(\frac{|u - \langle u \rangle_{r/2}|}{r}\Big)\,\bigg)^{\delta } \, dx\Bigg)^\frac{1}{\delta}  + c\,L\,\omega_a(r) \Bigg(\fint_{B_{r}} \bigg({\varphi_2}\Big(\frac{|u - \langle u \rangle_{r/2}|}{r}\Big)\bigg)^{\delta } \, dx\Bigg)^\frac{1}{\delta}\notag\\
&\leq & c\, \bar{\varPsi}_r\,\Bigg(\fint_{B_{r}} \frac{|u - \langle u \rangle_{r/2}|}{r} \, dx\Bigg)  +  c\,L\,\omega_a(r)\,{\varphi_2}\,\Bigg(\fint_{B_{r}} \frac{|u - \langle u \rangle_{r/2}|}{r} \, dx\Bigg)\,.
\end{eqnarray}
Using the Poincar\'e inequality and the Jensen inequality, together with \eqref{est_>1}, we deduce that
\begin{eqnarray}\label{poincare}
\fint_{B_{r}} \frac{|u - \langle u \rangle_{r/2}|}{r} \, dx
&\leq& c_6 \fint_{B_{r}} |Du|\,dx \notag\\
&\leq& \varphi_1^{-1}\,\Bigg(\fint_{B_{ r}} {\varphi_1}\,\Big(c_6|Du|\Big)\,dx\Bigg) \notag\\
&\leq&  \varphi_1^{-1}\,\Bigg( \Big(c_6 +1\Big)^{q}\fint_{B_{ r}} {\varphi_1}\,\big(|Du|\big)\,dx\Bigg) \notag\\
&\leq& \varphi_1^{-1}\bigg(\Big(c_6+1\Big)^{q}\,\omega^{-1}_n  r^{-n}{\|\varPsi(x, |Du|)\|_{L^1(B_r)}}\bigg) \notag\\[6pt]
&\leq& \varphi_1^{-1}\big(T\, r^{-n}\big)
\end{eqnarray}
with $T = \big(c_6+1\big)^{q}\omega^{-1}_n \,{\|\varPsi(x, |Du|)\|_{L^1(B_r)}} + 1$, which implies
$\frac{{\varphi_2}\,\big(\fint_{B_{r}} \frac{|u - \langle u \rangle_{r/2}|}{r} \, dx\big)}{{\varphi_1}\,\big(\fint_{B_{r}} \frac{|u - \langle u \rangle_{r/2}|}{r} \, dx\big)}  \leq T^{\frac{q}{p}-1} \frac{{\varphi_2}\circ \varphi_1^{-1}\big(r^{-n}\big)}{r^{-n}}$
by using the same way as in \eqref{omega}. Then it deduces that
\begin{eqnarray*}
\omega_a(r)\,{\varphi_2}\,\bigg(\fint_{B_{r}} \frac{|u - \langle u \rangle_{r/2}|}{r} \, dx\bigg)
&=&  \omega_a(r)\,\frac{{\varphi_2}\,\Big(\fint_{B_{r}} \frac{|u - \langle u \rangle_{r/2}|}{r} \, dx\Big)}{{\varphi_1}\,\Big(\fint_{B_{r}} \frac{|u - \langle u \rangle_{r/2}|}{r} \, dx\Big)}\,{\varphi_1}\,\bigg(\fint_{B_{r}} \frac{|u - \langle u \rangle_{r/2}|}{r} \, dx\bigg)\notag\\
&\leq& c \,\omega_a\big( r\big)\,\frac{{\varphi_2}\circ \varphi_1^{-1}\big(r^{-n}\big)}{r^{-n}}\,\bar{\varPsi}_r\,\bigg(\fint_{B_{r}} \frac{|u - \langle u \rangle_{r/2}|}{r} \, dx\bigg)\notag\\
&=&  c\,\mathcal{G}(r)\,\bar{\varPsi}_r\,\bigg(\fint_{B_{r}} \frac{|u - \langle u \rangle_{r/2}|}{r} \, dx\bigg)\,.
\end{eqnarray*}
This combines with \eqref{cacc_2} to get
\begin{eqnarray*}
\Bigg(\fint_{B_{r/2}} \bigg(\varPsi\,\Big(x, \frac{|u - \langle u \rangle_{r/2}|}{r}\Big) \,\bigg)^d dx\Bigg)^\frac{1}{d}
\leq  c \,\bar{\varPsi}_r\,\bigg(\fint_{B_{r}} \frac{|u - \langle u \rangle_{r/2}|}{r} \, dx\bigg)\,,
\end{eqnarray*}
where we used $\mathcal{G}(r) \leq K$ for any $r \in (0, R_0)$ as in \eqref{R1}.

Now we again make use of the Jensen inequality, the Caccioppoli inequality \eqref{cacc} and the H\"older inequality to obtain that
\begin{eqnarray*}
\bar{\varPsi}_r\,\bigg(\fint_{B_{r/4}} |Du| \, dx\bigg)
&\leq&  \fint_{B_{r/4}} \bar{\varPsi}_r\big(|Du|\big) \, dx  \notag\\
&\leq&  \fint_{B_{r/4}} \varPsi\big(x, |Du|\big) \, dx   \notag\\
&\leq&  c\fint_{B_{r/2}} \varPsi\bigg(x, \frac{|u - \langle u \rangle_{r/2}|}{r}\bigg) \, dx\notag\\
&\leq&  c \,\Bigg(\fint_{B_{r/2}} \Bigg(\varPsi\,\bigg(x, \frac{|u - \langle u \rangle_{r/2}|}{r}\bigg) \,\Bigg)^d dx\Bigg)^\frac{1}{d}\notag\\
&\leq& c\,\bar{\varPsi}_r\,\bigg(\fint_{B_{r}} \frac{|u - \langle u \rangle_{r/2}|}{r} \, dx\bigg)\,,
\end{eqnarray*}
which implies, by the non-decreasing property of $t \mapsto \bar{\varPsi}^{-1}_r(t)$, that
\begin{eqnarray*}
\fint_{B_{r/4}} |Du| \, dx
\leq  c\, \fint_{B_r} \frac{|u - \langle u \rangle_{r/2}|}{r} \, dx.
\end{eqnarray*}
Then, a simple calculation yields that
\begin{eqnarray*}
\fint_{B_{r/4}} |Du| \, dx
&\leq& c\, \bigg(\fint_{B_r} \frac{|u - \langle u \rangle_{r}|}{r} \, dx + \fint_{B_{r/2}} \frac{|u - \langle u \rangle_{r}|}{r} \, dx\bigg)\notag\\
&\leq& c\fint_{B_r} \frac{|u - \langle u \rangle_{r}|}{r} \, dx,
\end{eqnarray*}
which completes the proof.
\end{proof}

We are now to give the comparison estimates by using the locally frozen coefficient approach. To this end, for $\bar{\mathbb{A}}_r^u = \mathbb{A}(x_0, \langle u \rangle_{B_r})$ let us consider $v \in u + W_0^{1,1}\big(B_{r/8}, \mathbb{R}^N\big)$ as a minimizer of the following local frozen functional
\begin{equation}\label{fun_v}
v \mapsto \int_{B_{r/8}} \bar{\varPsi}_r\big(|Dv|_{\bar{\mathbb{A}}_r^u}\big) \, dx.
\end{equation}
It follows from ${\varphi_1}, {\varphi_2} \in \mathbf{\Phi}_{p,q}$ that $\bar{\varPsi}_r(\cdot) \in \mathbf{\Phi}_{p,q}$ and
\begin{equation}\label{cond_p_q_2}
0 < p - 1
\leq
\frac{t\bar{\varPsi}_r^{\prime\prime}(t)}{\bar{\varPsi}_r^{\prime}(t)}
\leq
q  - 1 < \infty.
\end{equation}
By the ellipticity and boundedness of $\mathbb{A}(\cdot, \cdot)$ in \eqref{cond_A} and the properties \eqref{est_<1} \eqref{est_>1} of $\varphi_1(t),\varphi_2(t)$ to $\bar{\varPsi}_r(\cdot)$, we deduce that
$\Lambda^{-\frac{q}{2}}\,\bar{\varPsi}_r\big(|\xi|\big) \leq \bar{\varPsi}_r\big(|\xi|_{\bar{\mathbb{A}}_r^u}\big) \leq \Lambda^{\frac{q}{2}}\,\bar{\varPsi}_r\big(|\xi|\big)
$ for any $\xi \in \mathbb{R}^{N \times n}$.
Then by the minimality of $v$, we have the following energy estimates:
\begin{eqnarray}\label{energy_v}
\fint_{B_{r/8}} \bar{\varPsi}_r\big(|Dv|\big) \, dx
&\leq& \Lambda^{\frac{q}{2}} \fint_{B_{r/8}} \bar{\varPsi}_r\big(|Dv|_{\bar{\mathbb{A}}_r^u}\big) \, dx \notag\\
&\leq& \Lambda^{\frac{q}{2}} \fint_{B_{r/8}} \bar{\varPsi}_r\big(|Du|_{\bar{\mathbb{A}}_r^u}\big) \, dx \notag\\
&\leq&  \Lambda^{q}\fint_{B_{r/8}} \bar{\varPsi}_r\big(|Du|\big) \, dx.
\end{eqnarray}
Thanks to the convexity and the non-decreasing property of $t \mapsto \bar{\varPsi}_r(t)$, together with \eqref{energy_v} and \eqref{ineq_rev_jense}, we get that
\begin{eqnarray}\label{energy_v_L_1}
\fint_{B_{r/8}} |Dv|\, dx
&\leq&  \bar{\varPsi}^{-1}_r\bigg(\fint_{B_{r/8}} \bar{\varPsi}_r\big(|Dv|\big) \, dx \bigg)  \notag\\
&\leq& \bar{\varPsi}^{-1}_r\bigg(\Lambda^{q} \fint_{B_{r/8}} \bar{\varPsi}_r\big(|Du|\big) \, dx\bigg)    \notag\\
&\leq& \bar{\varPsi}^{-1}_r\Bigg(\Lambda^{q} c\,\bar{\varPsi}_r\,\bigg(\fint_{B_{r/4}} |Du| \, dx\bigg)\Bigg)\notag\\
&\leq& c_0 \fint_{B_{r/4}} |Du| \, dx
\end{eqnarray}
with $c_0 = c_0\,\big(\texttt{data}\big) \geq 1$, where we used  \eqref{est_>1} in the last step.

We now present the gradient estimates for the minimizer of the locally frozen functional.
To this end, we recall the following gradient bound for the minimizer of a functional with Orlicz growth (cf. \cite{GiPT17}), which is also suitable for our setting when $a(x)$ and $\mathbb{A}(x,u)$ of the functional \eqref{main_funl} are locally frozen.

\begin{lemma}\label{est_v}\cite[Theorem 2.9]{GiPT17}
For any $N$-function $\phi \in \mathbf{\Phi}_{p,q}$ and a fixed $\bar{\mathbb{A}}$ satisfying \eqref{cond_A}, let $v \in W^{1,1}(B_R,\mathbb{R}^N)$ be a minimizer of the functional $\mathcal{F}(v, B_R) = \int_{B_R} \phi\,(|Dv|_{\bar{\mathbb{A}}}) \, dx$. Then for any ball $B_\rho \subset B_R$, there holds the local Lipschitz estimate as follows:
\begin{equation*}
\sup_{x \in B_{\rho/2}} \phi\,\big(|Dv|\big) \leq c\fint_{B_{\rho}} \phi\,\big(|Dv|\big) \, dx,
\end{equation*}
where $c = c(n,N,\Lambda, p,q) \geq 1$. Furthermore, if $\phi \in \mathbf{\Phi}_{p,q}$ satisfies \eqref{cond_Holder_Phi}, then there exists $\sigma'=\sigma'(n,\beta, N,\Lambda,p,q)\in (0,1)$ such that, for any $\delta\in(0,1)$ we have the Campanato-type estimate as
\begin{equation*}
\fint_{B_{\delta  \rho}} \big|V_{\phi}\big(Dv\big) - \big\langle V_{\phi}\big(Dv\big)\big\rangle _{B_{\delta  \rho}}\big|^2 \, dx     \leq  c\,\delta^{\,\sigma'}\fint_{B_{\rho}} \big|V_{\phi}\big(Dv\big) - \big\langle V_{\phi}\big(Dv\big)\big\rangle _{B_{\rho}}\big|^2 \, dx,
\end{equation*}
where $c = c\,(n,N,\Lambda, p,q) \geq 1$.
\end{lemma}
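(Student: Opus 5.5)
The statement is Lemma \ref{est_v}, quoted from \cite[Theorem 2.9]{GiPT17}. My plan is to reduce it to the Uhlenbeck-structure theory of Diening--Stroffolini--Verde \cite{DiSV09} by a linear change of variables that removes the constant tensor $\bar{\mathbb{A}}$.

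\emph{Step 1: reduction.} Since $\bar{\mathbb{A}}$ is a constant symmetric tensor satisfying \eqref{cond_A}, it is a symmetric positive definite operator on $\mathbb{R}^{N\times n}$ with eigenvalues in $[\Lambda^{-1},\Lambda]$. The difficulty is that $\bar{\mathbb{A}}$ mixes the indices $i$ and $\alpha$, so it is not generally of the form $Dv\mapsto Dv\, P$ for some matrix $P$. I would therefore avoid a literal change of variables and proceed as follows.

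First write the Euler--Lagrange system
\begin{equation*}
\int_{B_R}\frac{\phi'(|Dv|_{\bar{\mathbb{A}}})}{|Dv|_{\bar{\mathbb{A}}}}\,\langle \bar{\mathbb{A}}Dv, D\eta\rangle\,dx=0 .
\end{equation*}
The integrand $F(\xi)=\phi(\langle\bar{\mathbb{A}}\xi,\xi\rangle^{1/2})$ depends on $\xi$ only through a quadratic form. Setting $w=\bar{\mathbb{A}}^{1/2}$ acting on gradients, we have $|\xi|_{\bar{\mathbb{A}}}=|\bar{\mathbb{A}}^{1/2}\xi|$. Hence $F$ has ``Uhlenbeck structure with respect to the metric $\bar{\mathbb{A}}$'', and the standard proof adapts.

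\emph{Step 2: Lipschitz bound.} I would difference-quotient in $x$, which is allowed because the coefficients are constant. This gives $V_\phi(Dv)\in W^{1,2}_{\rm loc}$ via \eqref{est_V}, using the equivalence $|\xi|_{\bar{\mathbb{A}}}\approx|\xi|$ together with \eqref{est_>1}. Next, testing the differentiated system with $\partial_s v\,\eta^2\,G(|Dv|_{\bar{\mathbb{A}}})$ shows that $H=\phi(|Dv|_{\bar{\mathbb{A}}})$ is a subsolution of a uniformly elliptic linear equation. The key algebraic identity for this is
\begin{equation*}
\sum_s\langle\bar{\mathbb{A}}D\partial_s v,D\partial_s v\rangle\ \ge\ c\,\big|D|Dv|_{\bar{\mathbb{A}}}\big|^2,
\end{equation*}
which holds because $\partial_s\tfrac12|Dv|^2_{\bar{\mathbb{A}}}=\langle\bar{\mathbb{A}}Dv,D\partial_sv\rangle$ and Cauchy--Schwarz applies in the $\bar{\mathbb{A}}$ inner product. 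Moser iteration, or De Giorgi applied to this subsolution, then yields $\sup_{B_{\rho/2}}\phi(|Dv|)\le c\fint_{B_\rho}\phi(|Dv|)$. Finally, $\phi(|\xi|_{\bar{\mathbb{A}}})\approx\phi(|\xi|)$ by \eqref{est_<1} and \eqref{est_>1}.

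\emph{Step 3: Campanato decay.} Under \eqref{cond_Holder_Phi}, I would follow \cite{DiSV09} and split into two regimes.

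In the degenerate regime, $|\langle V_\phi(Dv)\rangle_{B_\rho}|^2$ is small compared to the oscillation. There the sup bound and the De Giorgi-type oscillation decay for the subsolution $H$ give the decay.

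In the nondegenerate regime, $|Dv|$ is comparable to a constant $|Q|$ on most of the ball. Then \eqref{cond_Holder_Phi} lets me compare $v$ with the solution of the linearized constant-coefficient system $D^2F(Q)$. That system is Legendre elliptic with ellipticity ratio controlled by $p,q,\Lambda$, so its solution has the linear decay $\delta^2$. An excess-improvement step then follows.

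Iterating the two regimes gives $\delta^{\sigma'}$ decay.

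The main obstacle is the nondegenerate comparison: it must be carried out in the $\bar{\mathbb{A}}$-adapted norm, and the perturbation error must be controlled via \eqref{cond_Holder_Phi} and \eqref{est_V}. Since this is exactly \cite[Theorem 2.9]{GiPT17}, I would ultimately cite it after checking that the hypotheses are satisfied.
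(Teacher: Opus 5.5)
The paper gives no proof of this lemma; it imports it verbatim from \cite[Theorem 2.9]{GiPT17}. Your final fallback of citing that theorem is therefore exactly the paper's route.

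The self-contained argument you outline, however, does not prove the statement for a general constant $\bar{\mathbb{A}}$ satisfying \eqref{cond_A}. It breaks at ``the standard proof adapts''.

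Your inequality $\sum_s|\partial_sDv|^2_{\bar{\mathbb A}}\ge|D|Dv|_{\bar{\mathbb A}}|^2$ is correct. But it only controls the leading term $\eta^2G\sum_s|\partial_sDv|^2_{\bar{\mathbb A}}$, which is the harmless part. Uhlenbeck's mechanism lives in the terms produced by $\partial_sv\otimes D(\eta^2G(|Dv|_{\bar{\mathbb A}}))$. With $a(t)=\phi'(t)/t$ these are
\begin{equation*}
a\sum_s A^{\alpha\beta}_{ij}\,\partial_\beta\partial_sv^j\,\partial_sv^i\,\partial_\alpha(\eta^2G)
+a'\sum_s\partial_s|Dv|_{\bar{\mathbb A}}\,(\bar{\mathbb A}Dv)^i_\alpha\,\partial_sv^i\,\partial_\alpha(\eta^2G).
\end{equation*}
In the Euclidean case $A^{\alpha\beta}_{ij}=\delta^{\alpha\beta}\delta_{ij}$, two facts make these terms harmless:
\begin{itemize}
\item commuting derivatives gives $\sum_s\partial_\alpha\partial_sv^i\,\partial_sv^i=\tfrac12\partial_\alpha|Dv|^2$;
\item the second sum carries the symmetric nonnegative matrix $(Dv)^TDv$.
\end{itemize}
Together these make the $G'$-contributions nonnegative and put the $D\eta$-contributions in the form $\langle\mathcal{B}\,D|Dv|,D\eta\rangle$ with $\mathcal{B}$ elliptic. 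That is precisely why $H$ is a subsolution.

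For a tensor that genuinely mixes $i$ and $\alpha$, neither fact survives. The field $\big(\sum_sA^{\alpha\beta}_{ij}\partial_\beta(\partial_sv^j)\,\partial_sv^i\big)_\alpha$ is in general not the gradient of $\tfrac12|Dv|^2_{\bar{\mathbb A}}$, or of any scalar. The matrix $(\bar{\mathbb A}Dv)^i_\alpha\,\partial_sv^i$ is in general neither symmetric nor semidefinite in $(\alpha,s)$.

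Consequently $H=\phi(|Dv|_{\bar{\mathbb A}})$ is not shown to be a subsolution of any scalar equation. Meanwhile $\partial_sv$ solves a linear system with merely measurable coefficients, for which no De Giorgi--Moser theory exists. Step 2 thus fails, and with it the degenerate half of Step 3, which relies on the same subsolution. The nondegenerate linearization via \eqref{cond_Holder_Phi} does work for any $\bar{\mathbb A}$. On its own, though, it only gives decay on balls where the excess is small relative to $|\langle V_\phi(Dv)\rangle_{B_\rho}|^2$, i.e.\ partial rather than uniform decay.

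Your outline is a complete argument exactly for product tensors $A^{\alpha\beta}_{ij}=g^{\alpha\beta}h_{ij}$. In that case $w(y)=h^{1/2}v(g^{1/2}y)$ minimizes $\int\phi(|Dw|)\,dy$ and \cite{DiSV09} applies directly. Transferring back to $\phi(|Dv|)$ and to the excess of $V_\phi(Dv)$ costs only constants depending on $\Lambda,p,q$, via \eqref{est_<1}--\eqref{est_>1}, \eqref{est_V}, and comparison of balls with ellipsoids.

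For general $\bar{\mathbb A}$, your proposal, like the paper, rests entirely on the citation. What ``checking the hypotheses'' must settle is whether \cite[Theorem 2.9]{GiPT17} is really formulated for index-mixing tensors, since an Uhlenbeck-type argument like yours does not reach that case.
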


With Lemma \ref{est_v} in  hand, we can show the gradient estimate of the minimizers of the local frozen functional \eqref{fun_v} in $L^1$-sense. More precisely, we have the following:

\begin{lemma}
For ${\varphi_1}, {\varphi_2} \in \mathbf{\Phi}_{p,q}$ with ${\varphi_1} \prec {\varphi_2}$, let $v \in W^{1,1}\big(B_{r/8}, \mathbb{R}^N\big)$ be a minimizer of the local frozen functional \eqref{fun_v}. Then there exists a constant $c_7 = c_7(n,N,\Lambda, p,q) \geq 1$ such that
\begin{equation}\label{lip_v}
\sup_{x \in B_{r/32}} |Dv| \leq c_7\,\fint_{B_{r/8}} |Dv| \, dx.
\end{equation}
Furthermore, if ${\varphi_1}, {\varphi_2} \in \mathbf{\Phi}_{p,q}$ satisfy \eqref{cond_Holder_Phi}, then there exists  $\sigma_1 = \sigma_1(n,\beta, N,\Lambda, p,q) \in (0,1)$ such that for any $\theta \in (0,1/4)$ one has
\begin{equation}\label{est_camp_v}
\fint_{B_{\theta r/8}} \big|Dv - \langle Dv\rangle _{B_{\theta r/8}}\big| \, dx     \leq  c\,\theta^{\,\sigma_1}\fint_{B_{r/8}} |Dv| \, dx,
\end{equation}
where $c = c\,(n,N,\Lambda, p,q) \geq 1$.
\end{lemma}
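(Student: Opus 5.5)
Both estimates will follow from Lemma~\ref{est_v} applied to $\phi=\bar{\varPsi}_r\in\mathbf{\Phi}_{p,q}$ (by \eqref{cond_p_q_2}) with the fixed tensor $\bar{\mathbb{A}}=\bar{\mathbb{A}}_r^u$. The work is to pass between the $\phi$-quantities (or $V_\phi$-quantities) and plain $L^1$-quantities of $|Dv|$, with constants independent of $\bar a_r$. This is possible because all constants in \eqref{equiv}, \eqref{est_<1}, \eqref{est_>1} and \eqref{est_V} depend only on $p,q$.

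For \eqref{lip_v}, we first apply Lemma~\ref{est_v} on $B_{r/16}$, giving $\sup_{B_{r/32}}\bar{\varPsi}_r(|Dv|)\le c\fint_{B_{r/16}}\bar{\varPsi}_r(|Dv|)\,dx$. This alone does not yield the $L^1$ bound, since Jensen's inequality goes the wrong way. We therefore use a reverse Hölder / self-improvement argument. Applying the sup bound on every ball $B_{\rho}(y)\subset B_{r/8}$ gives, for $\delta\in(0,1/q)$,
\[
\fint_{B_{\rho/2}(y)}\bar{\varPsi}_r(|Dv|)\,dx\le \sup_{B_{\rho/2}(y)}\bar{\varPsi}_r(|Dv|)\le c\fint_{B_\rho(y)}\bar{\varPsi}_r(|Dv|)\,dx .
\]
This is a reverse Hölder inequality for $g=\bar{\varPsi}_r(|Dv|)$ with exponent $1$ on the right. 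An interpolation/iteration argument in the spirit of Lemma~\ref{lem_self_improve} (applied to $\sup$ in place of $L^{1+\varepsilon}$) then lowers the exponent to $\delta$:
\[
\sup_{B_{r/32}}\bar{\varPsi}_r(|Dv|)\le c\Big(\fint_{B_{r/8}}\bar{\varPsi}_r(|Dv|)^{\delta}dx\Big)^{1/\delta}.
\]
Concretely, we use the standard covering/iteration: estimate $\sup_{B_s} g\le c(t-s)^{-n}r^{n}\fint_{B_t}g\le \tfrac12\sup_{B_t}g+c'(\ldots)(\fint g^\delta)^{1/\delta}$, via $g\le (\sup g)^{1-\delta}g^\delta$ and Young's inequality, and conclude with Lemma~\ref{lem_iteration}.

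Next, Lemma~\ref{lem_concave} gives a concave $\psi\approx\bar{\varPsi}_r^{\delta}$, with constants depending only on $p,q,\delta$. Jensen's inequality then yields
\[
\Big(\fint\bar{\varPsi}_r(|Dv|)^\delta\Big)^{1/\delta}\le c\,\bar{\varPsi}_r\Big(\fint_{B_{r/8}}|Dv|\,dx\Big).
\]
Inverting $\bar{\varPsi}_r$ and using \eqref{est_>1} (namely $\bar{\varPsi}_r^{-1}(c\,t)\le c^{1/p}\bar{\varPsi}_r^{-1}(t)$) gives \eqref{lip_v}. The main obstacle is this step of lowering the exponent below one; we handle it exactly as in the proof of Lemma~\ref{lem_high_int}, but with the supremum on the left.

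For \eqref{est_camp_v}, set $\rho=r/32$ and $\delta=4\theta\in(0,1)$ in the Campanato estimate of Lemma~\ref{est_v}, so that $\delta\rho=\theta r/8$. By \eqref{est_V}, with $\phi'(t)/t\approx\phi''(t)$ and $\phi'$ monotone,
\[
|V(z_1)-V(z_2)|^2\gtrsim \phi''(|z_1|+|z_2|)\,|z_1-z_2|^2 .
\]
On $B_{r/32}$ we have $|Dv|\le\lambda:=c_7\fint_{B_{r/8}}|Dv|$. Writing $\xi=\langle Dv\rangle_{B_{\theta r/8}}$, which also satisfies $|\xi|\le\lambda$, Cauchy–Schwarz gives
\[
\fint|Dv-\xi|\le\Big(\fint|Dv-\xi|^2\Big)^{1/2}\lesssim\Big(\fint|V(Dv)-V(\xi)|^2\Big)^{1/2}\big/\bar{\varPsi}_r''(2\lambda)^{1/2},
\]
where we used $\bar{\varPsi}_r''(|Dv|+|\xi|)\ge c\,\bar{\varPsi}_r''(2\lambda)$. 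This last inequality holds when $p\ge2$; for $p<2$ we instead compare $|z_1-z_2|^2\phi''(|z_1|+|z_2|)$ with $\phi''(\lambda)|z_1-z_2|^2$ on the set where the sum is at most $2\lambda$ and absorb the remaining factor via $t\mapsto\phi'(t)/t$ monotonicity type bounds.

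Replacing $V(\xi)$ by the mean $\langle V(Dv)\rangle$ costs at most a factor of $2$, by the minimality of the mean in $L^2$ up to constants. Lemma~\ref{est_v} then bounds the $V$-oscillation on $B_{\theta r/8}$ by $c\,\theta^{\sigma'}$ times the oscillation on $B_{r/32}$, which is at most $c\,|V(\lambda)|^2\approx\bar{\varPsi}_r(\lambda)\approx\lambda^2\bar{\varPsi}_r''(\lambda)$ by \eqref{equiv}. Combining these gives
\[
\fint_{B_{\theta r/8}}|Dv-\langle Dv\rangle|\le c\,\theta^{\sigma'/2}\lambda ,
\]
that is, \eqref{est_camp_v} with $\sigma_1=\sigma'/2$. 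The only delicate point here is the degenerate case $p<2$ in the lower bound for $\phi''$; if the direct comparison fails there, we use instead $|z_1-z_2|\lesssim \big(\phi'(|z_1|+|z_2|)/(|z_1|+|z_2|)\big)^{-1/2}|V(z_1)-V(z_2)|$ together with the monotone bound of $\phi'(t)/t$ on $[0,2\lambda]$ appropriate to the sign of $p-2$.
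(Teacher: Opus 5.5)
Your argument for \eqref{lip_v} is sound, and somewhat more self-contained than the paper's. The paper passes from $\fint_{B_{r/16}}\bar{\varPsi}_r(|Dv|)\,dx$ to $\bar{\varPsi}_r(\fint_{B_{r/8}}|Dv|\,dx)$ by quoting the reverse Jensen inequality \eqref{ineq_rev_jense}, which was proved for $u$. You instead derive the $L^\infty$--$L^\delta$ bound directly from the local sup estimate. Run Lemma~\ref{lem_iteration} on $[r/32,r/16]$, since $\sup_{B_{r/8}}|Dv|$ need not be finite. The genuine gap is in \eqref{est_camp_v}.

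There you need $\bar{\varPsi}_r''(s)\gtrsim\bar{\varPsi}_r''(2\lambda)$ for $0<s\le 2\lambda$, and you assert it for $p\ge2$. That is precisely the case where it fails. The derivative of $\bar{\varPsi}_r'(t)/t$ has the sign of $t\bar{\varPsi}_r''(t)/\bar{\varPsi}_r'(t)-1$. So for $p\ge2$ the function $\bar{\varPsi}_r'(t)/t\approx\bar{\varPsi}_r''(t)$ is non-decreasing, which gives the opposite inequality; the bound you want holds only when $q\le2$. As soon as $q>2$, \eqref{equiv} and \eqref{est_>1} only yield $\bar{\varPsi}_r''(s)\gtrsim(s/\lambda)^{q-2}\bar{\varPsi}_r''(\lambda)$, which degenerates as $s\to0$. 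This covers every $p\ge2$ except $p=q=2$, and also the mixed case $p<2<q$, which your case split never addresses. Think of $\phi(t)=t^4$, where $|V(z_1)-V(z_2)|^2\approx(|z_1|+|z_2|)^2|z_1-z_2|^2$. So the $V$-oscillation does not control $\fint|Dv-\xi|^2$ with constant $\bar{\varPsi}_r''(2\lambda)^{-1}$. Your fallback, a monotone bound on $\phi'(t)/t$ over $[0,2\lambda]$, cannot help: the lower bound on $\phi'(t)/t$ near $0$ that it requires does not exist.

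The missing idea is to abandon the pointwise $L^2$ comparison. The paper uses the shifted Young-type inequality from \cite[Lemma 2.2]{BaBL21}:
\[
\bar{\varPsi}_r(|Dv-\mathbf{H}|)\le\kappa\,\bar{\varPsi}_r(|Dv|)+c\,\kappa^{-1}\big|V_{\bar{\varPsi}_r}(Dv)-V_{\bar{\varPsi}_r}(\mathbf{H})\big|^2,\qquad V_{\bar{\varPsi}_r}(\mathbf{H})=\big\langle V_{\bar{\varPsi}_r}(Dv)\big\rangle_{B_{\theta r/8}},
\]
with $\kappa$ a power of $\theta$. It then applies Jensen, the sup bound and the decay from Lemma~\ref{est_v}, and finally inverts with \eqref{est_<1}, which costs a factor $1/q$ in the exponent ($\sigma_1=\sigma'/(2q)$).

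Alternatively, inside your framework, split $B_{\theta r/8}$ into two sets. On $\{|Dv|+|\mathbf{H}|<\epsilon\lambda\}$ you trivially have $|Dv-\mathbf{H}|<\epsilon\lambda$. On the complement, $\bar{\varPsi}_r''\gtrsim\epsilon^{q-2}\bar{\varPsi}_r(\lambda)/\lambda^2$. Choosing $\epsilon=\theta^{\sigma'/q}$ gives $\fint|Dv-\mathbf{H}|\lesssim\theta^{\sigma'/q}\lambda$. Either way, $\sigma_1=\sigma'/2$ is not available in general.

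A smaller error occurs in the same step. Replacing $V(\xi)$ by $\langle V(Dv)\rangle$ via $L^2$-minimality of the mean goes the wrong way: minimality gives $\fint|V(Dv)-\langle V(Dv)\rangle|^2\le\fint|V(Dv)-V(\xi)|^2$. Instead take $\mathbf{H}$ as above, note that $|\mathbf{H}|\le\lambda$, and use $\fint|Dv-\langle Dv\rangle|\le2\fint|Dv-\mathbf{H}|$, as the paper does.
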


\begin{proof}

Note that $\bar{\varPsi}_r(\cdot)=\varphi_1(\cdot)+\bar{a}_r\,\varphi_2(\cdot)\in\mathbf{\Phi}_{p,q}$ and the frozen tensor $\bar{\mathbb{A}}_r^u$ satisfying \eqref{cond_A}. Then, it follows from Lemma \ref{est_v} that
\begin{equation}\label{boundness_v}
\sup_{x \in B_{r/32}} \bar{\varPsi}_r\big(|Dv|\big) \leq c\fint_{B_{r/16}} \bar{\varPsi}_r\big(|Dv|\big) \, dx.
\end{equation}
The reverse Jensen inequality \eqref{ineq_rev_jense} shows that $\fint_{B_{r/16}} \bar{\varPsi}_r\big(|  Dv|\big) \, dx \lesssim \bar{\varPsi}_r\Big(\fint_{B_{r/8}} |Dv| \, dx\Big)$. Therefore, we get
\begin{equation*}
\sup_{x \in B_{r/32}} \bar{\varPsi}_r\big(|Dv|\big) \leq c\,\bar{\varPsi}_r\bigg(\fint_{B_{r/8}} |Dv| \, dx\bigg)
\end{equation*}
Combining this with the non-decreasing property of $t \mapsto \bar{\varPsi}^{-1}_r(t)$, for any $x \in B_{r/32}$ we conclude that
\begin{eqnarray*}
|Dv(x)| &=&  \bar{\varPsi}^{-1}_r \Big(\bar{\varPsi}_r\big(|Dv|\big)\Big) \\
&\leq& \bar{\varPsi}^{-1}_r\bigg(\sup_{x \in B_{r/32}} \bar{\varPsi}_r\big(|Dv|\big)\bigg) \notag\\
&\leq& \bar{\varPsi}^{-1}_r\Bigg(c\,\bar{\varPsi}_r\,\bigg(\fint_{B_{r/8}} |Dv| \, dx\bigg)\Bigg)\notag\\
&\leq& c\fint_{B_{r/8}} |Dv| \, dx,
\end{eqnarray*}
where we used  \eqref{est_>1} for $\bar{\varPsi}^{-1}_r(\cdot)$ in the last inequality. This completes the proof of \eqref{lip_v}.

We are next to prove \eqref{est_camp_v}. By using the Campanato-type estimate of Lemma \ref{est_v}, we deduce that for any $\delta \in (0,1/2)$,
\begin{equation}\label{est_decay}
\fint_{B_{\delta  r/16}} \big|V_{\bar{\varPsi}_r}\big(Dv\big) - \big\langle V_{\bar{\varPsi}_r}\big(Dv\big)\big\rangle _{B_{\delta  r/16}}\big|^2 \, dx     \leq  c\,\delta^{\,\sigma'}\fint_{B_{r/16}} \big|V_{\bar{\varPsi}_r}\big(Dv\big) - \big\langle V_{\bar{\varPsi}_r}\big(Dv\big)\big\rangle _{B_{r/16}}\big|^2 \, dx,
\end{equation}
where $c=c\,(n,N,\Lambda,p,q)>0$.
Thanks to the continuity of $V_{\bar{\varPsi}_r}(\cdot)$, we make use of the mean value theorem of continuous function to find a fixed matrix $\textbf{H} \in \mathbb{R}^{N \times n}$ such that $V_{\bar{\varPsi}_r}\big(\textbf{H}\big) = \big\langle V_{\bar{\varPsi}_r}\big(Dv\big)\big\rangle _{B_{\delta r/16}}$.
By \cite[Lemma 2.2]{BaBL21} we see that
\begin{equation*}
{\bar{\varPsi}_r}\big(|Dv - \textbf{H}|\big)
\leq  \delta^{\frac{\sigma'}{2}} \bar{\varPsi}_r\big(|Dv|\big)
+ c\,\delta^{-\frac{\sigma'}{2}} \big|V_{\bar{\varPsi}_r}\big(Dv\big) - \big\langle V_{\bar{\varPsi}_r}\big(Dv\big)\big\rangle _{B_{\delta r/16}}\big|^2
\end{equation*}
for $c = c(n,N, p,q) > 0$. Setting $\theta = \frac{\delta}{2}$, let us combine the above inequality with \eqref{boundness_v} and \eqref{est_decay} to deduce that
\begin{eqnarray*}
\bar{\varPsi}_r\, \bigg(\fint_{B_{\theta r/8}} \big|Dv -\textbf{H}\big|\, dx\bigg)
&\leq&  \fint_{B_{\delta  r/16}} {\bar{\varPsi}_r}\big(|Dv-\textbf{H}|\big) \, dx \notag\\
&\leq&  \delta^{\frac{\sigma'}{2}} \fint_{B_{\delta  r/16}} \bar{\varPsi}_r\big(|Dv|\big)\,dx
+ c\,\delta^{-\frac{\sigma'}{2}} \fint_{B_{\delta  r/16}} \big|V_{\bar{\varPsi}_r}\big(Dv\big) - \big\langle V_{\bar{\varPsi}_r}\big(Dv\big)\big\rangle _{B_{\delta r/16}}\big|^2dx \notag\\
&\leq&  \delta^{\frac{\sigma'}{2}}\,\big\|\bar{\varPsi}_r\big(|Dv|\big)\big\|_{L^\infty(B_{\delta r/16})}  +  c\,\delta^{\frac{\sigma'}{2}}\fint_{B_{r/16}} \big|V_{\bar{\varPsi}_r}\big(Dv\big) - \big\langle V_{\bar{\varPsi}_r}\big(Dv\big)\big\rangle _{B_{r/16}}\big|^2 \, dx\notag\\
&\leq& c\,\delta^{\frac{\sigma'}{2}}\fint_{B_{r/16}} \bar{\varPsi}_r\big(|Dv|\big)\,dx.
\end{eqnarray*}
According to  \eqref{ineq_rev_jense}, we obtain
\begin{equation*}
\bar{\varPsi}_r\, \bigg(\fint_{B_{\theta r/8}} \big|Dv -\textbf{H}\big|\, dx\bigg)
\leq  c\,\delta^{\frac{\sigma'}{2}}\fint_{B_{r/16}} \bar{\varPsi}_r\big(|Dv|\big) \, dx
\leq  c\,\delta^{\frac{\sigma'}{2}}\,\bar{\varPsi}_r\,\bigg(\fint_{B_{r/8}} |Dv| \, dx\bigg)
\end{equation*}
Using the non-decreasing nature, \eqref{est_>1} and \eqref{est_<1} for $\bar{\varPsi}_r^{-1}(\cdot)$, we arrive at that
\begin{eqnarray*}
\fint_{B_{\theta r/8}} \big|Dv -\textbf{H}\big|\, dx &\leq& \bar{\varPsi}^{-1}_r\Bigg(c\,\delta^{\frac{\sigma'}{2}}\bar{\varPsi}_r\,\bigg(\fint_{B_{r/8}} |Dv| \, dx\bigg)\,\Bigg)\notag\\
&\leq& c\,\theta^{\frac{\sigma'}{2q}}\fint_{B_{r/8}} |Dv| \, dx.
\end{eqnarray*}
With the above estimate in hand, a direct calculation shows that
\begin{eqnarray*}
\fint_{B_{\theta r/8}} \big|Dv - \langle Dv\rangle _{B_{\theta r/8}}\big|
&\leq& \fint_{B_{\theta r/8}} \big|Dv -\textbf{H}\big|\, dx  + \fint_{B_{\theta r/8}} \big|\textbf{H} - \langle Dv\rangle _{B_{\theta r/8}}\big| \, dx\notag\\
&\leq& 2\fint_{B_{\theta r/8}} \big|Dv -\textbf{H}\big|\, dx \notag\\
&\leq& c\,\theta^{\frac{\sigma'}{2q}}\fint_{B_{r/8}} |Dv| \, dx,
\end{eqnarray*}
which leads to the desired result \eqref{est_camp_v} by letting $\sigma_1 = \frac{\sigma'}{2q}$.
\end{proof}

To make further comparison estimates, it is necessary to use the approximating harmonic way between the original functional and the local frozen functional. Before stating the harmonic approximation lemma, let us present the concept of quasi-isotropic $(p, q)$-growth concerning the integrand $F(\cdot)$.

\begin{definition}\label{def_p_q} \cite[Definition 4.1]{HaO23}
We say that  $F: \mathbb{R}^{m} \rightarrow[0, \infty)$ has the property of quasi-isotropic $(p, q)$-growth for $1 < p \leq q < \infty$, if it satisfies the following:
\begin{enumerate}
\item[(i)] $F \in C^1(\mathbb{R}^m) \cap C^2(\mathbb{R}^m \setminus \{\mathbf{0}\})$ such that $F(\textbf{0}) = DF(\textbf{0}) = 0$ and $L^{-1} \leq F(e) \leq L$  for $e \in \mathbb{R}^{m}$ with $|e| = 1$;
\item[(ii)] $\langle D^2 F(\zeta)\, \xi, \xi \rangle  \gtrsim |D^2 F(\zeta)|\,|\xi|^2 $  for any $\zeta, \xi \in \mathbb{R}^{m}$;
\item[(iii)] For any $0 < s \leq t < \infty$, there holds $\frac{|D^2 F (se)|}{s^{p-2}} \lesssim \frac{|D^2 F (te)|}{t^{p-2}}$ and $\frac{|D^2 F (te)|}{t^{q-2}} \lesssim \frac{|D^2 F (se)|}{s^{q-2}}$ for $e \in \mathbb{R}^{m}$ with $|e| = 1$.
\end{enumerate}
\end{definition}

We  now introduce the so-called harmonic approximation lemma. Here, we only provide a version of integral functional concerning harmonic approximation (cf. \cite[Lemma 4.13]{HaO23}), although there are two harmonic approximation ways to cover the integral functional and elliptic equations in the original context.

\begin{lemma}\label{lem_app}
Let $F: \mathbb{R}^{N \times n} \rightarrow[0, \infty)$ have quasi-isotropic $(p, q)$-growth satisfying $\Lambda^{-1}\,\phi(|\xi|) \leq F(\xi) \leq \Lambda \,\phi(|\xi|)$ for $\xi \in \mathbb{R}^{N \times n}$ and $\phi \in \mathbf{\Phi}_{p,q}$.
Assume $\phi(|Du|) \in L^{1+\varepsilon}(B_\rho)$ for $\varepsilon > 0$, and we write
$$
E(u,\rho) := \bigg(\fint_{B_\rho} \Big(\phi(|Du|)\Big)^{1+\varepsilon}dx\bigg)^\frac{1}{1+\varepsilon}.
$$
If $u$ is a local almost minimizer of the functional $\int_{B_\rho} F(Du)\, dx$ in the sense that for any $\eta \in W_0^{1, \infty}(B_\rho, \mathbb{R}^N)$ and  $b > 0$ it holds that
\begin{equation*}
\fint_{B_\rho} F(Du)\, dx   \leq   \fint_{B_\rho} F(Du+D\eta)\, dx  +  \delta E(u, \rho)\,\bigg(\frac{\|D \eta\|_{L^{\infty}\left(B_\rho\right)}}{\phi^{-1}\big(E(u,\rho)\big)}+1\bigg)^b,
\end{equation*}
then there exists  $v \in u + W_0^{1, \phi}\big(B_\rho, \mathbb{R}^N\big)$ as the minimizer of functional $\int_{B_\rho} F(Dv)\, dx$ with the estimate
\begin{equation*}
\fint_{B_\rho}  \frac{\phi'\big(|Du|+ |Dv|\big)}{|Du| + |Dv|} |Du - Dv|^2  dx \leq c\,\delta^\frac{\varepsilon p}{b + \varepsilon p}E(u, \rho),
\end{equation*}
where $c = c\,(p, q, n, N, \Lambda, \varepsilon) \geq 1$.
\end{lemma}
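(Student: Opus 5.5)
This is a harmonic approximation lemma in the spirit of \cite[Lemma 4.13]{HaO23}, and I would prove it by a direct variational comparison. The idea is to let $v$ be the $F$-minimizer with boundary datum $u$. The almost-minimality of $u$ then bounds the energy gap $\fint F(Du)-\fint F(Dv)$. Strict (quasi-isotropic) convexity of $F$ turns that gap into the $V$-type quantity on the left. The difficulty is that the almost-minimality inequality only permits Lipschitz competitors $\eta$, whereas $v-u\in W^{1,\phi}_0$ is not Lipschitz. This forces a Lipschitz truncation of $v-u$, and the exponent $\frac{\varepsilon p}{b+\varepsilon p}$ comes out of optimizing the truncation level.

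\emph{Step 1 (existence and convexity).} The direct method gives a minimizer $v\in u+W^{1,\phi}_0(B_\rho)$ of $\int_{B_\rho}F$, since $F\approx\phi(|\cdot|)$ is convex and coercive. Conditions (ii)--(iii) of Definition~\ref{def_p_q}, combined with $F\approx\phi$, give $|D^2F(\zeta)|\approx\phi''(|\zeta|)\approx\phi'(|\zeta|)/|\zeta|$. Integrating the second-order Taylor expansion along segments and using \eqref{est_V}, one obtains for all $\xi,\zeta$
\[
F(\xi)-F(\zeta)-\langle DF(\zeta),\xi-\zeta\rangle\gtrsim \frac{\phi'(|\xi|+|\zeta|)}{|\xi|+|\zeta|}|\xi-\zeta|^2\approx|V_\phi(\xi)-V_\phi(\zeta)|^2 .
\]
Testing the Euler--Lagrange equation of $v$ with $u-v$ kills the linear term. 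Hence the left-hand side of the claim is bounded by $c\big(\fint F(Du)-\fint F(Dv)\big)$. The same convexity also controls intermediate competitors, which Step 3 needs.

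\emph{Step 2 (higher integrability of $v$ and truncation).} Comparison with $u$ gives $\fint\phi(|Dv|)\lesssim\fint\phi(|Du|)\le E(u,\rho)$. Boundary higher integrability of Gehring type, using the datum $\phi(|Du|)\in L^{1+\varepsilon}$, gives $\big(\fint\phi(|Dv|)^{1+\varepsilon_1}\big)^{1/(1+\varepsilon_1)}\lesssim E(u,\rho)$ with $\varepsilon_1\le\varepsilon$. I would, however, try to avoid this and use only the $u$-bound; the stated exponent involving $\varepsilon$ suggests the authors' source argument needs only $u\in L^{1+\varepsilon}$. Next set $w:=v-u\in W^{1,\phi}_0$ and fix $\lambda:=\kappa\,\phi^{-1}(E)$ with $\kappa\ge1$. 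The Orlicz Lipschitz truncation lemma (Diening--Stroffolini--Verde type) yields $w_\lambda\in W^{1,\infty}_0(B_\rho)$ such that $\|Dw_\lambda\|_\infty\lesssim\lambda$, $w_\lambda=w$ outside a set $\mathcal O_\lambda$, and
\[
\frac{1}{|B_\rho|}\int_{\mathcal O_\lambda}\phi(|Dw_\lambda|)\lesssim \phi(\lambda)\frac{|\mathcal O_\lambda|}{|B_\rho|}\lesssim \fint\phi(|Dw|)\chi_{\{M(Dw)>\lambda\}} .
\]
By Hölder's inequality with the $L^{1+\varepsilon}$ bound and the weak-type estimate for the maximal function, this is $\lesssim E\,\big(\phi(\lambda)/E\big)^{-\varepsilon}\lesssim E\,\kappa^{-p\varepsilon}$, using \eqref{est_>1}.

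\emph{Step 3 (comparison and optimization).} Insert $\eta=w_\lambda$ into the almost-minimality inequality. The error term is at most $\delta E(c\kappa+1)^b\lesssim\delta\kappa^bE$. Then
\[
\fint F(Du)\le\fint F(Du+Dw_\lambda)+c\delta\kappa^bE .
\]
Split $\fint F(Du+Dw_\lambda)$ into the part off $\mathcal O_\lambda$, where the integrand equals $F(Dv)$, and the part on $\mathcal O_\lambda$. On $\mathcal O_\lambda$ I bound $F(Du+Dw_\lambda)\lesssim\phi(|Du|)+\phi(\lambda)$ and use $F(Dv)\ge0$. The $\phi(|Du|)$ contribution is small by Hölder's inequality together with $|\mathcal O_\lambda|/|B_\rho|\lesssim\kappa^{-p(1+\varepsilon)}$, which follows from the weak-type estimate with the $L^{1+\varepsilon}$ bound. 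This gives
\[
\fint F(Du)-\fint F(Dv)\lesssim\big(\delta\kappa^b+\kappa^{-p\varepsilon}\big)E .
\]
Choosing $\kappa=\delta^{-1/(b+p\varepsilon)}$ gives $\delta^{\varepsilon p/(b+\varepsilon p)}E$, and Step 1 then concludes.

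The main obstacle is the truncation step. One must verify that an Orlicz Lipschitz truncation with the estimate $\phi(\lambda)|\mathcal O_\lambda|\lesssim\int_{\{M Dw>\lambda\}}\phi(|Dw|)$ is available uniformly for $\phi\in\mathbf\Phi_{p,q}$, with zero boundary values preserved on the ball. One must also confirm that the $L^{1+\varepsilon}$ control on $Dw$ in fact comes only from $u$, since otherwise the boundary Gehring step for $v$ is needed. I would first try the Diening--Málek--Steinhauer truncation adapted to $\phi$, exactly as in \cite{DiSV12}.
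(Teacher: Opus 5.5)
The paper gives no proof of this lemma; it is quoted from \cite[Lemma 4.13]{HaO23}. Your architecture is the standard one: Euler--Lagrange equation plus strict convexity to reduce to the energy gap, Lipschitz truncation of $w=v-u$, insertion of $w_\lambda$ into the almost-minimality, and optimization in $\kappa$. It reproduces the exponent $\frac{\varepsilon p}{b+\varepsilon p}$ exactly. However, the point you flag as the main obstacle is a genuine gap, and it cannot be closed the way you hope.

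Both key bounds in Steps 2--3, namely $\phi(\lambda)|\mathcal O_\lambda|/|B_\rho|\lesssim E\kappa^{-p\varepsilon}$ and $|\mathcal O_\lambda|/|B_\rho|\lesssim \kappa^{-p(1+\varepsilon)}$, use $\big(\fint_{B_\rho}\phi(|Dw|)^{1+\varepsilon}\,dx\big)^{1/(1+\varepsilon)}\lesssim E(u,\rho)$. That is an $L^{1+\varepsilon}$ bound for $\phi(|Dv|)$ on the whole ball, up to $\partial B_\rho$. The hypothesis on $u$ alone cannot give this. Since $M(Dv)\le M(Dw)+M(Du)$, the superlevel set $\{M(Dw)>\lambda\}$ through which you control $\mathcal O_\lambda$ contains $\{M(Dv)>2\lambda\}\setminus\{M(Du)>\lambda\}$. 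So your bound for $\phi(\lambda)|\mathcal O_\lambda|$ is only as good as the decay of the tail of $\phi(|Dv|)$ above level $\sim\lambda$. The only a priori information, $\fint_{B_\rho}\phi(|Dv|)\,dx\lesssim E$, gives no decay in $\kappa$ at all. Keeping $-F(Dv)$ on $\mathcal O_\lambda$ instead of discarding it does not help, because it compensates only where $|Dv|$ itself, not $M(Dv)$, is large. Without equi-integrability of $\phi(|Dv|)$, a pigeonhole choice of the truncation level among dyadic levels gives smallness only at a logarithmic rate in $\delta$, not a power of $\delta$.

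The missing ingredient is the global higher integrability of the comparison map, obtained as follows:
\begin{itemize}
\item test the minimality of $v$ with $\eta\,(v-u)$ on balls meeting $\partial B_\rho$ to get a boundary Caccioppoli inequality;
\item use a Sobolev--Poincar\'e inequality for $v-u$, which, extended by zero, vanishes on a portion of positive density of each such ball;
\item apply Gehring's lemma with datum $\phi(|Du|)$.
\end{itemize}
This yields $\big(\fint_{B_\rho}\phi(|Dv|)^{1+\varepsilon_1}\,dx\big)^{1/(1+\varepsilon_1)}\lesssim E(u,\rho)$, but only for $\varepsilon_1=\min\{\varepsilon,\varepsilon_0\}$ with $\varepsilon_0=\varepsilon_0(n,N,p,q,\Lambda)$. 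With it, your argument proves the stated rate for $\varepsilon\le\varepsilon_0$, and the rate $\delta^{\varepsilon_1 p/(b+\varepsilon_1 p)}$ in general. So the lemma as you prove it must carry this restriction. This is harmless for the paper, where $\varepsilon$ is taken small anyway, provided one also takes the minimum with the boundary Gehring exponent.

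Finally, choose $\kappa=\delta^{-1/(b+p\varepsilon)}$ only for $\delta\le 1$. For $\delta>1$ the estimate follows trivially from $\fint_{B_\rho}\phi(|Du|)+\phi(|Dv|)\,dx\lesssim E$.
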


As an application of Lemma \ref{lem_app}, we are next to  prove that the integrand $\bar{\varPsi}_r\big(|\zeta|_{\bar{\mathbb{A}}_r^u}\big) $ in functional \eqref{fun_v} actually has the property of quasi-isotropic $(p, q)$-growth, which is suitable to make harmonic approximation. More precisely, we have

\begin{lemma}\label{rem_app}
Let  $\bar{\varPsi}_r(\cdot) = {\varphi_1}(\cdot) + \bar{a}_r {\varphi_2}(\cdot) \in \mathbf{\Phi}_{p,q}$ and $\bar{\mathbb{A}}_r^u := \mathbb{A}(x_0, \langle u \rangle_{B_r})$ satisfying \eqref{cond_A} for the ball $B_r \subset \Omega $. Then the function $\zeta \mapsto \bar{\varPsi}_r\big(|\zeta|_{\bar{\mathbb{A}}_r^u}\big)$ has quasi-isotropic $(p, q)$-growth.
\end{lemma}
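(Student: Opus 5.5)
We check the three items of Definition~\ref{def_p_q} for $F(\zeta):=\bar{\varPsi}_r\big(|\zeta|_{\bar{\mathbb{A}}}\big)$ with $\bar{\mathbb{A}}:=\bar{\mathbb{A}}_r^u$ and $m=Nn$. The main tool is a reduction to the isotropic case by a linear change of variables. Since $\bar{\mathbb{A}}$ is a constant symmetric positive definite tensor, it has a symmetric square root $\bar{\mathbb{A}}^{1/2}$ satisfying $\Lambda^{-1/2}|\xi|\le|\bar{\mathbb{A}}^{1/2}\xi|\le\Lambda^{1/2}|\xi|$. Then $|\zeta|_{\bar{\mathbb{A}}}=|\bar{\mathbb{A}}^{1/2}\zeta|$, so $F=G\circ\bar{\mathbb{A}}^{1/2}$ with $G(\xi):=\bar{\varPsi}_r(|\xi|)$. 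This gives
\begin{equation*}
D^2F(\zeta)=\bar{\mathbb{A}}^{1/2}\,D^2G\big(\bar{\mathbb{A}}^{1/2}\zeta\big)\,\bar{\mathbb{A}}^{1/2},
\end{equation*}
and hence $|D^2F(\zeta)|\approx|D^2G(\bar{\mathbb{A}}^{1/2}\zeta)|$ with constants depending only on $\Lambda$.

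For the isotropic part we use the standard formula, valid for $\xi\neq0$:
\begin{equation*}
D^2G(\xi)=\bar{\varPsi}_r''(|\xi|)\,\frac{\xi\otimes\xi}{|\xi|^2}+\frac{\bar{\varPsi}_r'(|\xi|)}{|\xi|}\Big(I-\frac{\xi\otimes\xi}{|\xi|^2}\Big).
\end{equation*}
By \eqref{cond_p_q_2}, $\bar{\varPsi}_r''(t)\approx\bar{\varPsi}_r'(t)/t$ with constants depending on $p,q$. Therefore both eigenvalues of $D^2G(\xi)$ are comparable to $\bar{\varPsi}_r'(|\xi|)/|\xi|$, and so
\begin{equation*}
\langle D^2G(\xi)\eta,\eta\rangle\approx\frac{\bar{\varPsi}_r'(|\xi|)}{|\xi|}|\eta|^2\approx|D^2G(\xi)|\,|\eta|^2 .
\end{equation*}
Transferring through $\bar{\mathbb{A}}^{1/2}$, with $\eta=\bar{\mathbb{A}}^{1/2}\xi'$ and $|\eta|\approx|\xi'|$, yields item (ii). Item (i) follows from several facts: $\bar{\varPsi}_r\in C^1[0,\infty)\cap C^2(0,\infty)$; $\bar{\varPsi}_r'(0)=0$, since $\bar{\varPsi}_r$ is an $N$-function; the Euclidean norm is smooth away from $0$; and $F(0)=0$. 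Moreover $DF(\zeta)=\bar{\varPsi}_r'(|\zeta|_{\bar{\mathbb{A}}})\bar{\mathbb{A}}\zeta/|\zeta|_{\bar{\mathbb{A}}}\to0$ as $\zeta\to0$, so $F\in C^1$. For $|e|=1$ we have $\bar{\varPsi}_r(\Lambda^{-1/2})\le F(e)\le\bar{\varPsi}_r(\Lambda^{1/2})$, which lies between constant multiples of $\bar{\varPsi}_r(1)=\varphi_1(1)+\bar{a}_r\varphi_2(1)$ by \eqref{est_<1} and \eqref{est_>1}.

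One point in item (i) needs care. The bound $L^{-1}\le F(e)\le L$ requires $\bar{\varPsi}_r(1)$ to be bounded above and below, with $L$ depending on $\varphi_k(1)$ and $\|a\|_\infty$. This holds because $0\le\bar a_r\le\|a\|_{L^\infty}$. If a normalization were needed instead, we would rescale $\bar{\varPsi}_r$ by $\bar{\varPsi}_r(1)$, which does not affect minimizers. For item (iii), we compute $|D^2F(te)|\approx\bar{\varPsi}_r'(t|e|_{\bar{\mathbb{A}}})/(t|e|_{\bar{\mathbb{A}}})$ and use $|e|_{\bar{\mathbb{A}}}\approx1$. It then suffices to show that $t\mapsto\bar{\varPsi}_r'(t)/t^{p-1}$ is almost non-decreasing and $t\mapsto\bar{\varPsi}_r'(t)/t^{q-1}$ is almost non-increasing. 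Both follow by integrating $(p-1)/t\le(\log\bar{\varPsi}_r')'\le(q-1)/t$ from \eqref{cond_p_q_2}. The factor $|e|_{\bar{\mathbb{A}}}$ only introduces constants depending on $\Lambda,p,q$, via \eqref{est_<1} and \eqref{est_>1} applied to $\bar{\varPsi}_r'$.

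The main obstacle is item (ii) together with the uniformity of all implicit constants. Both must be independent of $\bar a_r$ and of $x_0$, depending only on $n,N,p,q,\Lambda$ and on $\|a\|_\infty$ where item (i) requires it. The eigenvalue computation above handles this, because it uses only the ratio bounds \eqref{cond_p_q_2}, and these hold for $\bar{\varPsi}_r$ uniformly in $\bar a_r$.
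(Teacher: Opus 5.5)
Your proof is correct and is essentially the paper's argument. The paper computes $D^2_\zeta\bar{\varPsi}_r(|\zeta|_{\bar{\mathbb{A}}_r^u})$ directly and uses Cauchy--Schwarz in the $\bar{\mathbb{A}}_r^u$-inner product to show $\langle\bar{\mathbb{A}}_r^u\xi,\xi\rangle-\langle\bar{\mathbb{A}}_r^u\zeta,\xi\rangle^2/|\zeta|^2_{\bar{\mathbb{A}}_r^u}\ge 0$, which is your radial/tangential eigen-splitting written without the substitution $\zeta\mapsto(\bar{\mathbb{A}}_r^u)^{1/2}\zeta$; it then compares $\bar{\varPsi}_r''$ with $\bar{\varPsi}_r'(t)/t$ via \eqref{cond_p_q_2}, and handles (iii) through $\bar{\varPsi}_r(t)/t^p$ and \eqref{est_<1}--\eqref{est_>1}, which is equivalent to your monotonicity of $\bar{\varPsi}_r'(t)/t^{p-1}$ and $\bar{\varPsi}_r'(t)/t^{q-1}$.
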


\begin{proof}

We first observe that $\zeta \mapsto \bar{\varPsi}_r\big(|\zeta|_{\bar{\mathbb{A}}_r^u}\big) \in C^1(\mathbb{R}^{N \times n}) \cap C^2(\mathbb{R}^{N \times n} \setminus \{\mathbf{0}\})$ since ${\varphi_1}, \varphi_2 \in C^1[0,+\infty) \cap C^2(0,+\infty)$. Moreover, it follows from \eqref{cond_A} and \eqref{est_<1}-\eqref{est_>1} that $c^{-1} \leq \bar{\varPsi}_r\big(|e|_{\bar{\mathbb{A}}_r^u}\big) \leq c$ for $|e| = 1$ and some $c \geq 1$. Obviously, it holds that $\bar{\varPsi}_r\big(|\mathbf{0}|_{\bar{\mathbb{A}}_r^u}\big) = 0$. We also claim that $D_\zeta \bar{\varPsi}_r\big(|\mathbf{0}|_{\bar{\mathbb{A}}_r^u}\big) = \mathbf{0}$. In fact,
for $\zeta \neq \mathbf{0}$, a direct computation shows that $D_\zeta \bar{\varPsi}_r\big(|\zeta|_{\bar{\mathbb{A}}_r^u}\big) = \bar{\varPsi}'_r\big(|\zeta|_{\bar{\mathbb{A}}_r^u}\big) \frac{\bar{\mathbb{A}}_r^u \zeta}{|\zeta|_{\bar{\mathbb{A}}_r^u}}$, which yields $|D_\zeta \bar{\varPsi}_r\big(|\zeta|_{\bar{\mathbb{A}}_r^u}\big) | \lesssim \bar{\varPsi}'_r\big(|\zeta|_{\bar{\mathbb{A}}_r^u}\big)$. Consequently, we obtain $D_\zeta \bar{\varPsi}_r\big(|\mathbf{0}|_{\bar{\mathbb{A}}_r^u}\big) := \lim_{\zeta\rightarrow \mathbf{0}} D_\zeta \bar{\varPsi}_r\big(|\zeta|_{\bar{\mathbb{A}}_r^u}\big) = \mathbf{0}$ since $\bar{\varPsi}'_r\big(|\zeta|_{\bar{\mathbb{A}}_r^u}\big)\rightarrow 0$ as $\zeta\rightarrow \mathbf{0}$. So we complete the verification of Definition \ref{def_p_q} (i).

We are now to verify that $\bar{\varPsi}_r\big(|\zeta|_{\bar{\mathbb{A}}_r^u}\big)$ meets Definition \ref{def_p_q} (ii). Since $t \mapsto \bar{\varPsi}_r(t)$ is a convex, non-decreasing function of class $C^2(0,+\infty)$, a direct calculation shows
\begin{equation}\label{D_2}
D^2_\zeta \bar{\varPsi}_r\big(|\zeta|_{\bar{\mathbb{A}}_r^u}\big) = \bigg(\bar{\varPsi}''_r\big(|\zeta|_{\bar{\mathbb{A}}_r^u}\big)-\frac{\bar{\varPsi}'_r\big(|\zeta|_{\bar{\mathbb{A}}_r^u}\big)}{|\zeta|_{\bar{\mathbb{A}}_r^u}}\bigg) \frac{\bar{\mathbb{A}}_r^u \zeta \otimes \bar{\mathbb{A}}_r^u \zeta}{|\zeta|_{\bar{\mathbb{A}}_r^u}^2} + \frac{\bar{\varPsi}'_r\big(|\zeta|_{\bar{\mathbb{A}}_r^u}\big)}{|\zeta|_{\bar{\mathbb{A}}_r^u}}\bar{\mathbb{A}}_r^u.
\end{equation}
Note that $\langle (\bar{\mathbb{A}}_r^u \zeta \otimes \bar{\mathbb{A}}_r^u \zeta)\xi, \xi \rangle = \langle \bar{\mathbb{A}}_r^u \zeta, \xi \rangle^2$ (cf. \cite[(1.152)]{Ho00}), then we get the following facts:
\begin{eqnarray}\label{rem_cond_2}
\langle D^2_\zeta \bar{\varPsi}_r\big(|\zeta|_{\bar{\mathbb{A}}_r^u}\big)\, \xi, \xi \rangle
&=& \bigg(\bar{\varPsi}''_r\big(|\zeta|_{\bar{\mathbb{A}}_r^u}\big)-\frac{\bar{\varPsi}'_r\big(|\zeta|_{\bar{\mathbb{A}}_r^u}\big)}{|\zeta|_{\bar{\mathbb{A}}_r^u}}\bigg) \frac{\langle \bar{\mathbb{A}}_r^u \zeta, \xi \rangle^2}{|\zeta|_{\bar{\mathbb{A}}_r^u}^2} + \frac{\bar{\varPsi}'_r\big(|\zeta|_{\bar{\mathbb{A}}_r^u}\big)}{|\zeta|_{\bar{\mathbb{A}}_r^u}} \langle \bar{\mathbb{A}}_r^u \xi,\xi \rangle \notag\\
&=& \bar{\varPsi}''_r\big(|\zeta|_{\bar{\mathbb{A}}_r^u}\big) \frac{\langle \bar{\mathbb{A}}_r^u \zeta, \xi \rangle^2}{|\zeta|_{\bar{\mathbb{A}}_r^u}^2} + \frac{\bar{\varPsi}'_r\big(|\zeta|_{\bar{\mathbb{A}}_r^u}\big)}{|\zeta|_{\bar{\mathbb{A}}_r^u}} \bigg(\langle \bar{\mathbb{A}}_r^u \xi,\xi \rangle - \frac{\langle \bar{\mathbb{A}}_r^u \zeta, \xi \rangle^2}{|\zeta|_{\bar{\mathbb{A}}_r^u}^2}\bigg)\,.
\end{eqnarray}
To estimate the right-hand side of \eqref{rem_cond_2}, we notice that $\frac{\bar{\varPsi}'_r(|\zeta|_{\bar{\mathbb{A}}_r^u})}{|\zeta|_{\bar{\mathbb{A}}_r^u}} \geq \frac{\bar{\varPsi}''_r(|\zeta|_{\bar{\mathbb{A}}_r^u})}{q-1} $  due to \eqref{cond_p_q_2}, and the Cauchy-Schwarz inequality yields $\langle \bar{\mathbb{A}}_r^u \zeta, \xi \rangle^2 \leq \langle \bar{\mathbb{A}}_r^u \zeta, \zeta \rangle \langle \bar{\mathbb{A}}_r^u \xi, \xi \rangle = |\zeta|_{\bar{\mathbb{A}}_r^u}^2 \langle \bar{\mathbb{A}}_r^u \xi, \xi \rangle$, which means $\langle \bar{\mathbb{A}}_r^u \xi,\xi \rangle - \frac{\langle \bar{\mathbb{A}}_r^u \zeta, \xi \rangle^2}{|\zeta|_{\bar{\mathbb{A}}_r^u}^2} \geq 0$. Then \eqref{rem_cond_2} is  estimated as follows:
\begin{eqnarray}\label{D_2_1}
\langle D^2_\zeta \bar{\varPsi}_r\big(|\zeta|_{\bar{\mathbb{A}}_r^u}\big)\, \xi, \xi \rangle
&\geq& \bar{\varPsi}''_r\big(|\zeta|_{\bar{\mathbb{A}}_r^u}\big) \frac{\langle \bar{\mathbb{A}}_r^u \zeta, \xi \rangle^2}{|\zeta|_{\bar{\mathbb{A}}_r^u}^2}
+ \frac{\bar{\varPsi}''_r\big(|\zeta|_{\bar{\mathbb{A}}_r^u}\big)}{q-1}  \bigg(\langle \bar{\mathbb{A}}_r^u \xi,\xi \rangle - \frac{\langle \bar{\mathbb{A}}_r^u \zeta, \xi \rangle^2}{|\zeta|_{\bar{\mathbb{A}}_r^u}^2}\bigg)\notag\\
&\geq& \bar{\varPsi}''_r\big(|\zeta|_{\bar{\mathbb{A}}_r^u}\big) \frac{\langle \bar{\mathbb{A}}_r^u \zeta, \xi \rangle^2}{|\zeta|_{\bar{\mathbb{A}}_r^u}^2}
+ \min\big\{1, \tfrac{1}{q-1}\big\} \bar{\varPsi}''_r\big(|\zeta|_{\bar{\mathbb{A}}_r^u}\big)  \bigg(\langle \bar{\mathbb{A}}_r^u \xi,\xi \rangle - \frac{\langle \bar{\mathbb{A}}_r^u \zeta, \xi \rangle^2}{|\zeta|_{\bar{\mathbb{A}}_r^u}^2}\bigg)\notag\\
&=& \Big(1 - \min\big\{1, \tfrac{1}{q-1}\big\} \Big) \bar{\varPsi}''_r\big(|\zeta|_{\bar{\mathbb{A}}_r^u}\big) \frac{\langle \bar{\mathbb{A}}_r^u \zeta, \xi \rangle^2}{|\zeta|_{\bar{\mathbb{A}}_r^u}^2}  +  \min\big\{1, \tfrac{1}{q-1}\big\} \bar{\varPsi}''_r\big(|\zeta|_{\bar{\mathbb{A}}_r^u}\big) \langle \bar{\mathbb{A}}_r^u \xi,\xi \rangle\notag\\
&\geq& \min\big\{1, \tfrac{1}{q-1}\big\} \bar{\varPsi}''_r\big(|\zeta|_{\bar{\mathbb{A}}_r^u}\big) \langle \bar{\mathbb{A}}_r^u \xi,\xi  \rangle.
\end{eqnarray}
On the other hand, thanks to  \eqref{cond_A} and \eqref{cond_p_q_2} it follows from \eqref{D_2} that
\begin{eqnarray}\label{D_2_3}
| D^2_\zeta \bar{\varPsi}_r\big(|\zeta|_{\bar{\mathbb{A}}_r^u}\big)|
&\leq&  \bigg(\bar{\varPsi}''_r\big(|\zeta|_{\bar{\mathbb{A}}_r^u}\big)  +  \frac{\bar{\varPsi}'_r\big(|\zeta|_{\bar{\mathbb{A}}_r^u}\big)}{|\zeta|_{\bar{\mathbb{A}}_r^u}}\bigg) \frac{|\bar{\mathbb{A}}_r^u \zeta \otimes \bar{\mathbb{A}}_r^u \zeta|}{|\zeta|_{\bar{\mathbb{A}}_r^u}^2} +  \frac{\bar{\varPsi}'_r\big(|\zeta|_{\bar{\mathbb{A}}_r^u}\big)}{|\zeta|_{\bar{\mathbb{A}}_r^u}}|\bar{\mathbb{A}}_r^u|\notag\\
&\leq&  2 \Lambda^4\, \bigg(\bar{\varPsi}''_r\big(|\zeta|_{\bar{\mathbb{A}}_r^u}\big)  +  \frac{\bar{\varPsi}'_r\big(|\zeta|_{\bar{\mathbb{A}}_r^u}\big)}{|\zeta|_{\bar{\mathbb{A}}_r^u}}\bigg)\notag\\
&\leq& 2 \,\Big(1 + \tfrac{1}{p-1}\Big)\,\Lambda^4\,\bar{\varPsi}''_r\big(|\zeta|_{\bar{\mathbb{A}}_r^u}\big),
\end{eqnarray}
which combines \eqref{D_2_1} and \eqref{cond_p_q_2} implies $\langle D^2_\zeta \bar{\varPsi}_r\big(|\zeta|_{\bar{\mathbb{A}}_r^u}\big)\, \xi, \xi \rangle \geq c^*\, | D^2_\zeta \bar{\varPsi}_r\big(|\zeta|_{\bar{\mathbb{A}}_r^u}\big)|\,|\xi|^2 $ with $c^* = \frac{(p-1)\min\{1,\frac{1}{q-1}\}}{2p\Lambda^5}$.

Next, we are to verify Condition (iii) of Definition \ref{def_p_q}. It follows from \eqref{D_2_1} and \eqref{cond_A} that $|D^2_\zeta \bar{\varPsi}_r\big(|\zeta|_{\bar{\mathbb{A}}_r^u}\big)|\,|\xi|^2 \geq c\,\bar{\varPsi}''_r\big(|\zeta|_{\bar{\mathbb{A}}_r^u}\big) |\xi|^2$, which combines \eqref{D_2_3} yields $| D^2_\zeta \bar{\varPsi}_r\big(|\zeta|_{\bar{\mathbb{A}}_r^u}\big)| \approx |\bar{\varPsi}''_r\big(|\zeta|_{\bar{\mathbb{A}}_r^u}\big)|$; moreover, \eqref{cond_A} also implies $|be|_{\bar{\mathbb{A}}_r^u} \thickapprox b$ for $b > 0$ and $|e| = 1$, then $\frac{|D^2_\zeta \bar{\varPsi}_r\big(|be|_{\bar{\mathbb{A}}_r^u}\big)|}{b^{p-2}} \approx  \frac{\bar{\varPsi}''_r\big(|be|_{\bar{\mathbb{A}}_r^u}\big)}{|be|_{\bar{\mathbb{A}}_r^u}^{p-2}}  \approx  \frac{\bar{\varPsi}_r\big(b\big)}{b^{p}} $. Therefore, by \eqref{cond_p_q_2}  we obtain that for $0 < s \leq t < \infty$,	\begin{equation*}
\frac{|D^2_\zeta \bar{\varPsi}_r\big(|se|_{\bar{\mathbb{A}}_r^u}\big)|}{s^{p-2}}
\leq  c\frac{\bar{\varPsi}_r\big(s\big)}{s^{p}}
=  c\frac{\bar{\varPsi}_r\big(\frac{s}{t}t\big)}{s^{p}}
\leq  c\,\Big(\frac{s}{t}\Big)^{p}\frac{\bar{\varPsi}_r\big(t\big)}{s^{p}}
=  c\frac{\bar{\varPsi}_r\big(t\big)}{t^{p}}
\leq  c\frac{|D^2_\zeta \bar{\varPsi}_r\big(|te|_{\bar{\mathbb{A}}_r^u}\big)|}{t^{p-2}}.
\end{equation*}
Similarly, there hold the following:
\begin{equation*}
\frac{|D^2_\zeta \bar{\varPsi}_r\big(|te|_{\bar{\mathbb{A}}_r^u}\big)|}{t^{q-2}}
\leq  c\frac{\bar{\varPsi}_r\big(t\big)}{t^{q}}
=  c\frac{\bar{\varPsi}_r\big(\frac{t}{s}s\big)}{t^{q}}
\leq  c\,\Big(\frac{t}{s}\Big)^{q}\frac{\bar{\varPsi}_r\big(s\big)}{t^{q}}
=  c\frac{\bar{\varPsi}_r\big(s\big)}{s^{q}}
\leq  c\frac{|D^2_\zeta \bar{\varPsi}_r\big(|se|_{\bar{\mathbb{A}}_r^u}\big)|}{s^{q-2}}.
\end{equation*}
This finishes the verification of Definition \ref{def_p_q} (iii).

In summary, we conclude that $\bar{\varPsi}_r\big(|\zeta|_{\bar{\mathbb{A}}_r^u}\big)$ has quasi-isotropic $(p, q)$-growth.
\end{proof}

\section{ Several comparison estimates}
\setcounter{equation}{0}
\setcounter{theorem}{0}

We devote this section to a few of comparison estimates between the original functional and various locally regularized functionals with frozen coefficients. With the harmonic approximation lemma from the previous section, we can deal with the comparison estimates between $Du$ and $Dv$ in the two cases of $a(\cdot) \in C(\Omega)$ or $a(\cdot) \in C^{0,\beta }(\Omega)$.  To this end, we  denote an excess energy functional of $u$ by
\begin{equation*}
\mathcal{E}(y, r) := \fint_{B_r(y)} |u - \langle u \rangle_{B_r(y)}| \, dx.
\end{equation*}
In this section, let us write $B_r = B_r(y)$ and $\mathcal{E}(x_0, r) = \mathcal{E}(r)$ if the center $y$ is not essential for the sake of simplicity.
Let us now state the first approximating estimate as follows.

\begin{lemma}\label{lem_comp}
For ${\varphi_1}, {\varphi_2} \in \mathbf{\Phi}_{p,q}$ with ${\varphi_1} \prec  {\varphi_2}$ and $\mathbb{A}(\cdot,\cdot)$ satisfying \eqref{cond_A}, let $u \in W^{1,1}(\Omega,\mathbb{R}^N)$ be a local minimizer of the functional \eqref{main_funl}, and $v \in u + W_0^{1,1}\big(B_{r/8}, \mathbb{R}^N\big)$ be a minimizer of the local frozen functional \eqref{fun_v}. If $\mathbb{A}(\cdot,\cdot)$ and $a(\cdot)$ are continuous with the modulus of continuity
shown in \eqref{cond_continuous} satisfying the gap condition \eqref{gap_2}, then for any $B_r \Subset \Omega$  we have
\begin{equation*}
\fint_{B_{r/8}} \big|V_{\bar{\varPsi}_r}(Du) - V_{\bar{\varPsi}_r}(Dv)\big|^2\, dx \leq c_1\,\Big(\omega_{\mathbb{A}}^{\,\mu}\big(r + \mathcal{E}(r)\big) + \mathcal{G}(r) + r^\frac{n\varepsilon^2}{1+\varepsilon}\Big)^{\kappa_1}\,\bar{\varPsi}_r\,\bigg(\fint_{B_{r/4}} |Du|\, dx\bigg) ,
\end{equation*}
where $c_1 = c_1\,(\texttt{data}_0) \geq 1$, $\mathcal{G}(r)$ as shown in \eqref{Gr} and $\mu = \min\{\frac{\varepsilon}{1+\varepsilon},\frac{1}{2}\}, {\kappa_1} = \frac{\varepsilon p}{q + 1+ \varepsilon p}$ for $\varepsilon <\varepsilon_0$ with $\varepsilon_0$ as in Lemma \ref{lem_high_int}.
\end{lemma}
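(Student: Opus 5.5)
The plan is to apply the harmonic approximation Lemma \ref{lem_app} on $B_\rho=B_{r/8}$, with $F(\zeta):=\bar{\varPsi}_r\big(|\zeta|_{\bar{\mathbb{A}}_r^u}\big)$ and $\phi=\bar{\varPsi}_r$. Lemma \ref{rem_app} shows that $F$ has quasi-isotropic $(p,q)$-growth. By \eqref{cond_A} and \eqref{est_<1}--\eqref{est_>1} we also have $\Lambda^{-q/2}\phi(|\xi|)\le F(\xi)\le\Lambda^{q/2}\phi(|\xi|)$. Since $v$ is the minimizer of $\int_{B_{r/8}}F(Dv)\,dx$ with $v-u\in W_0^{1,1}$, and the minimizer is unique by strict convexity, the function produced by Lemma \ref{lem_app} is exactly our $v$. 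The conclusion of that lemma, combined with \eqref{est_V}, bounds $\fint_{B_{r/8}}|V_{\bar{\varPsi}_r}(Du)-V_{\bar{\varPsi}_r}(Dv)|^2\,dx$ by $c\,\delta^{\varepsilon p/(b+\varepsilon p)}E(u,r/8)$. By \eqref{ineq_rev} applied on $B_{r/4}$, together with $\bar{\varPsi}_r\le\varPsi(x,\cdot)$, we have $E(u,r/8)\le c\,\bar{\varPsi}_r\big(\fint_{B_{r/4}}|Du|\,dx\big)$. It therefore remains to verify the almost-minimality inequality with $b=q+1$, which gives the exponent $\kappa_1=\varepsilon p/(q+1+\varepsilon p)$, and with $\delta\approx\omega_{\mathbb{A}}^{\mu}(r+\mathcal{E}(r))+\mathcal{G}(r)+r^{n\varepsilon^2/(1+\varepsilon)}$.

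To verify almost-minimality, I fix $\eta\in W_0^{1,\infty}(B_{r/8})$ and write $\ell:=\|D\eta\|_\infty$ and $\lambda:=\phi^{-1}(E)$. Using the minimality of $u$ for \eqref{main_funl}, I split
\[
\fint F(Du)-\fint F(Du+D\eta)=\mathrm{I}+\mathrm{II}+\mathrm{III}.
\]
Here $\mathrm{I}$ compares $\bar{\varPsi}_r(|Du|_{\bar{\mathbb{A}}})$ with $\varPsi(x,|Du|_{\mathbb{A}^u})$, $\mathrm{II}=\fint\varPsi(x,|Du|_{\mathbb{A}^u})-\varPsi(x,|Du+D\eta|_{\mathbb{A}^{u+\eta}})\le0$, and $\mathrm{III}$ compares $\varPsi(x,|Du+D\eta|_{\mathbb{A}^{u+\eta}})$ with $F(Du+D\eta)$.

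\emph{Coefficient $a$.} Since $\bar a_r\le a(x)$, the part of $\mathrm{I}$ coming from $a$ has a favourable sign. The corresponding part of $\mathrm{III}$ is at most $L\omega_a(r)\fint\varphi_2(|Du|+\ell)$. I bound $\varphi_2(|Du|)$ by $\varphi_2/\varphi_1$ evaluated via \eqref{omega}, and $\varphi_2(\ell)\le\varphi_2\big(\lambda(1+\ell/\lambda)\big)\le(1+\ell/\lambda)^q\varphi_2(\lambda)$. Repeating the argument of \eqref{right-2} then yields $c\,\mathcal{G}(r)E(1+\ell/\lambda)^q$. Here I use that $\lambda\lesssim\varphi_1^{-1}(Tr^{-n})$, which holds by \eqref{est_l_1}.

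\emph{Coefficient $\mathbb{A}$.} The mean value theorem together with \eqref{equiv} gives
\[
\big|\bar{\varPsi}_r(|\xi|_{\mathbb{A}_1})-\bar{\varPsi}_r(|\xi|_{\mathbb{A}_2})\big|\lesssim|\mathbb{A}_1-\mathbb{A}_2|\,\bar{\varPsi}_r(|\xi|),
\]
and the same holds for $\varPsi(x,\cdot)$. Hence these errors are bounded by $\fint\min\{1,\omega_{\mathbb{A}}(r+|u-\langle u\rangle_r|+|\eta|)\}\,\varPsi(x,|Du|+\ell)$. By Hölder's inequality with exponents $1+\varepsilon$ and $(1+\varepsilon)/\varepsilon$, and using the higher integrability \eqref{ineq_rev}, this is at most
\[
\Big(\fint\omega_{\mathbb{A}}^{(1+\varepsilon)/\varepsilon}\Big)^{\varepsilon/(1+\varepsilon)}E(1+\ell/\lambda)^q.
\]
Because $\omega_{\mathbb{A}}\le1$ is concave, Jensen's inequality gives $\fint\omega_{\mathbb{A}}^{(1+\varepsilon)/\varepsilon}\le\fint\omega_{\mathbb{A}}\le\omega_{\mathbb{A}}\big(r+\mathcal{E}(r)+\fint|\eta|\big)$, which produces the power $\mu$.

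\emph{The term $\eta$ in $\omega_{\mathbb{A}}$.} This is the step I expect to be the main obstacle. We have $|\eta|\le r\ell$, so $\omega_{\mathbb{A}}(r\ell)\le\omega_{\mathbb{A}}\big(r\lambda(1+\ell/\lambda)\big)\le(1+\ell/\lambda)\,\omega_{\mathbb{A}}(r\lambda)$ by concavity. The factor $r\lambda$ need not be small, however: $\lambda\lesssim\varphi_1^{-1}(Mr^{-n/(1+\varepsilon)})\lesssim r^{-n/(p(1+\varepsilon))}$. I would handle this by splitting into two cases. If $r\lambda\le r^{\varepsilon'}$, the term is absorbed into $\omega_{\mathbb{A}}$. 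Otherwise $E$ is large and I would use the global bound \eqref{high_norm} to obtain a factor like $r^{n\varepsilon^2/(1+\varepsilon)}$, which explains that term in $\delta$. The remaining factors $(1+\ell/\lambda)^{q}$ and $(1+\ell/\lambda)$ combine into $(1+\ell/\lambda)^{q+1}$, which accounts for $b=q+1$. Collecting the estimates gives the almost-minimality with this $\delta$, and the claim follows from Lemma \ref{lem_app}.
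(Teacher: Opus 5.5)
Your overall frame matches the paper: Lemma \ref{lem_app} on $B_{r/8}$ with $F=\bar{\varPsi}_r(|\cdot|_{\bar{\mathbb{A}}_r^u})$ and $b=q+1$, followed by \eqref{est_V} and \eqref{ineq_rev}. However, your verification of almost-minimality has two genuine gaps.

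\textbf{The $\eta$-term in $\omega_{\mathbb{A}}$.} The step you flag as the main obstacle is not resolved by a case split. The missing ingredient is the $L^1$-Caccioppoli inequality, Lemma \ref{lem_cacc}. Applying $\bar{\varPsi}_r^{-1}$ to \eqref{ineq_rev} and using \eqref{est_>1} gives
\[
\lambda=\bar{\varPsi}_r^{-1}(E(u,r))\le c\fint_{B_{r/4}}|Du|\,dx\le c\,\mathcal{E}(r)/r .
\]
Hence $\|\eta\|_{L^\infty}\le r\ell\le c\,(\ell/\lambda)\,\mathcal{E}(r)$, and concavity yields $\omega_{\mathbb{A}}(\|\eta\|_{L^\infty})\le c\,(1+\ell/\lambda)\,\omega_{\mathbb{A}}(\mathcal{E}(r))$. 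This is \eqref{Epsi}--\eqref{omega_eta}, and it is how the $\eta$-dependence is absorbed into $\omega_{\mathbb{A}}^{\mu}(r+\mathcal{E}(r))$. So $r\lambda\lesssim\mathcal{E}(r)$ always holds, and there is no bad case.

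Your split does not work as stated. The first branch produces $\omega_{\mathbb{A}}(r^{\varepsilon'})$, which for a general concave modulus is not bounded by $\omega_{\mathbb{A}}^{\mu}(r+\mathcal{E}(r))+\mathcal{G}(r)+r^{n\varepsilon^2/(1+\varepsilon)}$. In the second branch, when $\ell\lesssim\lambda$, the only available bound for the error from replacing $\mathbb{A}(x,u+\eta)$ by $\mathbb{A}(x,u)$ is of order $\omega_{\mathbb{A}}(r\lambda)\,E$, with no small factor. An upper bound on $E$ from \eqref{high_norm} cannot create smallness \emph{relative to} $E$.

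\textbf{The coefficient $a$.} After replacing $a(x)-\bar a_r$ by $L\omega_a(r)$, you need $\omega_a(r)\fint_{B_{r/8}}\varphi_2(|Du|)\,dx\lesssim\mathcal{G}(r)E$. But \eqref{omega} and \eqref{right-2} control $\varphi_2/\varphi_1$ only at the averaged value $\fint|Du|$; there $\varphi_2(\fint|Du|)$ came from a Jensen argument with concave $\delta$-powers, and Jensen goes the wrong way for $\fint\varphi_2(|Du|)$. Pointwise, $\varphi_2(|Du|)/\varphi_1(|Du|)$ is unbounded where $|Du|$ is large. Moreover, $\varphi_2(|Du|)$ is not a priori known to be integrable where $a$ is small.

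The paper handles this by splitting $B_{r/8}$ into two sets:
\begin{itemize}
\item On $\mathcal{D}_1=\{|Du+D\eta|\le\varphi_1^{-1}(r^{-n})\}$, the ratio bound holds pointwise and yields $\mathcal{G}(r)$.
\item On $\mathcal{D}_2$, the whole integrand $\varPsi(x,|Du+D\eta|)$, not a difference, is estimated using $\varphi_1(|Du+D\eta|)^{-\varepsilon}\le r^{n\varepsilon}$, the higher integrability, and \eqref{high_norm}.
\end{itemize}
That is where the term $r^{n\varepsilon^2/(1+\varepsilon)}$ in $\delta$ actually comes from, not from the $\eta$-dependence of $\omega_{\mathbb{A}}$.
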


\begin{proof}
Thanks to Lemma \ref{lem_high_int}, we see that $\bar{\varPsi}_r\big(|Du|\big) \in L^{1 + \varepsilon}(B_r)$, which yields the fact
\begin{equation}\label{def_E}
E(u, r)= \bigg(\fint_{B_{r/8}} \Big(\bar{\varPsi}_r\big(|Du|\big)\Big)^{1+\varepsilon} dx\bigg)^\frac{1}{1+\varepsilon}<\infty.
\end{equation}
To employ the harmonic approximation Lemma  \ref{lem_app},
we claim that there is ${\delta}_1(r) := \omega_{\mathbb{A}}^{\,\mu}\big(r + \mathcal{E}(r)\big) + \mathcal{G}(r) + r^\frac{n\varepsilon^2}{1+\varepsilon}$  such that for any $\eta \in W_0^{1, \infty}(B_{r/8}, \mathbb{R}^N)$ we have
\begin{equation}\label{u_al_min}
\fint_{B_{r/8}} \bar{\varPsi}_r\big(|Du|_{\bar{\mathbb{A}}_r^u}\big)\, dx   \leq   \fint_{B_{r/8}} \bar{\varPsi}_r\big(|Du + D\eta|_{\bar{\mathbb{A}}_r^u}\big)\, dx  +  c\, {\delta}_1(r)\, E(u, r)\,\bigg(\frac{\|D\eta\|_{L^\infty(B_{r/8})}}{\bar{\varPsi}^{-1}_r\big(E(u, r)\big)}+1\bigg)^{q+1}.
\end{equation}
With the validity of \eqref{u_al_min} and $\bar{\varPsi}_r\big(|\cdot|_{\bar{\mathbb{A}}_r^u}\big)$ as a quasi-isotropic $(p, q)$-growth shown in Lemma \ref{rem_app}, then we use the harmonic approximation of Lemma \ref{lem_app} to find
\begin{equation*}
\fint_{B_{r/8}} \frac{\bar{\varPsi}'_r\big(|Du|+ |Dv|\big)}{|Du| + |Dv|} |Du - Dv|^2 dx \leq c\,\Big({\delta}_1(r)\Big)^\frac{\varepsilon p}{q + 1+ \varepsilon p}E(u, r).
\end{equation*}
This combines with the equivalence in \eqref{est_V} and the inverse H\"older inequality \eqref{ineq_rev} to get that
\begin{equation*}
\fint_{B_{r/8}} \big|V_{\bar{\varPsi}_r}(Du) - V_{\bar{\varPsi}_r}(Dv)\big|^2\, dx \leq c\,\Big({\delta}_1(r)\Big)^\frac{\varepsilon p}{q + 1+ \varepsilon p}\,\bar{\varPsi}_r\,\bigg(\fint_{B_{r/4}} |Du|\, dx\bigg)\,,
\end{equation*}
which completes the proof of Lemma \ref{lem_comp} with ${\kappa_1} = \frac{\varepsilon p}{q + 1+ \varepsilon p}$.

We are now in a position to prove \eqref{u_al_min}. Obviously, $\bar{\varPsi}_r\big(|Du|_{{\mathbb{A}}^u}\big) \leq \varPsi\big(x, |Du|_{\mathbb{A}^u}\big)$ for any $x \in B_{r}$ due to Definition of $\bar{\varPsi}_r(\cdot)$, and we deduce that
\begin{eqnarray}\label{est_al_min}
\fint_{B_{r/8}} \bar{\varPsi}_r\big(|Du|_{\bar{\mathbb{A}}_r^u}\big)\, dx
& = &    \fint_{B_{r/8}} \bar{\varPsi}_r\big(|Du|_{\bar{\mathbb{A}}_r^u}\big)\, dx - \fint_{B_{r/8}} \bar{\varPsi}_r\big(|Du|_{\mathbb{A}^u}\big)\, dx + \fint_{B_{r/8}} \bar{\varPsi}_r\big(|Du|_{\mathbb{A}^u}\big)\, dx \notag\\
& \leq &    \fint_{B_{r/8}} \big|\bar{\varPsi}_r\big(|Du|_{\bar{\mathbb{A}}_r^u}\big) -  \bar{\varPsi}_r\big(|Du|_{\mathbb{A}^u}\big)\big|\, dx + \fint_{B_{r/8}} \varPsi\big(x, |Du|_{\mathbb{A}^u}\big)\, dx.
\end{eqnarray}
In the following we prove \eqref{u_al_min} by parting in two steps.
\vspace{3pt}

\noindent\emph{Step 1.}\ \ We are first to estimate the first-term on the right-hand side of \eqref{est_al_min}.
Note that $|s^{1/2} - t^{1/2}|  \leq |s-t|^{1/2}$ for $s,t > 0$, then it follows from the ellipticity  \eqref{cond_A} of  $\mathbb{A}_1 $ and $\mathbb{A}_2$ that for any $\xi \in \mathbb{R}^{N \times n}$ we get
\begin{equation}\label{A_1-A_2}
\big|\,|\xi|_{\mathbb{A}_1} - |\xi|_{\mathbb{A}_2}\big|
\leq \big|\big\langle \mathbb{A}_1\,\xi, \xi \big\rangle - \big\langle \mathbb{A}_2\,\xi,\xi \big\rangle\big|^{\frac{1}{2}}
\leq |\mathbb{A}_1 - \mathbb{A}_2|^\frac{1}{2}\,|\xi|.
\end{equation}
This together with the Newton-Leibniz formula yields that
\begin{eqnarray*}
&&\fint_{B_{r/8}} \big|\bar{\varPsi}_r\big(|Du|_{\bar{\mathbb{A}}_r^u}\big) -  \bar{\varPsi}_r\big(|Du|_{\mathbb{A}^u}\big)\big|\, dx \notag\\
&=& \fint_{B_{r/8}} \int^1_0 \big|\,|Du|_{\bar{\mathbb{A}}_r^u} - |Du|_{\mathbb{A}^u}\big|\,\,\bar{\varPsi}_r^{\prime}\big(t\,|Du|_{\bar{\mathbb{A}}_r^u} + (1-t)\,|Du|_{\mathbb{A}^u}\big)\, dt \, dx\notag\\
&\leq& \fint_{B_{r/8}} \big|\mathbb{A}(x_0, \langle u \rangle_{B_r}) - \mathbb{A}(x, u)\big|^\frac{1}{2}\,|Du| \,\bar{\varPsi}_r^{\prime}\big(2\Lambda|Du|\big)\,dx  \notag\\
&\leq& c \fint_{B_{r/8}}\omega_{\mathbb{A}}^{\frac{1}{2}}\big(r + |u - \langle u \rangle_{B_r}|\big)\,\bar{\varPsi}_r\big(|Du|\big)\, dx,
\end{eqnarray*}
where we used $t \bar{\varPsi}_r'(t) \approx \bar{\varPsi}_r(t)$ and the modulus of continuity of $\mathbb{A}(\cdot,\cdot)$
in the last step. By the boundedness and the concavity of $t \mapsto \omega_{\mathbb{A}}(t)$, together with the higher integrability of $Du$ in \eqref{ineq_rev}, it implies that
\begin{eqnarray}\label{est_al_min_ft}
&&\fint_{B_{r/8}} \big|\bar{\varPsi}_r\big(|Du|_{\bar{\mathbb{A}}_r^u}\big) -  \bar{\varPsi}_r\big(|Du|_{\mathbb{A}^u}\big)\big|\, dx \notag\\
&\leq& c\, \bigg(\fint_{B_{r/8}} \Big(\omega_{\mathbb{A}}\big(r + |u - \langle u \rangle_{B_r}|\big)\Big)^\frac{1+\varepsilon}{2\varepsilon} dx\bigg)^{\frac{\varepsilon}{1+\varepsilon}} \bigg(\fint_{B_{r/8}} \Big(\bar{\varPsi}_r\big(|Du|\big)\Big)^{1+ \varepsilon} dx\bigg)^\frac{1}{1+\varepsilon}\notag\\
&\leq& c\, \omega_{\mathbb{A}}^{\frac{\varepsilon}{1+\varepsilon}}\big(r + \mathcal{E}(r)\big)\, E(u, r) \notag\\[6pt]
&\leq& c\,{\delta}_1(r)\,E(u, r),
\end{eqnarray}
where $c = c\big(p, q, \Lambda, L, K\big) \geq 1$.
\vspace{3pt}

\noindent\emph{Step 2.}\ \  We focus on the second term of the right-hand side of \eqref{est_al_min}. By the local minimality of $u$, for any $\eta \in W^{1,\infty}_0(B_{r/8}, \mathbb{R}^N)$ we have
\begin{equation}\label{min_u}
\fint_{B_{r/8}} \varPsi\big(x, |Du|_{\mathbb{A}^u}\big)\, dx \leq \fint_{B_{r/8}} \varPsi\big(x, |Du + D\eta|_{\mathbb{A}^{u + \eta}}\big)\, dx.
\end{equation}
Let us write $w = u + \eta$ and divide the right-hand side into two parts
\begin{equation*}
\mathcal{D}_1 = {\Big\{x \in B_{r/8} : |Dw| \leq \varphi_1^{-1}(r^{-n})\Big\}},
\qquad
\mathcal{D}_2 = {\Big\{x \in B_{r/8} : |Dw| > \varphi_1^{-1}(r^{-n})\Big\}}.
\end{equation*}
Then we  decompose \eqref{min_u} as
\begin{eqnarray}\label{est_al_min_st}
\fint_{B_{r/8}} \varPsi\big(x, |Du|_{\mathbb{A}^u}\big)\, dx
&\leq & \frac{1}{|B_{r/8}|}\int_{\mathcal{D}_1} \varPsi\big(x, |Dw|_{\mathbb{A}^w}\big)\, dx + \frac{1}{|B_{r/8}|}\int_{\mathcal{D}_2} \varPsi\big(x, |Dw|_{\mathbb{A}^w}\big)\, dx \notag\\[6pt]
&:=& I_{1} + I_{2}.
\end{eqnarray}
By direct calculation and enlarging the integration domain, we estimate the term $I_{1}$ as follows:
\begin{eqnarray}\label{est_I1_1}
I_{1} &\leq&   \frac{1}{|B_{r/8}|}\int_{\mathcal{D}_1} \big|{\varPsi}\big(x, |Dw|_{\mathbb{A}^w}\big)-\bar{\varPsi}_r\big(|Dw|_{\mathbb{A}^w}\big)\big|\, dx +  \fint_{B_{r/8}} \big|\bar{\varPsi}_r\big(|Dw|_{\mathbb{A}^w}\big)-\bar{\varPsi}_r\big(|Dw|_{\mathbb{A}^u}\big)\big|\, dx \notag\\
&&+\,  \fint_{B_{r/8}} \big|\bar{\varPsi}_r\big(|Dw|_{\mathbb{A}^u}\big)-\bar{\varPsi}_r\big(|Dw|_{\bar{\mathbb{A}}_r^u}\big)\big|\, dx +  \fint_{B_{r/8}} \bar{\varPsi}_r\big(|Dw|_{\bar{\mathbb{A}}_r^u}\big)\, dx \notag\\
&:=& J_1+J_2+J_3+ \fint_{B_{r/8}} \bar{\varPsi}_r\big(|Dw|_{\bar{\mathbb{A}}_r^u}\big)\, dx.
\end{eqnarray}

To estimate $J_1$, we notice that $\varphi_2(|Dw|_{\mathbb{A}^w}) \leq \Lambda^\frac{q}{2} \varphi_2(|Dw|)$ due to the boundedness of $\mathbb{A}(\cdot, \cdot)$ and  \eqref{est_>1} for $\varphi_2(\cdot)$. This yields that
\begin{eqnarray}\label{est_I1_ft_1}
J_1
\leq \frac{1}{|B_{r/8}|}\int_{\mathcal{D}_1} |a(x) - \bar{a}_r| \,{\varphi_2}\big(|Dw|_{\mathbb{A}^w}\big)\, dx
\leq  \frac{c}{|B_{r/8}|}\int_{\mathcal{D}_1} \omega_a(r) \,{\varphi_2}\big(|Dw|\big) dx.
\end{eqnarray}
Since $|Dw| \leq \varphi_1^{-1}(r^{-n})$ in $\mathcal{D}_1$,  we get $\frac{{\varphi_2}(|Dw|)}{{\varphi_1}(|Dw|)} \leq \frac{{\varphi_2}(\varphi_1^{-1}(r^{-n}))}{{\varphi_1}(\varphi_1^{-1}(r^{-n}))} = \frac{{\varphi_2}\circ \varphi_1^{-1}(r^{-n})}{r^{-n}}$ due to the non-decreasing property of $t \mapsto \frac{\varphi_2(t)}{\varphi_1(t)}$. Obviously, $\varphi_1(|Dw|) \leq \varphi_1(|Dw|) + \bar{a}_r\, \varphi_2(|Dw|) = \bar{\varPsi}_r\big(|Dw|\big)$, then
\begin{eqnarray}\label{est_I1_ft_2}
\int_{\mathcal{D}_1} \omega_a(r) \,{\varphi_2}\big(|Dw|\big) dx &=& \int_{\mathcal{D}_1} \omega_a(r) \,\frac{{\varphi_2}\big(|Dw|\big)}{{\varphi_1}\big(|Dw|\big)} \,{\varphi_1}\big(|Dw|\big)\,  dx \notag\\
&\le& \int_{\mathcal{D}_1} \omega_a(r) \,\frac{{\varphi_2} \circ \varphi_1^{-1}\big(r^{-n}\big)}{r^{-n}} \,\bar{\varPsi}_r\big(|Dw|\big)\,  dx \notag\\
&\leq& \mathcal{G}(r) \int_{B_{r/8}} \bar{\varPsi}_r\big(|Dw|\big)\,  dx,
\end{eqnarray}
By $w = u + \eta$ we get
$\bar{\varPsi}_r\big(|Dw|\big)\leq 2^{q-1}\Big (\bar{\varPsi}_r\big(|Du|\big)  +  \bar{\varPsi}_r\big(|D\eta|\big)\Big)$, this combines with \eqref{est_I1_ft_1} \eqref{est_I1_ft_2} to yield
\begin{equation}\label{est_J_1}
J_1 \leq c\,\mathcal{G}(r) \fint_{B_{r/8}} \Big (\bar{\varPsi}_r\big(|Du|\big)  +  \bar{\varPsi}_r\big(|D\eta|\big)\Big)\, dx.
\end{equation}
Moreover, it follows from \eqref{est_>1} and the non-decreasing property of $t \mapsto \bar{\varPsi}_r(t)$ that
\begin{eqnarray}\label{est_Deta}
\bar{\varPsi}_r\big(|D\eta|\big)
&\leq& \bar{\varPsi}_r\,\Big(\|D\eta\|_{L^\infty(B_{r/8})}\Big)
= \bar{\varPsi}_r\,\bigg(\bar{\varPsi}^{-1}_r\big(E(u, r)\big) \,\frac{\|D\eta\|_{L^\infty(B_{r/8})}}{\bar{\varPsi}^{-1}_r\big(E(u, r)\big)} \bigg)\notag\\
&\leq&  \bar{\varPsi}_r\,\Big(\bar{\varPsi}^{-1}_r\big(E(u, r)\big)\Big) \,\bigg(\frac{\|D\eta\|_{L^\infty(B_{r/8})}}{\bar{\varPsi}^{-1}_r\big(E(u, r)\big)}+1 \bigg)^{q}\notag\\
&= & E(u, r) \,\bigg(\frac{\|D\eta\|_{L^\infty(B_{r/8})}}{\bar{\varPsi}^{-1}_r\big(E(u, r)\big)}+1\bigg)^{q}.
\end{eqnarray}
Let us put $\fint_{B_{r/8}} \bar{\varPsi}_r\big(|Du|\big)\, dx  \leq E(u,r)$ and \eqref{est_Deta} into \eqref{est_J_1} to get
\begin{equation}\label{est_I1_ft}
J_1 \leq c\, {\delta}_1(r)\, E(u, r)\,\bigg(\frac{\|D\eta\|_{L^\infty(B_{r/8})}}{\bar{\varPsi}^{-1}_r\big(E(u, r)\big)}+1\bigg)^{q}.
\end{equation}

To estimate $J_2$, by the Newton-Leibniz formula, \eqref{A_1-A_2} and $t \bar{\varPsi}_r'(t) \approx \bar{\varPsi}_r(t)$ we deduce that
\begin{eqnarray}\label{est_I1_st_1}
J_2&=&\fint_{B_{r/8}}\big|\bar{\varPsi}_r\big(|Dw|_{\mathbb{A}^w}\big)-\bar{\varPsi}_r\big(|Dw|_{\mathbb{A}^u}\big)\big|\, dx \notag\\
&=& \fint_{B_{r/8}} \int^1_0 \big|\,|Dw|_{\mathbb{A}^w} -
|Dw|_{\mathbb{A}^u}\big|\,\bar{\varPsi}_r^{\prime}\big(t\,|Dw|_{\mathbb{A}^w} + (1-t)\,|Dw|_{\mathbb{A}^u}\big)\, dt \, dx \notag\\
&\leq& \fint_{B_{r/8}} \big|\mathbb{A}(x, w) - \mathbb{A}(x, u)\big|^\frac{1}{2}\,|Dw| \,\bar{\varPsi}_r^{\prime}\big(2\Lambda|Dw|\big)\,dx \notag\\
&\leq& c \fint_{B_{r/8}} \omega_{\mathbb{A}}^{\frac{1}{2}}\big(|\eta|\big)\, \bar{\varPsi}_r\big(|Dw|\big)\, dx  \notag\\
&\leq& c\, \omega_{\mathbb{A}}^{\frac{1}{2}}\big(\|\eta\|_{L^\infty(B_{r/8})}\big)\, \bigg(\fint_{B_{r/8}} \bar{\varPsi}_r\big(|Du|\big)\, dx + \fint_{B_{r/8}} \bar{\varPsi}_r\big(|D\eta|\big)\, dx\bigg)\,.
\end{eqnarray}
To estimate $\omega_{\mathbb{A}}^{\frac{1}{2}}\big(\|\eta\|_{L^\infty(B_{r/8})}\big)$, we see $\|\eta\|_{L^\infty(B_{r/8})} \leq \frac{r}{2}\|D\eta\|_{L^\infty(B_{r/8})} \allowbreak$ since $\eta$ is vanishing on $\partial B_{r/8}$. We first note the non-decreasing property of $t \mapsto \bar{\varPsi}^{-1}_r(t)$, and that $\Big(\fint_{B_{r/2}} \bar{\varPsi}_r^{1+\varepsilon}\big(x,|Du|\big) \, dx\Big)^{\frac{1}{1+\varepsilon}} \leq c_* \bar{\varPsi}_r\,\Big(\fint_{B_{r}} |Du| \, dx\Big)$ due to Lemma \ref{lem_high_int}, to get that
\begin{eqnarray*}
\bar{\varPsi}^{-1}_r\big(E(u, r)\big)
&=& \bar{\varPsi}^{-1}_r\Bigg(\bigg(\fint_{B_{r/8}} \Big(\bar{\varPsi}_r\big(|Du|\big)\Big)^{1+\varepsilon} dx\bigg)^\frac{1}{1+\varepsilon}\Bigg)\notag\\
&\leq& \bar{\varPsi}^{-1}_r\,\Bigg(c_*\,\bar{\varPsi}_r\bigg(\fint_{B_{r/4}} |Du|\, dx\bigg)\Bigg)\,.
\end{eqnarray*}
Considering that $\bar{\varPsi}^{-1}_r\,\Big(c_*\,\bar{\varPsi}_r\big(\fint_{B_{r/4}} |Du|\, dx\big)\Big) \leq c_*^{\frac{1}{p}} \fint_{B_{r/4}} |Du| \,dx$  in accordance with \eqref{est_>1}, it follows from Lemma \ref{lem_cacc} that
\begin{eqnarray}\label{Epsi}
\bar{\varPsi}^{-1}_r\big(E(u, r)\big)
&\leq& c'_1 \fint_{B_{r/4}} |Du| \,dx
\leq \frac{c'_1}{r} \fint_{B_r} {|u - \langle u \rangle_r|}\, dx
\equiv \frac{c'_1}{r} \,\mathcal{E}(r)\,.
\end{eqnarray}
Next, we notice $\omega_{\mathbb{A}}(t) \ge \frac{1}{A} \omega_{\mathbb{A}}(At)$ for $A > 1, t > 0$ due to its concavity and $\omega_{\mathbb{A}}(0) = 0$, to conclude that
\begin{eqnarray}\label{omega_eta}
\omega_{\mathbb{A}} \big(\|\eta\|_{L^\infty(B_{r/8})}\big)
&=&  \omega_{\mathbb{A}} \bigg( \frac{\|\eta\|_{L^\infty(B_{r/8})}}{\bar{\varPsi}^{-1}_r\big(E(u, r) \big) } \bar{\varPsi}^{-1}_r\big(E(u, r)\big)  \bigg)\notag\\
&\overset{\eqref{Epsi}}{\leq}&  \omega_{\mathbb{A}}\, \bigg( \frac{c_1\,\|D\eta\|_{L^\infty(B_{r/8})}}{\bar{\varPsi}^{-1}_r\big(E(u, r) \big) } \,\mathcal{E}(r)  \bigg)\notag\\
&\leq&  c'_1\, \bigg(\frac{\|D\eta\|_{L^\infty(B_{r/8})}}{\bar{\varPsi}^{-1}_r\big(E(u, r)\big)}+1\bigg)\,\omega_{\mathbb{A}} \big( \mathcal{E}(r) \big)\,,
\end{eqnarray}
which yields
$\omega_{\mathbb{A}}^{\frac{1}{2}}\big(\|\eta\|_{L^\infty(B_{r/8})}\big) \leq (c'_1)^\frac{1}{2} {\delta}_1(r) \Big(\frac{\|D\eta\|_{L^\infty(B_{r/8})}}{\bar{\varPsi}^{-1}_r\big(E(u, r)\big)}+1\Big)^{\frac{1}{2}}$.
Now we put this and \eqref{est_Deta} into the inequality \eqref{est_I1_st_1} and using $\fint_{B_{r/8}} \bar{\varPsi}_r\big(|Du|\big)\, dx  \leq E(u,r)$ to conclude
\begin{eqnarray}\label{est_I1_st}
J_2 \leq  c\,{\delta}_1(r) \,E(u, r)\,\bigg(\frac{\|D\eta\|_{L^\infty(B_{r/8})}}{\bar{\varPsi}^{-1}_r\big(E(u, r)\big)}+1\bigg)^{q+1}.
\end{eqnarray}

To estimate $J_3$, by a similar way to the estimate of $J_2$ we also get
\begin{eqnarray}\label{est_I1_tt_1}
J_3	\leq  c\fint_{B_{r/8}} \omega_{\mathbb{A}}^{\frac{1}{2}}\big(r + |u - \langle u \rangle_{B_r}|\big)\,\Big(\bar{\varPsi}_r\big(|Du|\big) + \bar{\varPsi}_r\big(|D\eta|\big)\Big)\, dx.
\end{eqnarray}
To this end, let us make use of the same argument as the estimate  \eqref{est_al_min_ft} to deduce
\begin{equation}\label{est_du}
\fint_{B_{r/8}} \omega_{\mathbb{A}}^{\frac{1}{2}}\big(r + |u - \langle u \rangle_{B_r}|\big)\,\bar{\varPsi}_r\big(|Du|\big) \, dx \leq c \,{\delta}_1(r)\,E(u, r).
\end{equation}
On the other hand, we see that $\fint_{B_{r/8}}  \omega_{\mathbb{A}}^{\frac{\varepsilon}{1+\varepsilon}}\big(r + |u - \langle u \rangle_{B_r}|\big)\,dx \leq \omega_{\mathbb{A}}^{\frac{\varepsilon}{1+\varepsilon}}\big(r + \mathcal{E}(r)\big) \leq {\delta}_1(r)$ due to the concavity of $t \mapsto \omega_{\mathbb{A}}^{\frac{\varepsilon}{1+\varepsilon}}(t)$, and the estimate of $\bar{\varPsi}_r\big(|D\eta|\big)$ in \eqref{est_Deta}, yield that
\begin{equation}\label{est_omega_eta}
\fint_{B_{r/8}} \omega_{\mathbb{A}}^{\frac{\varepsilon}{1+\varepsilon}}\big(r + |u - \langle u \rangle_{B_r}|\big)\,\bar{\varPsi}_r\big(|D\eta|\big) \, dx \leq {\delta}_1(r)\,E(u, r)\,\bigg(\frac{\|D\eta\|_{L^\infty(B_{r/8})}}{\bar{\varPsi}^{-1}_r\big(E(u, r)\big)}+1\bigg)^{q+1}.
\end{equation}
Putting \eqref{est_du} and \eqref{est_omega_eta} into \eqref{est_I1_tt_1} implies
\begin{equation}\label{est_I1_tt}
J_3
\leq  c\,{\delta}_1(r)\,E(u, r)\,\bigg(\frac{\|D\eta\|_{L^\infty(B_{r/8})}}{\bar{\varPsi}^{-1}_r\big(E(u, r)\big)}+1\bigg)^{q+1}.
\end{equation}
In summary, we insert the estimates  \eqref{est_I1_ft} \eqref{est_I1_st}  \eqref{est_I1_tt} for $J_1,J_2,J_3$ into \eqref{est_I1_1}, to attain
\begin{eqnarray}\label{est_I1}
I_{1} &\leq& J_1 + J_2 + J_3 + \fint_{B_{r/8}} \bar{\varPsi}_r\big(|Dw|_{\bar{\mathbb{A}}_r^u}\big)\, dx \notag\\
&\leq& c\,{\delta}_1(r) \,E(u, r)\,\bigg(\frac{\|D\eta\|_{L^\infty(B_{r/8})}}{\bar{\varPsi}^{-1}_r\big(E(u, r)\big)}+1\bigg)^{q+1} +  \fint_{B_{r/8}} \bar{\varPsi}_r\big(|Du + D\eta|_{\bar{\mathbb{A}}_r^u}\big)\, dx,
\end{eqnarray}
where $c = c\,\big(\texttt{data}_0\big) \geq 1$.

We devote the following to the estimate of $I_{2}$. Since $|Dw| > \varphi_1^{-1}\big(r^{-n}\big)$ for $x\in \mathcal{D}_2$, it holds that $\frac{1}{\big(\varphi_1(|Dw|)\big)^{\varepsilon}} \leq r^{n\varepsilon}$. By the ellipticity and boundedness of $\mathbb{A}(\cdot, \cdot)$ and $\bar{\varPsi}_r(|\cdot|) \leq \varPsi(x, |\cdot|)$, we show that
\begin{eqnarray}\label{est_I2_1}
I_{2}
&\leq&  \frac{c}{|B_{r/8}|}\int_{\mathcal{D}_2} \varPsi\big(x, |Dw|\big)\, dx  \notag\\
&=&  \frac{c}{|B_{r/8}|}\int_{\mathcal{D}_2} \frac{\big({\varPsi}(x, |Dw|)\big)^{1+\varepsilon}}{\big({\varPsi}(x, |Dw|)\big)^{\varepsilon}}\, dx \notag\\
&\leq & \frac{c\,r^{n\varepsilon}}{|B_{r/8}|}\,\int_{\mathcal{D}_2} \Big({\varPsi}\big(x, |Dw|\big)\Big)^{1+\varepsilon}\, dx \notag\\
&\leq&  c\,r^{n\varepsilon}\fint_{B_{r/8}}\bigg( \Big({\varPsi}\big(x, |Du|\big)\Big)^{1+\varepsilon} + \Big({\varPsi}\big(x, |D\eta|\big)\Big)^{1+\varepsilon} \bigg)\, dx.
\end{eqnarray}
For the first term of the right-hand side to \eqref{est_I2_1}, we employ the reverse H\"older to get  that
\begin{eqnarray}\label{est_I2_ft_1}
\fint_{B_{r/8}} \Big({\varPsi}\big(x, |Du|\big)\Big)^{1+\varepsilon} dx
&=&  \bigg(\fint_{B_{r/8}} \Big(\varPsi\big(x, |Du|\big)\Big)^{1+\varepsilon}dx\bigg)^\frac{\varepsilon}{1+\varepsilon}\bigg(\fint_{B_{r/8}}\Big(\varPsi\big(x, |Du|\big)\Big)^{1+\varepsilon} dx\bigg)^\frac{1}{1+\varepsilon}\notag\\
&\leq&  c\, \bigg(\fint_{B_{r/8}} \Big({\varPsi}\big(x, |Du|\big)\Big)^{1+\varepsilon} dx\bigg)^\frac{\varepsilon}{1+\varepsilon}E(u, r).
\end{eqnarray}
Note that  $\|{\varPsi}\big(x,|Du|\big)\|_{L^{1+\varepsilon}(\Omega')} \leq {M}$, it yields  that
\begin{equation}\label{est_I2_ft_2}
r^{n\varepsilon}\, \bigg(\fint_{B_{r/8}} \Big({\varPsi}\big(x, |Du|\big)\Big)^{1+\varepsilon} dx\bigg)^\frac{\varepsilon}{1+\varepsilon}
\leq  r^\frac{n\varepsilon^2}{1+\varepsilon}\,\|{\varPsi}\big(x,|Du|\big)\|^\varepsilon_{L^{1+\varepsilon}(B_{r/2})}
\leq  {\delta}_1(r)\,{M}^\varepsilon,
\end{equation}
where we used $r^\frac{n\varepsilon^2}{1+\varepsilon} \leq {\delta}_1(r)$ in the last inequality.
Putting \eqref{est_I2_ft_2} into \eqref{est_I2_ft_1}, we arrive at
\begin{equation}\label{est_I2_ft}
r^{n\varepsilon}\fint_{B_{r/8}} \Big({\varPsi}\big(x, |Du|\big)\Big)^{1+\varepsilon} dx \leq c\, {\delta}_1(r)\, E(u, r).
\end{equation}
Let us now turn to the second term of the right-hand side to \eqref{est_I2_1}. Thanks to \eqref{est_>1}, we have that
\begin{eqnarray}\label{est_I2_st_1}
&&\fint_{B_{r/8}} \Big({\varPsi}\big(x, |D\eta|\big)\Big)^{1+\varepsilon} dx \notag\\
&\leq&  c \fint_{B_{r/8}}  \Bigg({\varPsi}\,\bigg(x, \frac{|D\eta|}{\bar{\varPsi}^{-1}_r\big(E(u, r)\big)} \,\bar{\varPsi}^{-1}_r\big(E(u, r)\big)\bigg)\Bigg)^{1+\varepsilon} \,dx\notag\\
&\leq&  c \fint_{B_{r/8}} \bigg({\varPsi}\Big(x, \bar{\varPsi}^{-1}_r\big(E(u, r)\big)\Big)\bigg)^{1+\varepsilon}\,dx\,\bigg(\frac{\|D\eta\|_{L^\infty(B_{r/8})}}{\bar{\varPsi}^{-1}_r\big(E(u, r)\big)}+1\bigg)^{q+1}.
\end{eqnarray}
We notice $\bar{\varPsi}^{-1}_r(E(u, r)) \lesssim \fint_{B_r} |Du| \, dx$ by applying $\bar{\varPsi}^{-1}_r(\cdot)$ to \eqref{ineq_rev}, and the convexity of $\varphi_1(\cdot)$ yields that $ \fint_{B_r} |Du| \, dx \lesssim \varphi^{-1}_1\Big(\fint_{B_r} \varphi_1(|Du|) \, dx\Big) \lesssim \varphi^{-1}_1\Big({r}^{-n} \|\varphi_1(|Du|)\|_{L^1(B_r)}\Big) \lesssim \varphi^{-1}_1\big( r^{-n}\big)$; then it implies $\bar{\varPsi}^{-1}_r(E(u, r)) \leq \varphi^{-1}_1\big(T r^{-n}\big)$ for $T = T(p,q,\|\varPsi(x,Du)\|_{L^1(\Omega)}) \geq 1$. This yields
$\frac{\varphi_2\big(\bar{\varPsi}^{-1}_r(E(u, r))\big)}{\varphi_1\big(\bar{\varPsi}^{-1}_r(E(u, r))\big)}
\leq T^{\frac{q}{p}-1} \frac{\varphi_2 \circ \varphi_1^{-1} (r^{-n})}{r^{-n}}$
by the same reasoning as in \eqref{omega}, and then we have that
\begin{eqnarray}\label{Psi-Psi-1}
{\varPsi}\Big(x, \bar{\varPsi}^{-1}_r\big(E(u, r)\big)\Big)
&\leq& \bar{\varPsi}_r\Big(\bar{\varPsi}^{-1}_r\big(E(u, r)\big)\Big) + |a(x)- \bar{a}_r| \,\varphi_2\Big(\bar{\varPsi}^{-1}_r\big(E(u, r)\big)\Big)    \notag\\
&\leq& E(u, r) + L\,\omega_a(r)\, \frac{\varphi_2\big(\bar{\varPsi}^{-1}_r(E(u, r))\big)}{\varphi_1\big(\bar{\varPsi}^{-1}_r(E(u, r))\big)}\, \varphi_1\big(\bar{\varPsi}^{-1}_r(E(u, r))\big) \notag\\
&\leq& E(u, r) + c \,\omega_a\big( r\big)\, \frac{\varphi_2 \circ \varphi_1^{-1} \big(r^{-n}\big)}{r^{-n}}\, \bar{\varPsi}_r\big(\bar{\varPsi}^{-1}_r(E(u, r))\big)    \notag\\[6pt]
&=& c \,\Big(1 + \mathcal{G}(r)\Big)\, E(u, r).
\end{eqnarray}
We note $\big(E(u, r)\big)^\varepsilon \leq c r^{\frac{-n\varepsilon}{1+\varepsilon}} \|\varPsi(x, |Du|)\|^\varepsilon_{L^{1+\varepsilon}(B_{r/8})} $, then
inserting \eqref{Psi-Psi-1}	into \eqref{est_I2_st_1}, and considering $r^{n\varepsilon - \frac{n\varepsilon}{1+\varepsilon}} \leq {\delta}_1(r)$ and $\mathcal{G}(r) \leq K$, lead to the following facts:
\begin{eqnarray}\label{est_I2_st}
&&r^{n\varepsilon}\fint_{B_{r/8}} \Big({\varPsi}\big(x, |D\eta|\big)\Big)^{1+\varepsilon} dx\notag\\
&\leq&  c\,r^{n\varepsilon - \frac{n\varepsilon}{1+\varepsilon}} \Big(1 + \mathcal{G}(r)\Big)^{1+\varepsilon}\, E(u, r)\,\bigg(\frac{\|D\eta\|_{L^\infty(B_{r/8})}}{\bar{\varPsi}^{-1}_r\big(E(u, r)\big)}+1\bigg)^{q+1}\notag\\
&\leq&  c\, {\delta}_1(r)\,E(u, r)\,\bigg(\frac{\|D\eta\|_{L^\infty(B_{r/8})}}{\bar{\varPsi}^{-1}_r\big(E(u, r)\big)}+1\bigg)^{q+1}.
\end{eqnarray}
All in all, we put \eqref{est_I2_ft} and \eqref{est_I2_st} into \eqref{est_I2_1} to conclude
\begin{equation}\label{est_I2}
I_{2} \leq c\,{\delta}_1(r)\,E(u, r)\,\bigg(\frac{\|D\eta\|_{L^\infty(B_{r/8})}}{\bar{\varPsi}^{-1}_r\big(E(u, r)\big)}+1\bigg)^{q+1},
\end{equation}
where $c = c\,\big(\texttt{data}_0\big) \geq 1$.

Let us now put the estimates \eqref{est_I1}\,\eqref{est_I2} for $I_1,I_2$  together into \eqref{est_al_min_st}, to deduce that
\begin{equation*}
\fint_{B_{r/8}} \varPsi\big(x, |Du|_{\mathbb{A}^u}\big)\, dx \leq \fint_{B_{r/8}} \bar{\varPsi}_r\big(|Du + D\eta|_{\bar{\mathbb{A}}_r^u}\big)\, dx + c\,{\delta}_1(r) \,E(u, r)\,\bigg(\frac{\|D\eta\|_{L^\infty(B_{r/8})}}{\bar{\varPsi}^{-1}_r\big(E(u, r)\big)}+1\bigg)^{q+1}.
\end{equation*}
This combines \eqref{est_al_min} \eqref{est_al_min_ft} to conclude that $u$ is an almost minimizer of the local frozen functional \eqref{u_al_min}.
\end{proof}

Furthermore, let us reformulate the comparison estimate between $Du$ and $Dv$ in $L^1$-sense.

\begin{corollary}\label{cor_comp_1}
Under the same assumptions of Lemma \ref{lem_comp}, then for any $B_r \subset \Omega' \Subset \Omega$ with $ r \in (0, R_0)$ it holds
\begin{equation}\label{est_comp_1}
\int_{B_{r/8}} \big|Du - Dv\big|\, dx \leq c_2\,\Big(\omega_{\mathbb{A}}^{\,\mu}\big(r + \mathcal{E}(r)\big) + \mathcal{G}(r) + r^\frac{n\varepsilon^2}{1+\varepsilon}\Big)^{\kappa_2}\,\int_{B_{r/4}} |Du|\, dx ,
\end{equation}
where $c_2 = c_2\big(\texttt{data}_0\big) \geq 1$, $\mathcal{G}(r)$ as in \eqref{Gr} and $\mu = \min\{\frac{\varepsilon}{1+\varepsilon},\frac{1}{2}\}, {\kappa_2} = {\kappa_2}(p, q,\varepsilon) \in (0,1)$ for $\varepsilon <\varepsilon_0$ with $\varepsilon_0$ in Lemma \ref{lem_high_int}.
\end{corollary}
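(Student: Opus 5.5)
The plan is to convert the $V$-function estimate of Lemma \ref{lem_comp} into an $L^1$ bound by a pointwise splitting. I will write $\delta_1(r):=\omega_{\mathbb{A}}^{\,\mu}(r+\mathcal{E}(r))+\mathcal{G}(r)+r^{\frac{n\varepsilon^2}{1+\varepsilon}}$, which is the quantity appearing in Lemma \ref{lem_comp}. I will also write $\lambda:=\fint_{B_{r/4}}|Du|\,dx$, and fix a small parameter $\tau\in(0,1)$ to be chosen later as a power of $\delta_1(r)$.

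The first step is the pointwise inequality. By \eqref{est_V} and \eqref{equiv}, and since $\bar{\varPsi}_r\in\mathbf{\Phi}_{p,q}$, we have
$$|V_{\bar{\varPsi}_r}(Du)-V_{\bar{\varPsi}_r}(Dv)|^2\approx \bar{\varPsi}_r''(|Du|+|Dv|)\,|Du-Dv|^2.$$
I will then use the standard shift-type inequality for $N$-functions (cf. \cite[Lemma 2.2]{BaBL21}, already used above). For any $\tau\in(0,1)$ it gives
$$\bar{\varPsi}_r(|Du-Dv|)\le \tau\,\bar{\varPsi}_r(|Du|)+c\,\tau^{1-q}|V_{\bar{\varPsi}_r}(Du)-V_{\bar{\varPsi}_r}(Dv)|^2 .$$
Integrating over $B_{r/8}$, I will bound $\fint_{B_{r/8}}\bar{\varPsi}_r(|Du|)\,dx\le c\,\bar{\varPsi}_r(\lambda)$ using \eqref{ineq_rev_jense}, applied on the ball $B_{r/4}$ with half-ball $B_{r/8}$. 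The $V$-difference term is bounded by Lemma \ref{lem_comp}. This yields
$$\fint_{B_{r/8}}\bar{\varPsi}_r(|Du-Dv|)\,dx\le c\big(\tau+\tau^{1-q}\delta_1(r)^{\kappa_1}\big)\bar{\varPsi}_r(\lambda).$$
The choice $\tau=\delta_1(r)^{\kappa_1/q}$ balances the two terms and gives $c\,\delta_1(r)^{\kappa_1/q}\,\bar{\varPsi}_r(\lambda)$. Here $\delta_1(r)$ is bounded by a constant depending on $K$ for $r<R_0$, by \eqref{R1} and $\omega_{\mathbb{A}}\le1$. If $\delta_1\ge1$ the claim is trivial, because the energy bound \eqref{energy_v_L_1} already gives $\int|Du-Dv|\le c\int_{B_{r/4}}|Du|$. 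So I may assume $\delta_1<1$, which guarantees $\tau\in(0,1)$.

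Next I pass to $L^1$ by Jensen's inequality:
$$\bar{\varPsi}_r\Big(\fint_{B_{r/8}}|Du-Dv|\,dx\Big)\le c\,\delta_1^{\kappa_1/q}\,\bar{\varPsi}_r(\lambda).$$
Applying $\bar{\varPsi}_r^{-1}$ and using \eqref{est_<1}, in the form $\bar{\varPsi}_r^{-1}(\nu t)\le\nu^{1/q}\bar{\varPsi}_r^{-1}(t)$ for $\nu\le1$, with \eqref{est_>1} absorbing the constant $c$, I get
$$\fint_{B_{r/8}}|Du-Dv|\,dx\le c\,\delta_1(r)^{\kappa_1/q^2}\,\lambda.$$
Finally, multiplying by $|B_{r/8}|$ and using $|B_{r/4}|=2^n|B_{r/8}|$ gives \eqref{est_comp_1} with $\kappa_2=\kappa_1/q^2\in(0,1)$.

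The main obstacle I expect is the pointwise shift inequality with an explicit power of $\tau$. It must be derived, or cited, with constants depending only on $p$ and $q$, uniformly in $\bar a_r$. I would verify it by splitting into the cases $|Du-Dv|\le\tau'|Du|$, where monotonicity gives $\bar{\varPsi}_r(|Du-Dv|)\le\bar{\varPsi}_r(\tau'|Du|)$ and hence the $\tau\,\bar{\varPsi}_r(|Du|)$ term, and $|Du-Dv|>\tau'|Du|$. In the second case $|Du|+|Dv|\le c\,\tau'^{-1}|Du-Dv|$, so
$$\bar{\varPsi}_r''(|Du|+|Dv|)\,|Du-Dv|^2\gtrsim \tau'^{\,q-2}\,\bar{\varPsi}_r(|Du-Dv|)$$
up to constants. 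This holds by \eqref{equiv} together with the scaling bounds \eqref{est_<1} and \eqref{est_>1} for $t\mapsto\bar{\varPsi}_r(t)/t^2$. The exact exponent is immaterial; any positive power suffices.
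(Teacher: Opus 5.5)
Your proposal is correct and follows the same skeleton as the paper's proof:
\begin{itemize}
\item a pointwise bound $\bar{\varPsi}_r(|Du-Dv|)\le \tau\,(\text{energy})+C(\tau)\,|V_{\bar{\varPsi}_r}(Du)-V_{\bar{\varPsi}_r}(Dv)|^2$;
\item integration using Lemma \ref{lem_comp} and \eqref{ineq_rev_jense}, then optimization in $\tau$;
\item Jensen, and inversion of $\bar{\varPsi}_r$ via \eqref{est_<1}--\eqref{est_>1}.
\end{itemize}
The only real difference is the pointwise bound. The paper multiplies the elementary Young inequality $|Du-Dv|\le \frac12\big(\delta_2(|Du|+|Dv|)+\delta_2^{-1}(|Du|+|Dv|)^{-1}|Du-Dv|^2\big)$ by $\bar{\varPsi}_r'(|Du|+|Dv|)$. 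It uses $\bar{\varPsi}_r(t)\le t\,\bar{\varPsi}_r'(t)$ and the monotonicity of $\bar{\varPsi}_r'$. This needs no case distinction and is linear in $\delta_2^{-1}$. However, it produces $\bar{\varPsi}_r(|Dv|)$ on the right, which must be absorbed through the energy bound \eqref{energy_v}. Your shifted-$N$-function inequality keeps only $\bar{\varPsi}_r(|Du|)$. So the energy of $v$ enters only in your trivial case $\delta_1\ge 1$, at the price of a two-case verification.

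In that verification your exponent is wrong. In the case $|Du-Dv|>\tau'|Du|$, set $t=|Du-Dv|\le s=|Du|+|Dv|\le 3t/\tau'$. The only available lower bound is $\bar{\varPsi}_r(s)\ge (s/t)^{p}\bar{\varPsi}_r(t)$. Hence $\bar{\varPsi}_r''(s)\,t^2\gtrsim (s/t)^{p-2}\bar{\varPsi}_r(t)\gtrsim (\tau')^{(2-p)_+}\bar{\varPsi}_r(t)$, not $(\tau')^{q-2}\bar{\varPsi}_r(t)$. For $\bar{\varPsi}_r(t)=t^p$ with $1<p<2$ the factor is exactly $(\tau')^{2-p}$, so your claimed bound fails whenever $q<4-p$. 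With $\tau=(\tau')^p$, the correct coefficient of the $V$-term is $c\,\tau^{-(2-p)_+/p}$. Your stated $c\,\tau^{1-q}$ is false when $q-1<(2-p)/p$, e.g. for $\bar a_r=0$, $\varphi_1(t)=t^p$, and $p,q$ close to $1$.

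Since $(2-p)_+/p<1$ for $p>1$, you can simply use $c\,\tau^{-1}$. Then $\tau=\delta_1^{\kappa_1/2}$ reproduces exactly the paper's $\delta_2(r)$ and gives $\kappa_2=\kappa_1/(2q)$. As you anticipated, the conclusion is unaffected.
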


\begin{proof}
Let ${\delta}_2(r) := \Big(\omega_{\mathbb{A}}^{\,\mu}\big(r + \mathcal{E}(r)\big) + \mathcal{G}(r) + r^\frac{n\varepsilon^2}{1+\varepsilon}\Big)^\frac{\kappa_1}{2}$. It follows  that
\begin{eqnarray*}
|Du - Dv| &=& \Big({\delta}_2(r)\,\big(|Du| + |Dv|\big)\Big)^\frac{1}{2}\cdot\Big({\delta}_2(r)\,\big(|Du| + |Dv|\big)\Big)^{-\frac{1}{2}}\,|Du - Dv|\notag\\
&\leq& \frac{1}{2}\bigg({\delta}_2(r)\,\big(|Du| + |Dv|\big) + \Big({\delta}_2(r)\,\big(|Du| + |Dv|\big)\Big)^{-1}\,|Du - Dv|^2\bigg)\,.
\end{eqnarray*}
Since $t\bar{\varPsi}'_r(t) \approx \bar{\varPsi}_r(t)$ and \eqref{est_V}, we have that
\begin{eqnarray*}
\bar{\varPsi}_r\big(|Du - Dv|\big)
&\leq & c\,\bar{\varPsi}'_r\big(|Du| + |Dv|\big)\,|Du - Dv| \notag\\[6pt]
&\leq&  c\bar{\varPsi}'_r\big(|Du| + |Dv|\big)\,\bigg({\delta}_2(r)\,\big(|Du| + |Dv|\big)  +  \Big({\delta}_2(r)\big({|Du| + |Dv|}\big)\Big)^{-1}|Du - Dv|^2\bigg)\notag\\
&=&  c\,\bigg({\delta}_2(r)\bar{\varPsi}'_r\big(|Du| + |Dv|\big)\big(|Du| + |Dv|\big)  +  \big({\delta}_2(r)\big)^{-1}\frac{\bar{\varPsi}'_r\big(|Du| + |Dv|\big)}{|Du| + |Dv|}|Du - Dv|^2\bigg)\notag\\
&\leq&  c\,\bigg({\delta}_2(r)\bar{\varPsi}_r\big(|Du| + |Dv|\big)  +  \big({\delta}_2(r)\big)^{-1}\big|V_{\bar{\varPsi}_r}(Du) - V_{\bar{\varPsi}_r}(Dv)\big|^2\bigg)
\end{eqnarray*}
In view of the convexity of $t \mapsto \bar{\varPsi}_r(t)$ and \eqref{energy_v}, we deduce that
\begin{eqnarray*}
&&\bar{\varPsi}_r\,\bigg(\fint_{B_{r/8}} |Du -Dv|\, dx\bigg)
\leq \fint_{B_{r/8}} \bar{\varPsi}_r\big(|Du - Dv|\big) \, dx \notag\\
&\leq&  c\, \bigg({\delta}_2(r)\fint_{B_{r/8}} \bar{\varPsi}_r\big(|Du|+ |Dv|\big) \, dx + \big({\delta}_2(r)\big)^{-1}\fint_{B_{r/8}} \big|V_{\bar{\varPsi}_r}(Du) - V_{\bar{\varPsi}_r}(Dv)\big|^2 \, dx\bigg)\notag\\
&\leq&  c\, \bigg({\delta}_2(r)\fint_{B_{r/8}} \bar{\varPsi}_r\big(|Du|\big) \, dx + {\delta}_2(r)\fint_{B_{r/8}} \bar{\varPsi}_r\big(|Dv|\big) \, dx + \big({\delta}_2(r)\big)^{-1}\fint_{B_{r/8}} \big|V_{\bar{\varPsi}_r}(Du) - V_{\bar{\varPsi}_r}(Dv)\big|^2 \, dx\bigg)\notag\\
&\leq&  c\, \bigg({\delta}_2(r)\fint_{B_{r/8}} \bar{\varPsi}_r\big(|Du|\big) \, dx + \big({\delta}_2(r)\big)^{-1}\fint_{B_{r/8}} \big|V_{\bar{\varPsi}_r}(Du) - V_{\bar{\varPsi}_r}(Dv)\big|^2 \, dx\bigg)\,.
\end{eqnarray*}
Thanks to  \eqref{ineq_rev_jense} and
Lemma \ref{lem_comp}, we further obtain that
\begin{eqnarray*}
&&\bar{\varPsi}_r\,\bigg(\fint_{B_{r/8}} |Du -Dv|\, dx\bigg)\notag\\
&\leq&  c\, \Bigg(\,{\delta}_2(r)\,\bigg(\fint_{B_{r/8}}\Big(\bar{\varPsi}_r\big( |Du|\big)\Big) \, dx\bigg) + \big({\delta}_2(r)\big)^{-1} {c}\,\big({\delta}_2(r)\big)^2\,\bar{\varPsi}_r\,\bigg(\fint_{B_{r/4}} |Du| \, dx\bigg) \Bigg) \notag\\
&\leq& c\, {\delta}_2(r)\,\bar{\varPsi}_r\,\bigg(\fint_{B_{r/4}} |Du| \, dx \bigg)\,,
\end{eqnarray*}
We now use \eqref{est_<1} \eqref{est_>1} to get that
$c\, {\delta}_2(r)\,\bar{\varPsi}_r\,\Big(\fint_{B_{r/4}} |Du| \, dx \Big)
\leq  \bar{\varPsi}_r\,\Big(\big(c\, {\delta}_2(r)\big)^\frac{1}{q}\fint_{B_{r/4}} |Du| \, dx \Big)$ for $0<c\, {\delta}_2(r)\le 1$, and $c\, {\delta}_2(r)\,\bar{\varPsi}_r\,\Big(\fint_{B_{r/4}} |Du| \, dx \Big)
\leq  \bar{\varPsi}_r\,\Big(\big(c\, {\delta}_2(r)\big)^\frac{1}{p}\fint_{B_{r/4}} |Du| \, dx \Big)$ for $c\, {\delta}_2(r)>1$. Then it implies
\begin{equation*}
\bar{\varPsi}_r\,\bigg(\fint_{B_{r/8}} |Du -Dv|\, dx\bigg) \leq  \bar{\varPsi}_r\,\bigg(\Big(c\, {\delta}_2(r)\Big)^\frac{1}{s}\fint_{B_{r/4}} |Du| \, dx \bigg)
\end{equation*}
with $s = s(p, q) > 1$.
Acting $\bar{\varPsi}^{-1}_r(\cdot)$ on both sides, we conclude
\begin{equation*}
\fint_{B_{r/8}} |Du -Dv|\, dx \leq \Big(c\, {\delta}_2(r)\Big)^\frac{1}{s} \fint_{B_{r/4}} |Du|\, dx,
\end{equation*}
which yields the desired result by ${\kappa_2} = \frac{\kappa_1}{2s}$.
\end{proof}

In the borderline setting, we additionally assume that $a(\cdot)$ is H\"older continuous with the gap condition \eqref{gap_2}. In this way, we are to give a new comparison estimate between $Du$ and $Dv$.

\begin{lemma}\label{lem_comp_2}
For ${\varphi_1}, {\varphi_2} \in \mathbf{\Phi}_{p,q}$ with ${\varphi_1} \prec  {\varphi_2}$ and $\mathbb{A}(\cdot,\cdot)$ satisfying \eqref{cond_A}, let $u \in W^{1,1}(\Omega,\mathbb{R}^N)$ be a local minimizer of the functional \eqref{main_funl}, and $v \in u + W_0^{1,1}\big(B_{r/8}, \mathbb{R}^N\big)$ be a minimizer of the local frozen functional \eqref{fun_v}. If $\mathbb{A}(\cdot,\cdot)$ is uniformly continuous and $a(\cdot)$ is H\"older continuous with  $\omega_a(r) = r^{\,\beta}$ satisfying the gap condition \eqref{gap_2}, then for any $B_r \Subset \Omega$  we have
\begin{equation*}
\fint_{B_{r/8}} \big|V_{\bar{\varPsi}_r}(Du) - V_{\bar{\varPsi}_r}(Dv)\big|^2\, dx \leq c_3\,\bigg(\omega_{\mathbb{A}}^{\,\mu}\big(r + \mathcal{E}(r)\big) + r^\lambda\Big(\mathcal{G}(r^\frac{1}{1+\varepsilon}) + \mathcal{G}(r) + 1\Big) \bigg)^{\kappa_3}\,\bar{\varPsi}_r\,\bigg(\fint_{B_{r/4}} |Du|\, dx\bigg)
\end{equation*}
where $c_3 = c_3\,\big(\texttt{data}_0\big) \geq 1$, $\mu = \min\{\frac{\varepsilon}{1+\varepsilon},\frac{1}{2}\}$, $\lambda = \min\big\{\frac{\beta\varepsilon}{1+\varepsilon}, \frac{n\varepsilon(\varepsilon_0-\varepsilon)}{(1+\varepsilon)(1+\varepsilon_0)}\big\} > 0$, ${\kappa_3} = \frac{\varepsilon p}{q + 1+ \varepsilon p}$ for $\varepsilon <\varepsilon_0$ with $\varepsilon_0$ as in Lemma \ref{lem_high_int}, and $\mathcal{G}(r)$ as shown in \eqref{Gr}.
\end{lemma}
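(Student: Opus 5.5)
The plan is to follow the proof of Lemma \ref{lem_comp} verbatim, changing only the treatment of the nonstandard phase. Set
\[
\delta_3(r):=\omega_{\mathbb{A}}^{\,\mu}\big(r+\mathcal{E}(r)\big)+r^\lambda\Big(\mathcal{G}(r^{\frac{1}{1+\varepsilon}})+\mathcal{G}(r)+1\Big).
\]
The goal is the almost-minimality inequality \eqref{u_al_min} with $\delta_1(r)$ replaced by $\delta_3(r)$. Granting it, Lemma \ref{rem_app} and the harmonic approximation Lemma \ref{lem_app} with $b=q+1$ give the $V$-estimate with exponent $\kappa_3=\frac{\varepsilon p}{q+1+\varepsilon p}$. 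Combining this with \eqref{est_V} and \eqref{ineq_rev} converts $E(u,r)$ into $\bar{\varPsi}_r\big(\fint_{B_{r/4}}|Du|\,dx\big)$, exactly as before. The frozen-coefficient terms need no change: the first term of \eqref{est_al_min} and $J_2$, $J_3$ only involve $\omega_{\mathbb{A}}$ and are bounded by $c\,\omega_{\mathbb{A}}^{\mu}(r+\mathcal{E}(r))\,E(u,r)(\cdots+1)^{q+1}$ by the same arguments.

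The new ingredient is the splitting of $B_{r/8}$ for $w=u+\eta$. Since the term $r^{\frac{n\varepsilon^2}{1+\varepsilon}}$ is no longer allowed, I would use the threshold
\[
\mathcal{D}_1=\big\{|Dw|\le \varphi_1^{-1}(r^{-\frac{n}{1+\varepsilon}})\big\},\qquad \mathcal{D}_2=\big\{|Dw|> \varphi_1^{-1}(r^{-\frac{n}{1+\varepsilon}})\big\}.
\]
\textbf{Estimate on $\mathcal{D}_1$.} Since $\varphi_2/\varphi_1$ is monotone, $J_1$ is bounded by
\[
c\,\omega_a(r)\,\frac{\varphi_2\circ\varphi_1^{-1}(\rho^{-n})}{\rho^{-n}}\fint\bar{\varPsi}_r(|Dw|),\qquad \rho=r^{\frac1{1+\varepsilon}}.
\]
Using $\omega_a(r)=r^\beta=\rho^\beta\, r^{\frac{\beta\varepsilon}{1+\varepsilon}}$, the prefactor equals $r^{\frac{\beta\varepsilon}{1+\varepsilon}}\mathcal{G}(r^{\frac1{1+\varepsilon}})\le r^\lambda\mathcal{G}(r^{\frac1{1+\varepsilon}})$. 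This is exactly where Hölder continuity of $a$ enters, and the rest of $J_1$ proceeds with \eqref{est_Deta}.

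\textbf{Estimate on $\mathcal{D}_2$.} Here $\varPsi(x,|Dw|)^{-\varepsilon}\le r^{\frac{n\varepsilon}{1+\varepsilon}}$, so
\[
I_2\lesssim r^{\frac{n\varepsilon}{1+\varepsilon}}\fint_{B_{r/8}}\Big(\varPsi(x,|Du|)^{1+\varepsilon}+\varPsi(x,|D\eta|)^{1+\varepsilon}\Big)dx.
\]
I take $\varepsilon<\varepsilon_0$ and bound the extra factor $\big(\fint\varPsi^{1+\varepsilon}(x,|Du|)\big)^{\frac{\varepsilon}{1+\varepsilon}}$ by Hölder against the $L^{1+\varepsilon_0}$ bound \eqref{high_norm}:
\[
\Big(\fint\varPsi^{1+\varepsilon}\Big)^{\frac{\varepsilon}{1+\varepsilon}}\le\Big(\fint\varPsi^{1+\varepsilon_0}\Big)^{\frac{\varepsilon}{1+\varepsilon_0}}\lesssim M^{\varepsilon}\, r^{-\frac{n\varepsilon}{1+\varepsilon_0}}.
\]
Hence $I_2$ carries the factor $r^{\frac{n\varepsilon}{1+\varepsilon}-\frac{n\varepsilon}{1+\varepsilon_0}}$. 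Its exponent is $\frac{n\varepsilon(\varepsilon_0-\varepsilon)}{(1+\varepsilon)(1+\varepsilon_0)}\ge\lambda$, which is the second entry in the definition of $\lambda$.

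For the $D\eta$ part I repeat \eqref{Psi-Psi-1}. The factor $\big(E(u,r)\big)^{\varepsilon}$ is again estimated via the $L^{1+\varepsilon_0}$ bound, giving $r^{-\frac{n\varepsilon}{1+\varepsilon_0}}$, and the remaining factor $(1+\mathcal{G}(r))^{1+\varepsilon}\le c(1+\mathcal{G}(r))$ by \eqref{R1}. This produces the $r^\lambda(\mathcal{G}(r)+1)$ contribution.

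The main obstacle is the exponent bookkeeping in $I_2$. Once the threshold moves from $r^{-n}$ to $r^{-n/(1+\varepsilon)}$, the gain $r^{\frac{n\varepsilon}{1+\varepsilon}}$ must beat the loss from $E(u,r)$, and a positive power survives only by using the strictly larger exponent $\varepsilon_0$ from Lemma \ref{lem_high_int}. I expect the cross terms between $\varPsi(x,\cdot)$ and $\bar{\varPsi}_r$ to need the same $\mathcal{G}$-trick at scale $r$, which is why $\mathcal{G}(r)$ also appears in the statement.
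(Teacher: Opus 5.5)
Your proposal is correct and follows the paper's proof essentially step for step. You use the same $\delta_3(r)$, the same shifted threshold $\varphi_1^{-1}(r^{-n/(1+\varepsilon)})$, and the same reuse of the frozen-coefficient estimates from Lemma~\ref{lem_comp}; you rewrite the $\mathcal{D}_1$ prefactor as $r^{\beta\varepsilon/(1+\varepsilon)}\mathcal{G}(r^{1/(1+\varepsilon)})$, trade $r^{n\varepsilon/(1+\varepsilon)}$ on $\mathcal{D}_2$ against the $L^{1+\varepsilon_0}$ bound \eqref{high_norm} to obtain the exponent $\frac{n\varepsilon(\varepsilon_0-\varepsilon)}{(1+\varepsilon)(1+\varepsilon_0)}$, and let $\mathcal{G}(r)$ enter through \eqref{Psi-Psi-1} in the $D\eta$ term, exactly as the paper does.
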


\begin{proof}
We follow the same strategy as in the proof of Lemma \ref{lem_comp} with only minor modifications based on the argument of harmonic approximation. To this end, it suffices to verify that for any $\eta \in W^{1,\infty}_0(B_{r/8}, \mathbb{R}^N)$ with
\begin{equation}\label{u_al_min_2}
\fint_{B_{r/8}} \bar{\varPsi}_r\big(|Du|_{\bar{\mathbb{A}}_r^u}\big)\, dx   \leq   \fint_{B_{r/8}} \bar{\varPsi}_r\big(|Du + D\eta|_{\bar{\mathbb{A}}_r^u}\big)\, dx  +  c\, {\delta}_3(r) E(u, r)\,\bigg(\frac{\|D\eta\|_{L^\infty(B_{r/8})}}{\bar{\varPsi}^{-1}_r\big(E(u, r)\big)}+1\bigg)^{q+1},
\end{equation}
where ${\delta}_3(r) = \omega_{\mathbb{A}}^{\,\mu}\big(r + \mathcal{E}(r)\big) + r^\lambda \big(\mathcal{G}(r^\frac{1}{1+\varepsilon}) + \mathcal{G}(r) +  1\big)$ and $E(u, r)$ as in \eqref{def_E}. This leads to the desired result due to Lemma \ref{lem_app}.

To prove \eqref{u_al_min_2},  let us decompose it into the following:
\begin{equation}\label{est2_al_min}
\fint_{B_{r/8}} \bar{\varPsi}_r\big(|Du|_{\bar{\mathbb{A}}_r^u}\big)\, dx
\leq    \fint_{B_{r/8}} \big|\bar{\varPsi}_r\big(|Du|_{\bar{\mathbb{A}}_r^u}\big) -  \bar{\varPsi}_r\big(|Du|_{\mathbb{A}^u}\big)\big|\, dx + \fint_{B_{r/8}} \varPsi\big(x, |Du|_{\mathbb{A}^u}\big)\, dx.
\end{equation}
The first term of the right-hand side is estimated as \eqref{est_al_min_ft}. Therefore, we only consider the second term of \eqref{est2_al_min}. By the local minimality of $u$, then  for any $\eta \in W^{1,\infty}_0(B_{r/8}, \mathbb{R}^N)$  we have
\begin{equation*}
\fint_{B_{r/8}} \varPsi\big(x, |Du|_{\mathbb{A}^u}\big)\, dx \leq \fint_{B_{r/8}} \varPsi\big(x, |Du + D\eta|_{\mathbb{A}^{u+\eta}}\big)\, dx.
\end{equation*}
Let $w = u + \eta$ and write
\begin{equation*}
\mathcal{D}_1 = {\Big\{x \in B_{r/8} : |Dw| \leq \varphi_1^{-1}\big(r^\frac{-n}{1+\varepsilon}\big)\Big\}},
\qquad
\mathcal{D}_2 = {\Big\{x \in B_{r/8} : |Dw| > \varphi_1^{-1}\big(r^\frac{-n}{1+\varepsilon}\big)\Big\}};
\end{equation*}
then we get
\begin{eqnarray}\label{est2_alm_min}
\fint_{B_{r/8}} \varPsi\big(x, |Du|_{\mathbb{A}^u}\big)\, dx
&\leq& \frac{1}{|B_{r/8}|}\int_{\mathcal{D}_1} \varPsi\big(x, |Dw|_{\mathbb{A}^w}\big)\, dx + \frac{1}{|B_{r/8}|}\int_{\mathcal{D}_2} \varPsi\big(x, |Dw|_{\mathbb{A}^w}\big)\, dx \notag \\[6pt]
&:=& H_1 + H_2.
\end{eqnarray}

By direct calculation, we immediately show $H_{1}$ as follows:
\begin{eqnarray}\label{est_P1_1}
H_1
&\leq&  \frac{1}{|B_{r/8}|}\int_{\mathcal{D}_1} \big|{\varPsi}\big(x,
|Dw|_{\mathbb{A}^w}\big)-\bar{\varPsi}_r\big(|Dw|_{\mathbb{A}^w}\big)\big|\, dx +  \fint_{B_{r/8}} \big|\bar{\varPsi}_r\big(|Dw|_{\mathbb{A}^w}\big)-\bar{\varPsi}_r\big(|Dw|_{\mathbb{A}^u}\big)\big|\, dx \notag\\
&&+\,  \fint_{B_{r/8}}
\big|\bar{\varPsi}_r\big(|Dw|_{\mathbb{A}^u}\big)-\bar{\varPsi}_r\big(|Dw|_{\bar{\mathbb{A}}_r^u}\big)\big|\, dx + \fint_{B_{r/8}} \bar{\varPsi}_r\big(|Dw|_{\bar{\mathbb{A}}_r^u}\big)\, dx\notag\\
&:=& P_1 + P_2 + P_3 +  \fint_{B_{r/8}} \bar{\varPsi}_r\big(|Du +
D\eta|_{\bar{\mathbb{A}}_r^u}\big)\,dx.
\end{eqnarray}
Here, the estimates of $P_2$ and $P_3$ are just the same as in \eqref{est_I1_st} and \eqref{est_I1_tt}. In the sequel, it suffices to focus on the estimate of $P_1$. Obviously, $|Dw| \leq \varphi_1^{-1}(r^{-\frac{n}{1+\varepsilon}})$ implies $\frac{{\varphi_2}(|Dw|)}{{\varphi_1}(|Dw|)} \leq \frac{{\varphi_2}(\varphi_1^{-1}(r^{-\frac{n}{1+\varepsilon}}))}{{\varphi_1}(\varphi_1^{-1}(r^{-\frac{n}{1+\varepsilon}}))} = \frac{{\varphi_2}\circ \varphi_1^{-1}(r^{-\frac{n}{1+\varepsilon}})}{r^{-\frac{n}{1+\varepsilon}}}$ due to the non-decreasing property of $t \mapsto \frac{\varphi_2(t)}{\varphi_1(t)}$, and we obtain the following facts:

\begin{eqnarray*}
&&\int_{\mathcal{D}_1} \big|{\varPsi}\big(x, |Dw|_{\mathbb{A}^w}\big)-\bar{\varPsi}_r\big(|Dw|_{\mathbb{A}^w}\big)\big|\, dx\notag\\
&\leq& \int_{\mathcal{D}_1} |a(x) - \bar{a}_r| \,{\varphi_2}\big(|Dw|_{\mathbb{A}^w}\big)\, dx \notag\\
&\leq&  c\int_{\mathcal{D}_1} \omega_a(r) \,\frac{{\varphi_2}\big(|Dw|\big)}{{\varphi_1}\big(|Dw|\big)} \,{\varphi_1}\big(|Dw|\big)\,  dx \notag\\
&\leq&  c\int_{\mathcal{D}_1} \omega_a(r) \,\frac{{\varphi_2} \circ \varphi_1^{-1}\big(r^{-\frac{n}{1+\varepsilon}}\big)}{r^{-\frac{n}{1+\varepsilon}}} \,{\varphi_1}\big(|Dw|\big)\,  dx.
\end{eqnarray*}
Recalling  $a(\cdot)$ is H\"older continuous with  $\omega_a(r) = r^{\,\beta}$, it yields $\omega_a(r) \,\frac{{\varphi_2} \circ \varphi_1^{-1}\big(r^{-\frac{n}{1+\varepsilon}}\big)}{r^{-\frac{n}{1+\varepsilon}}}
=\frac{\omega_a(r)}{\omega_a(r^\frac{1}{1+\varepsilon})}\mathcal{G}(r^\frac{1}{1+\varepsilon})
=r^\frac{\beta\varepsilon}{1+\varepsilon}\mathcal{G}(r^\frac{1}{1+\varepsilon})$. By ${\varphi_1}(|Dw|) \leq \bar{\varPsi}_r(|Dw|)\lesssim \bar{\varPsi}_r\big(|Du|\big) + \bar{\varPsi}_r\big(|D\eta|\big)$, we infer
$$
\int_{\mathcal{D}_1} \big|{\varPsi}\big(x, |Dw|_{\mathbb{A}^w}\big)-\bar{\varPsi}_r\big(|Dw|_{\mathbb{A}^w}\big)\big|\, dx
\le c\,r^\frac{\beta\varepsilon}{1+\varepsilon} \,\mathcal{G}(r^\frac{1}{1+\varepsilon}) \,\int_{B_{r/8}} \Big(\bar{\varPsi}_r\big(|Du|\big) + \bar{\varPsi}_r\big(|D\eta|\big)\Big)\,  dx.
$$
Since $r^{\frac{\beta\varepsilon}{1+\varepsilon}}\mathcal{G}(r^\frac{1}{1+\varepsilon})\le r^\lambda\mathcal{G}(r^\frac{1}{1+\varepsilon})\leq {\delta}_3(r)$, by
$\fint_{B_{r/8}} \bar{\varPsi}_r\big(|Du|\big)\, dx \leq  E(u,r)$ and $\fint_{B_{r/8}} \,\bar{\varPsi}_r\big(|D\eta|\big)\,  dx \leq \, E(u, r)\,\Big(\frac{\|D\eta\|_{L^\infty(B_{r/8})}}{\bar{\varPsi}^{-1}_r\big(E(u, r)\big)}+1\Big)^{q}$ in \eqref{est_Deta}, we deduce that
\begin{eqnarray*}
P_1
\leq  c\,{\delta}_3(r)\,E(u, r)\,\bigg(\frac{\|D\eta\|_{L^\infty(B_{r/8})}}{\bar{\varPsi}^{-1}_r\big(E(u, r)\big)}+1\bigg)^{q}.
\end{eqnarray*}
Putting the estimates of $P_2,P_3$ as in \eqref{est_I1_st}\,\eqref{est_I1_tt} and $P_1$ into \eqref{est_P1_1}, it yields
\begin{equation}\label{est_P1}
H_1 \leq  c\,{\delta}_3(r)\,E(u, r)\,\bigg(\frac{\|D\eta\|_{L^\infty(B_{r/8})}}{\bar{\varPsi}^{-1}_r\big(E(u, r)\big)}+1\bigg)^{q+1} + \fint_{B_{r/8}} \bar{\varPsi}_r\big(|Dw|_{\bar{\mathbb{A}}_r^u}\big)\, dx.
\end{equation}

For $H_2$, we use a similar way as in \eqref{est_I2_1}  to obtain
\begin{eqnarray}\label{est_P2_1}
H_2
\leq  c\,r^\frac{n\varepsilon}{1+\varepsilon}\fint_{B_{r/8}} \bigg(\Big({\varPsi}\big(x, |Du|\big)\Big)^{1+\varepsilon} +  \Big({\varPsi}\big(x, |D\eta|\big)\Big)^{1+\varepsilon}\bigg)\,\,dx.
\end{eqnarray}
By considering the higher integrability of $Du$ as in \eqref{ineq_rev}, it yields that
\begin{eqnarray*}
\fint_{B_{r/8}} \Big({\varPsi}\big(x, |Du|\big)\Big)^{1+\varepsilon} dx
&\leq&  c\, \bigg(\fint_{B_{r/4}} \Big({\varPsi}\big(x, |Du|\big)\Big)^{1+\varepsilon_0} dx\bigg)^\frac{\varepsilon}{1+\varepsilon_0}E(u, r).
\end{eqnarray*}
Therefore,
\begin{eqnarray}\label{est_P2_ft}
r^\frac{n\varepsilon}{1+\varepsilon}\fint_{B_{r/8}} \Big({\varPsi}\big(x, |Du|\big)\Big)^{1+\varepsilon} dx
&\leq& c\,r^\frac{n\varepsilon}{1+\varepsilon} \bigg(\fint_{B_{r/4}} \Big({\varPsi}\big(x, |Du|\big)\Big)^{1+\varepsilon_0} dx\bigg)^\frac{\varepsilon}{1+\varepsilon_0}E(u, r)\notag\\
&=&   c\,r^{\frac{n\varepsilon(\varepsilon_0-\varepsilon)}{(1+\varepsilon)(1+\varepsilon_0)}}\,
\|{\varPsi}\big(x,|Du|\big)\|^\varepsilon_{L^{1+\varepsilon_0}(B_{r/4})}E(u, r)\notag\\[6pt]
&\leq&  c\,{\delta}_3(r)\,{M}^\varepsilon E(u, r)\,,
\end{eqnarray}
where we used $r^{\frac{n\varepsilon(\varepsilon_0-\varepsilon)}{(1+\varepsilon)(1+\varepsilon_0)}} \leq r^\lambda \leq {\delta}_3(r)$ for $\lambda = \min\big\{\frac{\beta\varepsilon}{1+\varepsilon}, \frac{n\varepsilon(\varepsilon_0-\varepsilon)}{(1+\varepsilon)(1+\varepsilon_0)}\big\} > 0$ and $\|{\varPsi}\big(x,|Du|\big)\|_{L^{1+\varepsilon_0}(B_{r/4})} \leq {M}$ in \eqref{high_norm} as in the last inequality.

On the other hand, we use the same way as in \eqref{est_I2_st} and $\big(E(u, r)\big)^\varepsilon \leq c r^{\frac{-n\varepsilon}{1+\varepsilon_0}} \|\varPsi(x, |Du|)\|^\varepsilon_{L^{1+\varepsilon_0}(B_{r/8})} $ to obtain that
\begin{eqnarray}\label{est_P2_st}
r^\frac{n\varepsilon}{1+\varepsilon}\fint_{B_{r/8}} \Big({\varPsi}\big(x, |D\eta|\big)\Big)^{1+\varepsilon} dx
&\leq&  c\,r^{\frac{n\varepsilon(\varepsilon_0-\varepsilon)}{(1+\varepsilon)(1+\varepsilon_0)}}\Big(1+\mathcal{G}(r)\Big)^{1 +\varepsilon}\, E(u, r)\,\bigg(\frac{\|D\eta\|_{L^\infty(B_{r/8})}}{\bar{\varPsi}^{-1}_r\big(E(u, r)\big)}\bigg)^{q+1}\notag\\
&\leq&  c\,{\delta}_3(r)\,E(u, r)\,\bigg(\frac{\|D\eta\|_{L^\infty(B_{r/8})}}{\bar{\varPsi}^{-1}_r\big(E(u, r)\big)}\bigg)^{q+1},
\end{eqnarray}
where  we used $r^{\frac{n\varepsilon(\varepsilon_0-\varepsilon)}{(1+\varepsilon)(1+\varepsilon_0)}} \leq {\delta}_3(r)$ in the last step.
Putting \eqref{est_P2_ft}\,\eqref{est_P2_st} into \eqref{est_P2_1} yields
\begin{equation}\label{est_P2}
H_2 \leq c\,{\delta}_3(r)\,E(u, r)\,\bigg(\frac{\|D\eta\|_{L^\infty(B_{r/8})}}{\bar{\varPsi}^{-1}_r\big(E(u, r)\big)}+1\bigg)^{q+1}.
\end{equation}
Let us replace \eqref{est2_alm_min} by the estimates of $H_1,H_2$ shown in \eqref{est_P1} \eqref{est_P2}, to get
\begin{equation*}
\fint_{B_{r/8}} \varPsi\big(x, |Du|_{\mathbb{A}^u}\big)\, dx \leq \fint_{B_{r/8}} \bar{\varPsi}_r\big(|Du + D\eta|_{\bar{\mathbb{A}}_r^u}\big)\, dx +  c\,{\delta}_3(r) \,E(u, r)\,\bigg(\frac{\|D\eta\|_{L^\infty(B_{r/8})}}{\bar{\varPsi}^{-1}_r\big(E(u, r)\big)}+1\bigg)^{q+1},
\end{equation*}
which together with \eqref{est_al_min_ft}  inserts into \eqref{est2_al_min}, to conclude that $u$ is an almost minimizer of the local frozen functional $\int_{B_{r/8}} \bar{\varPsi}_r(|Dw|_{\bar{\mathbb{A}}_r^u})\,dx$ in the sense of \eqref{u_al_min_2}. This completes the proof.
\end{proof}

The following  indicates the comparison estimate in $L^1$-sense while $a(x)$ is H\"older continuous with the gap condition \eqref{gap_2}. This follows from the same reasoning as in the proof of Corollary \ref{cor_comp_1}, with ${\delta}_2(r)$ only replaced by ${\delta}_4(r)=\Big(\omega_{\mathbb{A}}^{\,\mu}(r+\mathcal{E}(r))+r^{\lambda}\big(\mathcal{G}(r^\frac{1}{1+\varepsilon}) +\mathcal{G}(r)+1\big)
\Big)^{\kappa_3/2}$ in Lemma \ref{lem_comp_2}, and we omit its proof.

\begin{corollary}\label{cor_comp_2}
Under the same assumptions as Lemma \ref{lem_comp_2}, then for any $B_r \Subset \Omega$ we have
\begin{equation}\label{est_comp_2}
\int_{B_{r/8}} \big|Du - Dv\big|\, dx \leq c_4\,\bigg(\omega_{\mathbb{A}}^{\,\mu}\big(r + \mathcal{E}(r)\big) + r^\lambda\Big(\mathcal{G}(r^\frac{1}{1+\varepsilon}) + \mathcal{G}(r) + 1\Big)\bigg)^{\kappa_4}\int_{B_{r/4}} |Du|\, dx  ,
\end{equation}
where $c_4 = c_4\,\big(\texttt{data}_0\big) \geq 1$, $\mu = \min\{\frac{\varepsilon}{1+\varepsilon},\frac{1}{2}\}$, $\lambda = \min\big\{\frac{\beta\varepsilon}{1+\varepsilon}, \frac{n\varepsilon(\varepsilon_0-\varepsilon)}{(1+\varepsilon)(1+\varepsilon_0)}\big\} > 0$, $\kappa_4 = \kappa_4(p, q,\varepsilon) \in (0,1)$ for $\varepsilon < \varepsilon_0$ with $\varepsilon_0$ in Lemma \ref{lem_high_int}, and  $\mathcal{G}(r)$ as shown in \eqref{Gr}.
\end{corollary}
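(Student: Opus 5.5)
The plan is to repeat, almost line by line, the proof of Corollary \ref{cor_comp_1}. The only change is that the smallness parameter coming from Lemma \ref{lem_comp} is replaced by the one from Lemma \ref{lem_comp_2}. Accordingly, I set
\[
\delta_4(r):=\Big(\omega_{\mathbb{A}}^{\,\mu}\big(r+\mathcal{E}(r)\big)+r^{\lambda}\big(\mathcal{G}(r^{\frac{1}{1+\varepsilon}})+\mathcal{G}(r)+1\big)\Big)^{\kappa_3/2},
\]
so that Lemma \ref{lem_comp_2} reads
\[
\fint_{B_{r/8}}|V_{\bar\varPsi_r}(Du)-V_{\bar\varPsi_r}(Dv)|^2\,dx\le c_3\,\delta_4(r)^2\,\bar\varPsi_r\Big(\fint_{B_{r/4}}|Du|\,dx\Big).
\]

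\emph{Pointwise splitting.} First I would use Young's inequality,
\[
|Du-Dv|\le \tfrac12\Big(\delta_4(|Du|+|Dv|)+\delta_4^{-1}(|Du|+|Dv|)^{-1}|Du-Dv|^2\Big).
\]
Next I apply $\bar\varPsi_r$, using $\bar\varPsi_r(|Du-Dv|)\lesssim \bar\varPsi_r'(|Du|+|Dv|)\,|Du-Dv|$, together with $t\bar\varPsi_r'(t)\approx\bar\varPsi_r(t)$ and \eqref{est_V}. This gives
\[
\bar\varPsi_r(|Du-Dv|)\le c\,\delta_4\,\bar\varPsi_r(|Du|+|Dv|)+c\,\delta_4^{-1}|V_{\bar\varPsi_r}(Du)-V_{\bar\varPsi_r}(Dv)|^2 .
\]

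\emph{Averaging and absorbing.} Then I average over $B_{r/8}$ and apply Jensen's inequality on the left. On the right, the $Dv$-term is controlled by the energy bound \eqref{energy_v}. The $Du$-term is controlled by the reverse Jensen inequality \eqref{ineq_rev_jense}, applied on the appropriate balls; this bounds $\fint_{B_{r/8}}\bar\varPsi_r(|Du|)$ by $c\,\bar\varPsi_r(\fint_{B_{r/4}}|Du|)$. The $V$-term is handled by Lemma \ref{lem_comp_2}, which contributes $\delta_4^{-1}\cdot\delta_4^2=\delta_4$. Altogether,
\[
\bar\varPsi_r\Big(\fint_{B_{r/8}}|Du-Dv|\Big)\le c\,\delta_4\,\bar\varPsi_r\Big(\fint_{B_{r/4}}|Du|\Big).
\]

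\emph{Inverting.} Finally I move the constant inside $\bar\varPsi_r$ by \eqref{est_<1}/\eqref{est_>1}, distinguishing $c\delta_4\le1$ (exponent $1/q$) from $c\delta_4>1$ (exponent $1/p$). This gives a factor $(c\delta_4)^{1/s}$ with $s=s(p,q)$. Applying $\bar\varPsi_r^{-1}$ and multiplying by $|B_{r/8}|\approx|B_{r/4}|$ then yields \eqref{est_comp_2} with $\kappa_4=\kappa_3/(2s)\in(0,1)$.

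The main point needing care is the use of \eqref{ineq_rev_jense} and Lemma \ref{lem_cacc}-type bounds on the slightly different radii, so that every right-hand side is expressed through $\fint_{B_{r/4}}|Du|$. I would check the index bookkeeping there, making the reverse Hölder radii consistent as was done in Lemma \ref{lem_comp}. Also, when $\delta_4$ is large the estimate is trivial anyway, because $\fint|Dv|\lesssim\fint_{B_{r/4}}|Du|$ by \eqref{energy_v_L_1}. The constants depend only on $\texttt{data}_0$ through $c_3$.
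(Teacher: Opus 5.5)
Your proposal is correct and is the argument the paper intends. The paper omits this proof, saying only that it follows the proof of Corollary \ref{cor_comp_1} with $\delta_2(r)$ replaced by exactly the $\delta_4(r)$ you define; your steps (Young splitting, \eqref{est_V}, \eqref{energy_v} with \eqref{ineq_rev_jense}, Lemma \ref{lem_comp_2}, then inversion via \eqref{est_<1}--\eqref{est_>1} to get $\kappa_4=\kappa_3/(2s)$) match that, and your remark that the large-$\delta_4$ case is trivial by \eqref{energy_v_L_1} justifies merging the two inversion exponents $1/q$ and $1/p$ into one power times a constant.
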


\section{Proof of main results}
\setcounter{equation}{0}
\setcounter{theorem}{0}

In this section, we prove the main results of this paper concerning the partial H{\"o}lder continuity  as well as the partial H{\"o}lder continuity of the gradients to the minimizer functional \eqref{main_funl}.

\subsection{Partial H\"older regularity}

This subsection is devoted to proving that $u \in C^{0,\alpha}(\Omega_0, \mathbb{R}^N)$ for any $\alpha \in (0,1)$ to the local minimizers of \eqref{main_funl}, where $\Omega_0 \subset \Omega$ is an open subset of $\Omega$ with a lower dimensional measure. First of all, we show the Morrey-type estimate of $Du$ to the local minimizers by assuming sufficiently small excess energy.

\begin{lemma}\label{lem_morrey}
For ${\varphi_1}, {\varphi_2} \in \mathbf{\Phi}_{p,q}$ with ${\varphi_1} \prec  {\varphi_2}$, let $u \in W^{1,1}\big(\Omega, \mathbb{R}^N\big)$ be a local minimizer of the functional \eqref{main_funl} with assumption that the modulus of continuity of $\mathbb{A}(\cdot,\cdot)$ with \eqref{cond_A} and $a(\cdot)$ shown as in \eqref{cond_continuous} satisfies the gap condition \eqref{gap_1}. For given $\tau \in (0, 1)$, there exist an index $\delta_0 = \delta_0(\texttt{data}_0, \tau) > 0$ and a radius $R_1 = R_1(\texttt{data}_0, \tau) \in (0,1)$, such that for $B_{r} \ \Subset \Omega$ with $r \in (0, R_1)$ there holds
\begin{equation*}
\mathcal{E}(r) = \fint_{B_{r}} |u - \langle u \rangle_{B_{r}}| \, dx < \delta_0;
\end{equation*}
then we have the  Morrey-type decay estimate
\begin{equation}\label{morrey}
\int_{B_{\rho}} |Du| \, dx \leq c \,\bigg(\frac{\rho}{r}\bigg)^{n - \tau}\int_{B_{r}} |Du| \, dx
\end{equation}
for any $\rho \in (0, r]$, where $c = c(\texttt{data}_0, \tau) \geq 1$.
\end{lemma}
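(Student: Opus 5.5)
We argue by a standard comparison-and-iteration scheme built on Corollary \ref{cor_comp_1} and the Lipschitz bound \eqref{lip_v}. Fix $B_r\Subset\Omega$ with $\mathcal{E}(r)<\delta_0$, and let $v$ be the minimizer of the frozen functional \eqref{fun_v} on $B_{r/8}$. For $\theta\in(0,1/4)$, the estimate \eqref{lip_v} together with \eqref{energy_v_L_1} gives
\begin{equation*}
\int_{B_{\theta r/8}}|Dv|\,dx\le c\,\theta^{n}\int_{B_{r/8}}|Dv|\,dx\le c\,\theta^n\int_{B_{r/4}}|Du|\,dx .
\end{equation*}
Writing $Du=Dv+(Du-Dv)$ on $B_{\theta r/8}$ and applying \eqref{est_comp_1}, we obtain the one-step inequality
\begin{equation*}
\int_{B_{\theta r/8}}|Du|\,dx\le c_*\Big(\theta^{n}+\delta_2(r)\Big)\int_{B_{r}}|Du|\,dx,\qquad \delta_2(r):=c_2\Big(\omega_{\mathbb A}^{\mu}(r+\mathcal{E}(r))+\mathcal{G}(r)+r^{\frac{n\varepsilon^2}{1+\varepsilon}}\Big)^{\kappa_2}.
\end{equation*}
Next we choose parameters. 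Set $\vartheta=\theta/8$ and pick $\theta$ so small that $c_*\theta^{n}\le \frac12\vartheta^{\,n-\tau}$; this is possible since $\tau>0$, and it fixes $\theta=\theta(\texttt{data}_0,\tau)$. We then want $c_*\delta_2(\rho)\le\frac12\vartheta^{n-\tau}$ at every scale $\rho$ used in the iteration. The gap condition \eqref{gap_1} gives $\mathcal{G}(\rho)\to0$, and the other terms are small when $\rho$ and $\mathcal{E}(\rho)$ are small. We therefore choose $R_1$ with $\mathcal{G}(\rho)$ and $\rho^{n\varepsilon^2/(1+\varepsilon)}$ small for $\rho<R_1$ (also $R_1\le R_0$), and $\delta_0$ with $\omega_{\mathbb A}^\mu(2\delta_0')$ small, where $\delta_0'$ is a threshold on the excess slightly larger than $\delta_0$. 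Under these choices, one step gives $\int_{B_{\vartheta r}}|Du|\le\vartheta^{n-\tau}\int_{B_r}|Du|$, provided $\mathcal{E}(r)\le\delta_0'$.

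The main obstacle is propagating the smallness of the excess to the scales $\vartheta^k r$, since $\delta_2$ depends on $\mathcal{E}(\vartheta^k r)$ and not only on $\mathcal{E}(r)$. We plan to handle this by induction. Assume that $\int_{B_{\vartheta^j r}}|Du|\le\vartheta^{j(n-\tau)}\int_{B_r}|Du|$ and $\mathcal{E}(\vartheta^j r)\le\delta_0'$ for all $j\le k$. By Poincar\'e,
\begin{equation*}
\mathcal{E}(\vartheta^{k+1}r)\le c\,\vartheta^{k+1}r\fint_{B_{\vartheta^{k+1}r}}|Du|\,dx\le c\,\vartheta^{-n}\,\vartheta^{(k+1)(1-\tau)}\, r\fint_{B_r}|Du|\,dx .
\end{equation*}
The base quantity $r\fint_{B_r}|Du|$ is not directly controlled by $\mathcal{E}(r)$. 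To fix this, we first run the first step from $B_r$ down to $B_{r/4}$ using Lemma \ref{lem_cacc}, which gives $r\fint_{B_{r/4}}|Du|\le c_5\,\mathcal{E}(r)<c_5\delta_0$, and we start the iteration on $B_{r/4}$. Since $\vartheta^{1-\tau}<1$, the exponential factor decays, so choosing $\delta_0\le \delta_0'\vartheta^{n}/(c\,c_5)$ closes the induction and keeps $\mathcal{E}(\vartheta^{k}r)\le\delta_0'$ for every $k$; the dependence on $\tau$ enters only through $\vartheta$. The same Poincar\'e bound also controls $\mathcal{E}$ at the intermediate ball $B_{r/4}$.

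Finally, for arbitrary $\rho\in(0,r]$ we pick $k$ with $\vartheta^{k+1}r/4<\rho\le\vartheta^k r/4$, and trivially compare with $B_r$ for $\rho>r/4$. This gives
\begin{equation*}
\int_{B_\rho}|Du|\,dx\le\vartheta^{k(n-\tau)}\int_{B_{r/4}}|Du|\,dx\le c\,\vartheta^{-n}\Big(\frac{\rho}{r}\Big)^{n-\tau}\int_{B_r}|Du|\,dx,
\end{equation*}
which is \eqref{morrey} with $c=c(\texttt{data}_0,\tau)$.
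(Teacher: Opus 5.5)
Your argument is correct and follows the paper's scheme: a one-step decay from Corollary \ref{cor_comp_1} and the Lipschitz bound \eqref{lip_v}, with $\theta$ fixed first and $R_1,\delta_0$ then chosen via the gap condition \eqref{gap_1}, an induction keeping the excess small along the geometric sequence of radii, and a final interpolation between consecutive radii. The only difference is in how the excess bound is propagated: the paper applies the Caccioppoli inequality \eqref{cacc_L_1} at every scale to get $\mathcal{E}(\theta^{k}r)\le 4^n c_5c_6\,\theta^{1-\tau}\mathcal{E}(\theta^{k-1}r)\le\mathcal{E}(\theta^{k-1}r)$, which uses a single threshold $\delta_0$ but costs an extra smallness condition on $\theta$; you apply Lemma \ref{lem_cacc} once at the top scale and then combine Poincar\'e with the accumulated Morrey decay, which constrains $\theta$ only through the Lipschitz absorption but requires the auxiliary threshold $\delta_0'\gg\delta_0$ (so not ``slightly larger'' than $\delta_0$).
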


\begin{proof} Denote
\begin{equation}\label{def_theta}
\theta := \min\Bigg\{\frac{1}{32},\bigg(\frac{1}{8^n c_0c_7}\bigg)^\frac{1}{\tau}, \bigg(\frac{1}{4^n c_5 c_6}\bigg)^\frac{1}{1-\tau} \Bigg\},
\end{equation}
where $c_0, c_5, c_7$ are as in \eqref{lip_v} \eqref{cacc_L_1} \eqref{energy_v_L_1}, and $c_6$ is the constant from Poincar\'e inequality in \eqref{poincare}.

Note that $\omega_\mathbb{A}(\cdot)$ is the modulus of continuity of $\mathbb{A}(x,u)$ with $\omega_\mathbb{A}(0)=0$, and $\limsup_{r \to 0^+}\mathcal{G}(r) = 0$ by the gap condition \eqref{gap_1}. Then we can pick up $R_1 \in (0,R_0)$ and $\delta_0 > 0$  small enough that for any $r \in (0, R_1)$ it holds
\begin{equation}\label{R_2}
\varUpsilon(r, \delta_0) := c_2\,\Big(\omega_{\mathbb{A}}^{\,\mu}\big(r + \delta_0\big) + \mathcal{G}(r) + r^\frac{n\varepsilon^2}{1+\varepsilon}\Big)^{\kappa_2} \leq \frac{\theta^{n-\tau}}{10^{\,n}}.
\end{equation}
We are next to prove a decay estimate under assumption of the small excess energy $\mathcal{E}(r) < \delta_0$ for $r \in (0, R_1)$. It is readily checked that
\begin{eqnarray}\label{thrta_r}
\int_{B_{\theta r}} |Du| \, dx
&\leq&  \int_{B_{\theta r}} |Du - Dv| \, dx + \int_{B_{\theta r}} |Dv| \, dx\notag\\
&\leq& \int_{B_{r/32}} |Du - Dv| \, dx + |B_{\theta r}|\,\sup_{x \in B_{r/32}} |Dv|.
\end{eqnarray}
By the comparison estimate \eqref{est_comp_1} and \eqref{R_2} we get that
\begin{eqnarray}\label{u-v}
\int_{B_{r/32}} |Du - Dv| \, dx
&\le& \varUpsilon\big(r, \mathcal{E}(r)\big)\int_{B_{r/4}} |Du| \, dx \notag\\
&\leq&  \varUpsilon\big(r, \delta_0 \big)\int_{B_{r/4}} |Du| \, dx\notag\\
&\leq& \frac{\theta^{\,n-\tau}}{2}\int_{B_{r/4}} |Du| \, dx.
\end{eqnarray}
On the other hand, by the Lipschitz estimate  in \eqref{lip_v}, $L^1$-estimate  in \eqref{energy_v_L_1} and $\theta \leq \Big(\frac{1}{8^n c_7c_0}\Big)^\frac{1}{\tau}$ from \eqref{def_theta} it follows that
\begin{eqnarray}\label{sup_v}
|B_{\theta r}|\,\sup_{x \in B_{r/32}} |Dv|
&\leq& |B_{\theta r}|\,c_7\,\fint_{B_{r/8}} |Dv| \, dx \notag\\
&\leq& (\theta\,r)^{\,n}\,c_7\,c_0\,\fint_{B_{r/4}} |Du| \, dx \notag\\
&\leq& \frac{\theta^{\,n-\tau}}{2}\int_{B_{r/4}} |Du| \, dx.
\end{eqnarray}
Putting \eqref{u-v} and \eqref{sup_v} into \eqref{thrta_r} we get the following implication relation:
\begin{equation}\label{result_r}
\mathcal{E}(r) < \delta_0 \implies
\int_{B_{\theta r}} |Du| \, dx
\leq \theta^{\,n-\tau}\int_{B_{r/4}} |Du| \, dx
\quad \text{for any } r \in (0, R_1).
\end{equation}

Next, we claim that if $\mathcal{E}(r) < \delta_0$ for a fixed $r \in (0, R_1)$, then we get
\begin{equation}\label{est_excess}
\mathcal{E}(\theta^{k-1}r) = \fint_{B_{\theta^{k-1}r}} |u - \langle u \rangle_{B_{\theta^{k-1}r}}| \, dx < \delta_0
\quad
\text{for every } k \in \mathbb{N}.
\end{equation}
It is immediately observed that $\mathcal{E}(r) < \delta_0$ implies \eqref{est_excess} for $k = 1$. Assume by induction that $\mathcal{E}(\theta^{\,\bar{k}-1}r) < \delta_0$ for some $\bar{k} \in \mathbb{N}$. In the following, we show the validity of $\mathcal{E}(\theta^{\,\bar{k}}r) < \delta_0$. In accordance with \eqref{result_r} we see that $\int_{B_{\theta^{\,\bar{k}}r}} |Du| \, dx \leq \theta^{\,n-\tau}\int_{B_{\theta^{\bar{k}-1}r/4}} |Du| \, dx$ on the basis of induction hypothesis  $\mathcal{E}(\theta^{\,\bar{k}-1}r) < \delta_0$. This combines with the Poincar{\'e} inequality to deduce that
\begin{eqnarray*}
\mathcal{E}(\theta^{\,\bar{k}}r)
&\equiv & \fint_{B_{\theta^{\,\bar{k}}r}} |u - \langle u \rangle_{B_{\theta^{\,\bar{k}}r}}| \, dx\notag\\
&\leq&  c_6\,\theta^{\,\bar{k}}r \fint_{B_{\theta^{\,\bar{k}} r}} |Du| \, dx\notag\\
&\leq& c_6\,\Big(\theta^{\,\bar{k}}r\Big)^{1-n}\theta^{\,(n-\tau)}\int_{B_{\theta^{\bar{k}-1}r/4}} |Du| \, dx.
\end{eqnarray*}
We use the Caccioppoli inequality \eqref{cacc_L_1}, $\theta \leq \Big(\frac{1}{4^n c_6 c_5}\Big)^\frac{1}{1-\tau}$ from \eqref{def_theta} and $\mathcal{E}(\theta^{\,\bar{k}-1}r) < \delta_0$, to  get that
\begin{eqnarray*}
\mathcal{E}(\theta^{\,\bar{k}}r)
&\leq&  4^n\,c_6\,c_5\,\theta^{\,(1-\tau)}\fint_{B_{\theta^{\bar{k}-1}r}} |u - \langle u \rangle_{B_{\theta^{\bar{k}-1}r}}| \, dx \notag\\
&\leq&  4^n\,c_6\,c_5\,\bigg(\frac{1}{4^n c_6 c_5}\bigg)\,\mathcal{E}(\theta^{\,\bar{k}-1}r) < \delta_0,
\end{eqnarray*}
which completes the proof of Claim \eqref{est_excess}.
In the sequel, we make use of \eqref{result_r} to get
\begin{equation*}
\int_{B_{\theta^k r}} |Du| \, dx
\leq \theta^{n-\tau}\int_{B_{\theta^{k-1}r}} |Du| \, dx
\quad \text{for any } k \in \mathbb{N}.
\end{equation*}
We use the iteration approach again to deduce
\begin{eqnarray}\label{theta-k}
\int_{B_{\theta^k r}} |Du| \, dx
&\leq& \theta^{\,k(n-\tau)}\int_{B_{r}} |Du| \, dx
\quad \text{for any } k \in \mathbb{N}.
\end{eqnarray}

Finally, we prove the Morrey-type decay estimate of $Du$. For arbitrary $\rho \in (0,r)$, if $\rho \in (0, \theta r]$ then there is a unique $\bar{k} \in \mathbb{N}$ such that $\theta^{\,\bar{k}+1} r < \rho \leq \theta^{\,\bar{k}} r$. By \eqref{theta-k} we have that
\begin{eqnarray*}
\int_{B_{\rho}} |Du| \, dx
&\leq& \int_{B_{\theta^{\bar{k}} r}} |Du| \, dx
\leq \theta^{\,\bar{k}(n-\tau)}\int_{B_{r}} |Du| \, dx  \notag\\
&\leq& \theta^{-(n - \tau)}\theta^{\,(\bar{k} + 1)(n-\tau)}\int_{B_{r}} |Du| \, dx \notag\\
&\leq& c_\theta\, \bigg(\frac{\rho}{r}\bigg)^{\,n-\tau}\int_{B_{r}} |Du| \, dx
\end{eqnarray*}
for $c_\theta = \theta^{-(n - \tau)}> 0$. If $\rho \in (\theta r, r]$, by the standard calculation it yields that
\begin{eqnarray*}
\int_{B_{\rho}} |Du| \, dx
&=&
\bigg(\frac{r}{\rho}\bigg)^{\,n-\tau}\bigg(\frac{\rho}{r}\bigg)^{\,n-\tau}\int_{B_{\rho}} |Du| \, dx \notag\\
&\leq& \bigg(\frac{1}{\theta}\bigg)^{\,n-\tau}\bigg(\frac{\rho}{r}\bigg)^{\,n-\tau}\int_{B_{r}} |Du| \, dx \\
&=& c_\theta\, \bigg(\frac{\rho}{r}\bigg)^{\,n-\tau}\int_{B_{r}} |Du| \, dx.
\end{eqnarray*}
In summary, we combine $\rho \in (0, \theta r]$ and $\rho \in (\theta r, r]$, to get the Morrey-type decay estimate \eqref{morrey}.
\end{proof}

\begin{proof}[Proof of Theorem \ref{thm_1}]
With $\delta_0$ as in Lemma \ref{lem_morrey}, let us define an open subset $\Omega_0\subset \Omega$ as follows:
\begin{equation}\label{Omega_0}
\Omega_0:=\left\{
y \in \Omega:
\liminf _{r \rightarrow 0^{+}} \fint_{B_r(y)} |u - \langle u \rangle_{B_r(y)}| \,dx
<  \delta_0
\right\}.
\end{equation}
For any $y \in \Omega' \Subset \Omega_0$, we can take
$0 < r < \min\Big\{R_1, \frac{\mathrm{dist}(\Omega', \Omega)}{4}\Big\}$
such that
\begin{equation*}
\mathcal{E}(y, r) = \fint_{B_{r}(y)} |u - \langle u \rangle_{B_{r}(y)}| \, dx < \delta_0.
\end{equation*}
By the absolute continuity of integral with respect to the center $y$, then there exists a small  $0<\sigma\ll r$ such that for any $z \in B_{\sigma}(y)$, we have
\begin{equation*}
\mathcal{E}(z, r) = \fint_{B_{r}(z)} |u - \langle u \rangle_{B_{r}(z)}| \, dx < \delta_0.
\end{equation*}
According to Lemma \ref{lem_morrey}, we get the following Morrey-type decay estimate
\begin{equation}\label{morrey_z}
\int_{B_{\rho}(z)} |Du| \, dx \leq c \,\bigg(\frac{\rho}{r}\bigg)^{n - \tau}\int_{B_{r}(z)} |Du| \, dx
\end{equation}
for any $\rho \in (0, r]$ and $\tau \in (0, 1)$. This yields
$$
\frac{1}{\rho^{\,n - \tau}}\int_{B_{\rho}(z)} |Du| \, dx \leq  \frac{c}{r^{n - \tau}}\int_{B_{r}(z)} |Du| \, dx < \infty
$$
which indicates that $Du \in L^{1,\,n - \tau}(B_{\sigma}(y), \mathbb{R}^{N \times n})$ for any $\tau \in (0, 1)$.
Thanks to the Morrey embedding  (iii) of Lemma \ref{lem_isomorphism} we conclude  $u \in C^{0, 1-\tau}(B_{\sigma}(y), \mathbb{R}^N)$, that is to say, $u \in C^{0, 1-\tau}_{\mathrm{loc}}(\Omega_0, \mathbb{R}^N)$ for any $\tau \in (0, 1)$ by the arbitrariness of $y \in \Omega' \Subset \Omega_0$.

In the sequel, we are to estimate the Hausdorff dimension of the singular set $\Omega\setminus\Omega_0$. To this end, by the Poincar{\'e} inequality, the properties  \eqref{est_<1}\eqref{est_>1}, and the Jensen inequality of convex function $\varphi_1(\cdot)$, we deduce that for $r \in (0,1)$ there holds
\begin{eqnarray*}
\varphi_1\bigg(\fint_{B_r(y)} |u - \langle u \rangle_{B_r(y)}| \,dx\bigg)
&\leq& \varphi_1\bigg(c\,r \fint_{B_r(y)} \,|Du| \, dx\bigg)\notag\\
&\leq&  c\,r^{\,p} {\varphi_1}\bigg(\fint_{B_r(y)} |Du| \, dx\bigg) \notag\\
&\leq&  c\,r^{\,p} \fint_{B_r(y)} {\varphi_1}\,\big(|Du|\big) \, dx.
\end{eqnarray*}
Employing $\varphi_1(\cdot) \leq \varPsi(x, \cdot)$ and the H\"older inequality, one gets that
\begin{eqnarray*}
\varphi_1\bigg(\fint_{B_r(y)} |u - \langle u \rangle_{B_r(y)}| \,dx\bigg)
&\leq&  c\, r^{\,p}\fint_{B_r(y)} \varPsi\big(x, |Du|\big) \, dx\notag\\
&\leq&  c\, r^{\,p}\bigg(\fint_{B_r(y)} \Big(\varPsi\big(x, |Du|\big)\Big)^{1+\varepsilon_0}  \, dx\bigg)^{\frac{1}{1+\varepsilon_0}}\notag\\
&=& c\, \omega_n^{-\frac 1 {1+\varepsilon_0}}\bigg(r^{-n+p(1+\varepsilon_0)}\int_{B_r(y)} \Big(\varPsi\big(x, |Du|\big)\Big)^{1+\varepsilon_0} \, dx\bigg)^{\frac{1}{1+\varepsilon_0}}.
\end{eqnarray*}
We see that $ \fint_{B_r(y)} |u - \langle u \rangle_{B_r(y)}| \,dx \ge \delta_0 $ yields $\ \varphi_1\Big(\fint_{B_r(y)} |u - \langle u \rangle_{B_r(y)}| \,dx\Big) \ge \varphi_1(\delta_0)$ due to the non-decreasing nature of $\varphi_1(t)$. Therefore, we deduce
$$
c \,\omega_n^{-\frac 1 {1+\varepsilon_0}}\bigg(r^{-n+p(1+\varepsilon_0)}\int_{B_r(y)} \Big(\varPsi\big(x, |Du|\big)\Big)^{1+\varepsilon_0} \, dx\bigg)^{\frac{1}{1+\varepsilon_0}}\ge \varphi_1(\delta_0)
$$
or
$$
r^{-n+p(1+\varepsilon_0)}\int_{B_r(y)} \Big(\varPsi\big(x, |Du|\big)\Big)^{1+\varepsilon_0} \, dx \ge \delta^*.
$$
with $\delta^*=\frac{\omega_n}{c} \big(\varphi_1(\delta_0)\big)^{1+\varepsilon_0}$.
Hence, for the singular set $\Omega\setminus\Omega_0$ we have the following conclusion:
\begin{eqnarray*}
\Omega \setminus \Omega_0 &=& \Big\{y \in \Omega: \liminf _{r \rightarrow 0^{+}} \fint_{B_r(y)} |u - \langle u \rangle_{B_r(y)}| \,dx \geq \delta_0\Big\} \notag\\
&\subset& \Big\{y \in \Omega: \liminf _{r \rightarrow 0^{+}} r^{-n+p(1+\varepsilon_0)}\int_{B_r(y)} \Big(\varPsi\big(x, |Du|\big)\Big)^{1+\varepsilon_0} \, dx \geq \delta^*\Big\} \notag\\
&\subset& \Big\{y \in \Omega: \liminf _{r \rightarrow 0^{+}} \frac{1}{r^{n-p-p\varepsilon_0}}\int_{B_r(y)} \Big(\varPsi\big(x, |Du|\big)\Big)^{1+\varepsilon_0} \, dx > 0\Big\}.
\end{eqnarray*}
Since $\big(\varPsi(x, |Du|)\big)^{1+\varepsilon_0} \in L_\mathrm{loc}^1(\Omega)$ by \eqref{high_norm}, we use  Lemma \ref{lem_Hausdorff} to obtain
\begin{equation*}
\mathcal{H}^{n - p-\epsilon}\big(\Omega \setminus \Omega_0\big) = 0
\end{equation*}
with some $\epsilon: = p\varepsilon_0 \in (0, n-p]$.  This completes the proof of Theorem \ref{thm_1}.
\end{proof}

\begin{proof}[Proof of Theorem \ref{thm_2}]
In this way, a key ingredient is to make a modification  to the regular conditions of $\mathcal{G}(r)$ and $a(x)$; more precisely, we  use $\limsup_{r \to 0^+}\mathcal{G}(r) < \infty$ instead of $\limsup_{r \to 0^+}\mathcal{G}(r) = 0$,  and $a(x) \in C^{0,\, \beta}(\Omega)$ instead of $a(x) \in C(\Omega)$. Corresponding to the result of Corollary \ref{cor_comp_2}, we define
\begin{equation*}
\varUpsilon\big(r, \mathcal{E}(r)\big) := c_4\,\bigg(\omega_{\mathbb{A}}^{\,\mu}\big(r + \mathcal{E}(r)\big) + r^\lambda\Big(\mathcal{G}(r^\frac{1}{1+\varepsilon}) + \mathcal{G}(r) + 1\Big) \bigg)^{\kappa_4}
\end{equation*}
with $\lambda = \min\big\{\frac{\beta\varepsilon}{1+\varepsilon}, \frac{n\varepsilon(\varepsilon_0-\varepsilon)}{(1+\varepsilon)(1+\varepsilon_0)}\big\} > 0$.
Let us take $\theta$ with the same as in \eqref{def_theta}. Furthermore, by considering $\omega_\mathbb{A}(\cdot)$ as the modulus of continuity of $\mathbb{A}(x,u)$ with $\omega_\mathbb{A}(0)=0$ and $\limsup_{r \to 0^+}\mathcal{G}(r) < \infty$, then we can pick up $R_2 \in (0,R_0)$ and $\delta_0 > 0$ such that
\begin{equation*}
\varUpsilon(r, \delta_0)\leq \frac{\theta^{n-\tau}}{8^n} \quad \text{for any } r \in (0, R_2).
\end{equation*}
Therefore, we employ the same way as in the proof of Lemma \ref{lem_morrey} only with \eqref{est_comp_1} replaced by \eqref{est_comp_2} to conclude the Morrey-type estimate holds for sufficiently small excess energy. The rest of the proof follows the same lines as the proof of Theorem \ref{thm_1} to get $Du \in L^{1,\,n - \tau}(B_{\sigma}(y), \mathbb{R}^{N \times n})$ for any $\tau \in (0, 1)$, which implies $u \in C^{0, 1-\tau}_{\mathrm{loc}}(\Omega_0, \mathbb{R}^N)$ for any $\tau \in (0, 1)$ by the Morrey embedding (iii) of Lemma \ref{lem_isomorphism}.
\end{proof}

\subsection{Partial H\"older regularity of gradients}

Note that the conditions of Theorem \ref{thm_3} are stronger than those of Theorem \ref{thm_2}, then for $y \in \Omega' \Subset \Omega_0$ defined as \eqref{Omega_0}, we have $Du \in L^{1,\,n - \tau}(B_{\sigma}(y), \mathbb{R}^{N \times n})$ with  $0 < \sigma < \frac{\mathrm{dist}(\Omega', \Omega)}{4}$ for any $\tau \in (0, 1)$. In other words, it holds that
\begin{equation}\label{morrey_2}
\int_{B_{r}(z)} |Du| \, dx \leq c \,r^{\,n - \tau}
\end{equation}
for any $z \in B_{\sigma}(y)$ and $r \in (0, \frac{\mathrm{dist}(\Omega', \Omega)}{8})$.
With \eqref{morrey_2} in hand, we are in a position to prove Theorem \ref{thm_3}.

\begin{proof}[Proof of Theorem \ref{thm_3}]
For fixed $0 < r < \min\big\{R_0, \frac{\mathrm{dist}(\Omega', \Omega)}{8}\big\}$, let $v \in W^{1,1}\big(B_{r/8}, \mathbb{R}^N\big)$ be the minimizer of the localized functional \eqref{fun_v}.
By the Poincar{\'e} inequality and \eqref{morrey_2} we see that
\begin{equation*}
\mathcal{E}(z, r) \equiv  \fint_{B_{r}(z)} |u - \langle u \rangle_{B_{r}}| \, dx \leq c\,r\,\fint_{B_{r}(z)} |Du| \, dx \leq c \,r^{\,1 - \tau}.
\end{equation*}
By the gap condition \eqref{R1} we see  $\mathcal{G}(r^\frac{1}{1+\varepsilon}) + \mathcal{G}(r)\le 2K$. Let us use Corollary \ref{cor_comp_2} with $\omega_\mathbb{A}(r) = r^{\,\beta} $, $\mathcal{E}(z, r) \leq  c \,r^{\,1 - \tau}$ to obtain that
\begin{eqnarray}\label{Du-Dv}
\fint_{B_{r/8}(z)} \big|Du - Dv\big|\, dx
&\leq& c\,\Big(\omega_{\mathbb{A}}^{\,\mu}\big(r + \mathcal{E}(z, r)\big) + r^\lambda \big(\mathcal{G}(r^\frac{1}{1+\varepsilon}) + \mathcal{G}(r) + 1\big) \Big)^{\kappa_4}\fint_{B_r(z)} |Du|\, dx \notag\\
&\leq& c\,\Big(r^{\beta\mu (1-\tau)} + r^\lambda \Big)^{\kappa_4}\fint_{B_r(z)} |Du|\, dx \notag\\
&\leq& c\,r^{\,\gamma'}\fint_{B_r(z)} |Du| \, dx
\end{eqnarray}
with an index $\gamma' = \kappa_4\min\big\{\beta\mu(1-\tau),\lambda\big\}  \in (0,1)$, which combines \eqref{Du-Dv} with \eqref{morrey_2} to show
\begin{equation}\label{est_comp_grad}
\fint_{B_{r/8}(z)} \big|Du - Dv\big|\, dx \leq c \,r^{\,\gamma' - \tau}.
\end{equation}

Let us now pick up an $R_3 > 0$ such that
\begin{equation*}
R_3 < \min \bigg\{R_0,\, \bigg(\frac{1}{8}\bigg)^{\frac{2n}{\gamma'}+1}, \frac{\mathrm{dist}(\Omega', \Omega)}{8}\bigg\}.
\end{equation*}
For any $\rho \in \big(0, R_3^\frac{2n + \gamma'}{2n}\big)$, we write $\rho^* = \rho^\frac{2n}{2n + \gamma'} \in \big(0, \min\big\{R_0, \frac{\mathrm{dist}(\Omega', \Omega)}{8}\big\}\big)$. Let $v \in u + W_0^{1,1}\big(B_{\rho^*/8}, \mathbb{R}^N\big)$ be a minimizer of the local frozen functional $ \int_{B_{\rho^*/8}} \bar{\varPsi}_{\rho^*}(|Dv|_{\bar{\mathbb{A}}_{\rho^*}^u}) \, dx$. It is checked that for $0<\rho <\rho^* < R_3$,
\begin{equation*}
\rho = \rho^\frac{2n}{2n + \gamma'} \rho^{1 - \frac{2n}{2n + \gamma'}} < \rho^*R_3^{1 - \frac{2n}{2n + \gamma'}} \leq \rho^*\bigg(\frac{1}{8}\bigg)^{\big(1 - \frac{2n}{2n + \gamma'}\big)\,\big(\frac{2n}{\gamma'}+1\big)} =  \frac{\rho^*}{8},
\end{equation*}
which means $B_\rho(z)\subset B_{\frac{\rho^*}{8}}(z)$. By the Campanato estimate of $Dv$  shown in \eqref{est_camp_v} and the comparison estimate \eqref{est_comp_grad}, we obtain that
\begin{eqnarray*}
\fint_{B_{\rho}(z)} |Du - \langle Du \rangle_{B_{\rho}}| \, dx
&\leq&  2\fint_{B_{\rho}(z)} |Du - \langle Dv \rangle_{B_{\rho}}| \, dx\notag\\
&\leq& 2\fint_{B_{\rho}(z)} |Du - Dv| \, dx + 2\fint_{B_{\rho}(z)} |Dv - \langle Dv \rangle_{B_{\rho}}| \, dx\notag\\
&\leq& c\,\bigg(\frac{\rho^*}{\rho}\bigg)^n\fint_{B_{\rho^*/8}(z)} |Du - Dv| \, dx + c\,\bigg(\frac{\rho}{\rho^*}\bigg)^{\,\sigma_1}\fint_{B_{\rho^*/8}(z)} |Dv| \, dx\notag\\
&\leq& c\,\bigg(\frac{\rho^*}{\rho}\bigg)^n(\rho^*)^{\,\gamma' - \tau}\, + c\,\bigg(\frac{\rho}{\rho^*}\bigg)^{\,\sigma_1}\fint_{B_{\rho^*/8}(z)} |Dv| \, dx.
\end{eqnarray*}
Since \eqref{energy_v_L_1} and \eqref{morrey_2} are in effect, one has
\begin{equation*}
\fint_{B_{\rho^*/8}(z)} |Dv| \, dx
\leq  c \fint_{B_{\rho^*/4}(z)} |Du| \, dx
\leq c \,(\rho^*)^{\,-\tau}.
\end{equation*}
Then it follows from $\rho^* = \rho^{\frac{2n}{2n + \gamma'}}$ that
\begin{eqnarray*}
\fint_{B_{\rho}(z)} |Du - \langle Du \rangle_{B_{\rho}}| \, dx
&\leq & c\,\Bigg((\rho^*)^{\,\gamma' - \tau}\,\bigg(\frac{\rho^*}{\rho}\bigg)^n + (\rho^*)^{\,-\tau}\bigg(\frac{\rho}{\rho^*}\bigg)^{\,\sigma_1}\Bigg)\\
&=&  c\,\bigg(\rho^{\frac{\gamma'(n-\sigma_1)}{2n+\gamma'}} + 1\bigg)\,\rho^{\frac{\gamma'\sigma_1-2n\tau}{2n+\gamma'}}\\
&\le & c\, \rho^{\frac{\gamma'\sigma_1-2n\tau}{2n+\gamma'}},
\end{eqnarray*}
where we used $\rho^{\frac{\gamma'(n-\sigma_1)}{2n+\gamma'}}\le 1$ due to $0<\rho<R_3<1$ and $n-\sigma_1>0$ in the last step.
By considering any $\tau \in (0,1)$, we only confine  $\tau \in (0, \frac {\gamma'\sigma_1}{2n})$ to ensure $\gamma:=\frac{\gamma'\sigma_1-2n\tau}{2n+\gamma'}>0$. Then we have
\begin{equation*}
\fint_{B_{\rho}(z)} |Du - \langle Du \rangle_{B_{\rho}}| \, dx
\leq \,c\,\rho^{\,\gamma} \quad \forall z \in B_{\sigma}(y).
\end{equation*}
This means that $Du \in \mathcal{L}^{1,\, n + \gamma}(B_{\sigma}(y), \mathbb{R}^{N \times n})$. On the basis of an equivalence of Campanato space and H\"{o}lder space shown in  Lemma \ref{lem_isomorphism} (ii), we see that $Du \in C^{0, \gamma}(B_{\sigma}(y), \mathbb{R}^{N \times n})$, then $Du \in C^{0, \gamma}_{\mathrm{loc}}(\Omega_0, \mathbb{R}^{N \times n})$ by the arbitrariness of $y \in \Omega' \Subset \Omega_0$. This proof is complete.
\end{proof}

\noindent\textbf{Acknowledgement}\ \
This paper was supported by National Natural Science Foundation of China grant No.\ 12071021 and No. 12271021.

\vspace{7pt}
\noindent\textbf{Data Availability}\ \
No data was used for the research described in the article.

\section*{Declarations}
\noindent\textbf{Conflicts of Interest}\ \
We declare that we have no conflict of interest.

\end{document}